\newtheorem{definition}{Definition}[section]
\newtheorem{proposition}[definition]{Proposition}
\newtheorem{propositionand}[definition]{Proposition and Definition}
\newtheorem{theoremand}[definition]{Theorem and Definition}
\newtheorem{lemma}[definition]{Lemma}
\newtheorem{theorem}[definition]{Theorem}
\newtheorem{corollary}[definition]{Corollary}
\newtheorem{example}[definition]{Example}
\title{Discrete channel surfaces}
\author{Udo Hertrich-Jeromin, Wayne Rossman and Gudrun Szewieczek}
\begin{document}
\maketitle
\begin{center}
\begin{minipage}{11cm}\small
\textbf{Abstract.} We present a definition of discrete channel surfaces in Lie 
sphere geometry, which reflects several properties for smooth 
channel surfaces.  Various sets of data, defined at vertices,
on edges or on faces, are associated with a discrete channel
surface that may be used to reconstruct the underlying particular
discrete Legendre map.
As an application we investigate isothermic discrete channel 
surfaces and prove a discrete version of Vessiot's Theorem.
\end{minipage}
\vspace*{0.5cm}\\\begin{minipage}{11cm}\small
\textbf{MSC 2010.} 53C42, 53A40, 37K35, 37K25 
\end{minipage}
\vspace*{0.5cm}\\\begin{minipage}{11cm}\small
\textbf{Keywords.} channel surface; Dupin cyclide; Lie sphere geometry; discrete Legendre map; Lie cyclide; blending surface; Vessiot's Theorem; discrete isothermic net; Ribaucour transformation.
\end{minipage}
\end{center}
%
%
\  \\\section{Introduction}
\noindent From a perspective of higher geometries, channel surfaces form a class of
simple surfaces,
as they can be considered as curves in a suitable space of geometric
objects, e.g.~a space of spheres or a space of Dupin cyclides.  As such, they are well
suited for
applications in engineering or computer aided geometric design, as they
can be
described in a simpler way than more general surfaces.  On the other hand,
they are
of interest from a theoretical point of view, since, for example, they
provide a class
to draw simple but non-trivial examples for complex problems -- in
particular,
when invariance under M\"{o}bius or Lie sphere transformations is sought.

In this paper, we propose a discretization of channel surfaces and, as a
main theorem
and application of our discretization, we prove a discrete version of
Vessiot's
Theorem \cite{vessiot}.

A satisfactory discretization of the notion of a channel surface turns out
to be
surprisingly intricate: due to the various characterizations and geometric
properties
of channel surfaces that we aim to reflect in the discretization, a direct
and
naive approach has proven unable to preserve the desired mathematical
structures.
The natural context of these structures is varied, and we will work
simultaneously
in different geometries: M\"{o}bius geometry, Lie sphere geometry, as well as
Euclidean
geometry.  The various sets of data associated to a channel surface depend
on the
respective setting.  In particular, we find that a geometric object may not
be defined at just one type of cells in the underlying cell complex, but be
distributed across cells of different dimensions.

This approach is based not only on the analysis of enveloped sphere
congruences,
but also on enveloped congruences of Dupin cyclides, cf.~\cite{paper_cyclidic}. 
In particular,
we use a notion of ``Lie cyclide'' for discrete channel surfaces.
Also, the approach we propose carries directly over to a semi-discrete
setting.
\\\textbf{Acknowledgements.} We would like to thank Mason Pember and Joseph Cho for fruitful and enjoyable discussions around this topic. For the main parts of this work the third author was funded by her JSPS Postdoctoral Fellowship P17734. Moreover, this work has been partially supported by the FWF/JSPS Joint Project grant I1671-
N26 ``Transformations and Singularities'' and also by the grants Kiban S (17H06127) and
Kiban C (15K04845) from the Japan Society for the Promotion of
Science.

\subsection{Smooth channel surfaces and problems with the discretization}
Smooth channel surfaces, first introduced by Monge \cite{M1850}, can be characterized  by various equivalent properties \cite{blaschke}: a channel surface is
\vspace*{0.1cm}\begin{itemize}
\item[(S1)] an envelope of a 1-parameter family of spheres,
\item[(S2)] a surface with one family of circular curvature lines,
\item[(S3)] a surface such that one of its principal curvatures is constant along its curvature direction.
\end{itemize}
The equivalence of these properties mainly relies on a fact which follows from  Joachimsthal's Theorem: if a surface is the envelope of a 1-parameter family of spheres, then it envelopes along curvature lines, which are therefore circles.
%
%
\\\\We start by investigating the interplay of the conditions (S1)--(S3) for discrete nets. It will turn out that there are significant differences between the smooth and the discrete theories and, therefore, a naive definition of a discrete channel surface using a discrete version of just one of the smooth properties is unsatisfactory.
%
%
\\\\Let us consider discretizations of surfaces parametrized by curvature line coordinates, namely \textit{principal contact element nets} (cf.~\cite[\S 3.5]{bobenko_book}). These are circular nets equipped with normals that represent ``contact elements'', that is, for each vertex the sphere pencil consisting of spheres with this normal that also contain the vertex.  Moreover, two adjacent normals fulfill the \textit{Legendre condition}, that is, two adjacent contact elements share a common sphere, called the \emph{curvature sphere} for the related edge.  

Recall that a principal contact element net $f:\mathbb{Z}^2 \rightarrow \{ \text{contact elements} \}$ \textit{envelopes} a sphere congruence $s:\mathbb{Z}^2 \rightarrow \{ \text{oriented spheres} \}$ if $s_p \in f_p$ for any $p \in \mathbb{Z}^2$. 
\\\\Hence, from the definition we immediately deduce that the properties (S1) and (S3) are equivalent also in the discrete setting: if $f$ envelopes a 1-parameter family of spheres, they are curvature spheres and therefore the principal curvature along the corresponding curvature line is constant. Conversely, curvature spheres which are constant along one family of curvature lines determine a 1-parameter family of enveloped spheres.
\\\\The following construction yields examples of principal contact element nets, which will further help to illustrate the situation in the discrete setting: let $c_1:\mathbb{Z} \rightarrow \mathbb{R}^3$ be an arbitrary discrete curve equipped with normals $n_1$ fulfilling the Legendre condition. Reflecting the curve and these normals in an arbitrary plane gives a second curve $c_2$ with normals $n_2$ that again satisfy the Legendre condition. Hence, the sequence $((c_1,n_1), (c_2,n_2))$ determines a coordinate ribbon of a principal contact element net. In this way, by repeated reflection in a sequence $(P_i)_{i \in I\subseteq \mathbb{Z}}$ of planes, we obtain a principal contact element net $f:=(c_i, n_i )_{i \in I}$.
\\\\\textbf{Example 1.} Let $c_1$ be a discrete circular curve with normals that interesect at a common point, hence the contact elements along this curve share a common curvature sphere. For any choice of planes $(P_i)_{i \in I}$, we then obtain a principal contact element net, which satisfies the properties (S1)--(S3) (cf.~Figure 1, left).
\\\\\textbf{Example 2.} Let $c_1$ be an arbitrary discrete curve on a sphere with normals $n_1$ going through the center of this sphere and $(P_i)_{i \in I}$ be a sequence of non-parallel planes. Then $f:=(c_i, n_i )_{i \in I}$ yields a discrete principal contact element net, which envelopes a 1-parameter family of spheres, hence satisfies (S1) and (S3). But, if the initial curve $c_1$ is not circular, property (S2) obviously does not hold (cf.~Figure 1, right).
\\\\\begin{minipage}{5cm}
\includegraphics[scale=0.7]{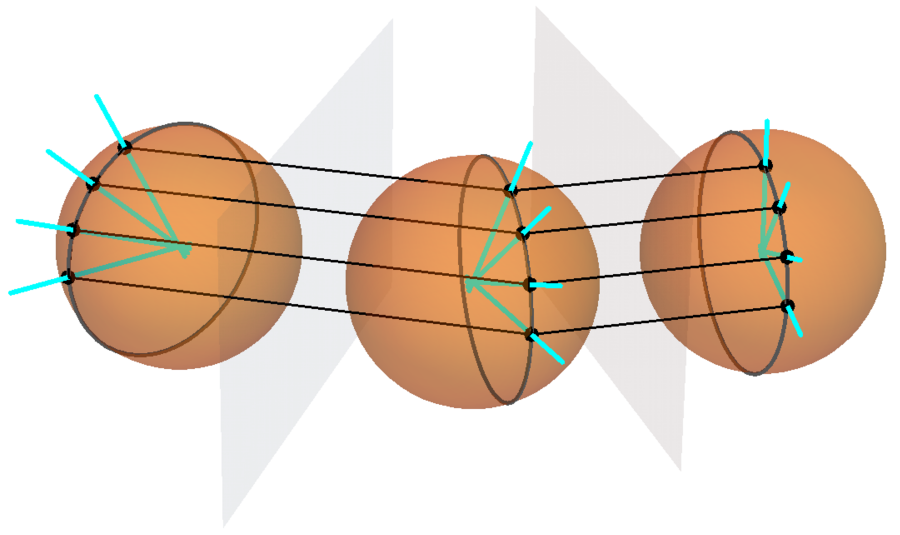}
\end{minipage}
\begin{minipage}{1.9cm}
\ \ \ \ \ \ \ 
\end{minipage}
\begin{minipage}{5cm}
\vspace*{-0.4cm}\hspace*{-0.2cm}\includegraphics[scale=0.74]{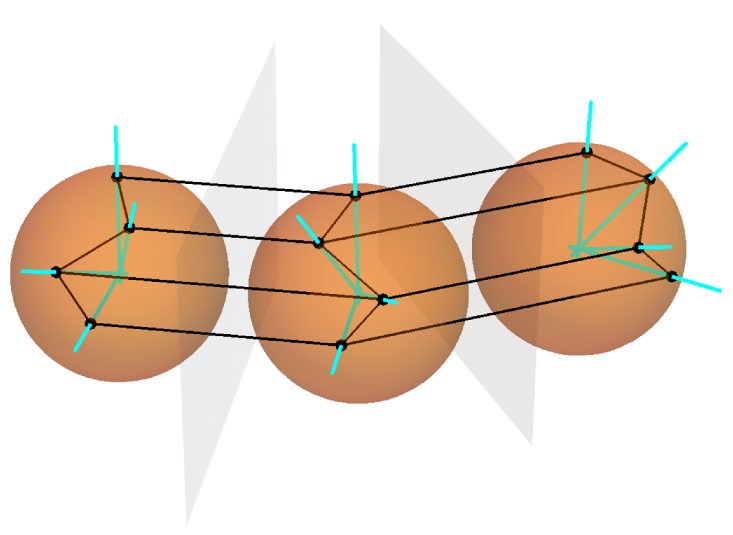}
\end{minipage}
%
%
\\[12pt] \ \\\begin{minipage}{0.5cm}
\ \ 
\end{minipage}
\begin{minipage}{11cm}
\textbf{Fig 1.} \textit{Left:} A principal contact element net, which satisfies properties (S1)-(S3) and therefore qualifies as a ``discrete channel surface''. \textit{Right:} A discrete Legendre map enveloping a 1-parameter family of spheres without circular curvature lines.
\end{minipage}
\begin{minipage}{0.5cm}
\ \ 
\end{minipage}
%
%
%
%
\\\\[6pt]\textbf{Example 3.} Let $c_1$ be a discrete circular curve with normals $n_1$ that do not intersect at the same point and $(P_i)_{i \in I}$ be a sequence of parallel planes. Then the principal contact element net $f:=(c_i, n_i)_{i \in I}$ has a family of circular curvature lines, but does not envelop a 1-parameter family of spheres. Thus, this construction provides examples which contradict the properties (S1) and (S3), while (S2) is fulfilled.
%
%
\\\\In the realm of Lie sphere geometry, Blaschke gave another characterization of smooth channel surfaces using the Lie cyclides of a surface, special Dupin cyclides which make maximal-order contact with the surface along its curvature directions (cf.~\cite{blaschke, trafo_channel}): a surface in Lie sphere geometry is a channel surface if and only if 
\vspace*{0.1cm}  
\begin{itemize}
\item[(S4)] its Lie cyclide congruence is constant along one of its curvature directions.
\end{itemize}
\vspace*{0.1cm}  
In the case of a channel surface the curvature spheres along each circular curvature direction coincide with those of the corresponding constant Lie cyclide. 

This characterization inspires our definition for discrete channel surfaces, which will satisfy discrete versions of the properties (S1)--(S3) for smooth channel surfaces. 
%
%
\subsection{Preliminaries}
\noindent Throughout this paper we will work in the realm of Lie sphere geometry and will use the hexaspherical coordinate model introduced by Lie. In this subsection we briefly summarize the basic principles of this model and fix notations. For more details about the theoretic background, see for example \cite{book_cecil}. 
\\In order to use Lie's model of Lie sphere geometry, we shall consider the vector space $\mathbb{R}^{4,2}$ endowed with a metric $(.,.)$ of signature $(4,2)$. The linear span of vectors will be denoted by $\langle .,...,.\rangle$. 

In this setting the Lie quadric (projective light cone)
\begin{equation*}
\mathcal{L}:=\{ \langle \eta \rangle \subset \mathbb{R}^{4,2} | \ (\eta, \eta)=0  \} \subset \mathbb{P}(\mathbb{R}^{4,2})
\end{equation*}
represents the set of oriented 2-spheres and two 2-spheres are in oriented contact if and only if any corresponding vectors in the light cone are orthogonal. Therefore lines in $\mathcal{L}$ correspond to pencils of 2-spheres which are all in oriented contact to each other, hence parametrise ``contact elements''. This Grassmannian of null 2-planes in $\mathbb{R}^{4,2}$ will be denoted by 
\begin{equation*}
\mathcal{Z}:= \{ \langle \eta, \xi \rangle \subset \mathbb{R}^{4,2} \ | \  (\eta, \xi)=0 \} \subset G_2(\mathbb{R}^{4,2}).
\end{equation*}
The group of Lie sphere transformations preserves oriented contact between spheres, hence, also contact elements. However, parallel transformations are Lie transformations in Euclidean space, hence points (viewed as spheres with radius zero) and spheres are not distinguishable.

We observe how M\"obius geometry arises as a subgeometry in this model. After the choice of a point sphere complex $\mathfrak{p} \in \mathbb{R}^{4,2}$, $(\mathfrak{p},\mathfrak{p})\neq 0$, that is, after we have decided which spheres have radius zero and are therefore ``points'' (called point spheres), we obtain a M\"obius geometry modeled on $\langle \mathfrak{p} \rangle ^\perp$. Namely, those Lie sphere  transformations that fix the point sphere complex $\mathfrak{p}$ are exactly the conformal transformations of $\langle \mathfrak{p} \rangle^\perp$.
\\\\Furthermore, if additionally another vector $\mathfrak{q} \in \mathbb{R}^{4,2} \setminus \{ 0 \}$ is fixed, a Riemannian or Lorentzian space form geometry can also be treated as a subgeometry of Lie sphere geometry. To this end, we consider the space form 
\begin{equation*}
\mathcal{Q}^3:=  \{ \eta \in \mathbb{R}^{4,2} \ | \ (\eta, \eta)=0, \ (\eta, \mathfrak{q})= -1, \ (\eta, \mathfrak{p})=0  \},
\end{equation*} 
a 3-dimensional quadric of constant sectional curvature $-(\mathfrak{q},\mathfrak{q})$.
%
%
%
%
%
\subsection{Discrete Legendre maps}
Discrete surfaces studied in this paper are represented by discrete Legendre maps from a connected quadrilateral cell complex $\mathcal{G}=(\mathcal{V},\mathcal{E},\mathcal{F})$ to the space of contact elements $\mathcal{Z}$. The set of vertices (0-cells), edges (1-cells) and faces (2-cells) of the cell complex $\mathcal{G}$ are denoted by $\mathcal{V}$, $\mathcal{E}$ and $\mathcal{F}$. Moreover, we assume that interior vertices are of even degree, that is, the number of edges $e_i \in \mathcal{E}$ incident to a vertex $v \in \mathcal{V}$ is even. 

Then there exists a consistent labelling of the edges $\mathcal{E}$ of $\mathcal{G}$ such that two opposite edges of a face are labelled with '$+$' and the other two edges of the face are labelled with '$-$'. In this way, we obtain a partition $\mathcal{E}^+  \overset{.}{\cup} \mathcal{E}^-=\mathcal{E}$ of the set of edges. 
\\\\Paths along edges with the same label '$\pm$' will be called \emph{$\pm$-coordinate lines}, while sequences of faces consecutively sharing common  edges with the same edge-label '$\mp$' are said to be \emph{$\pm$-coordinate ribbons}. 
\\\\\begin{minipage}{5cm}
\hspace*{3cm}\includegraphics[scale=0.2]{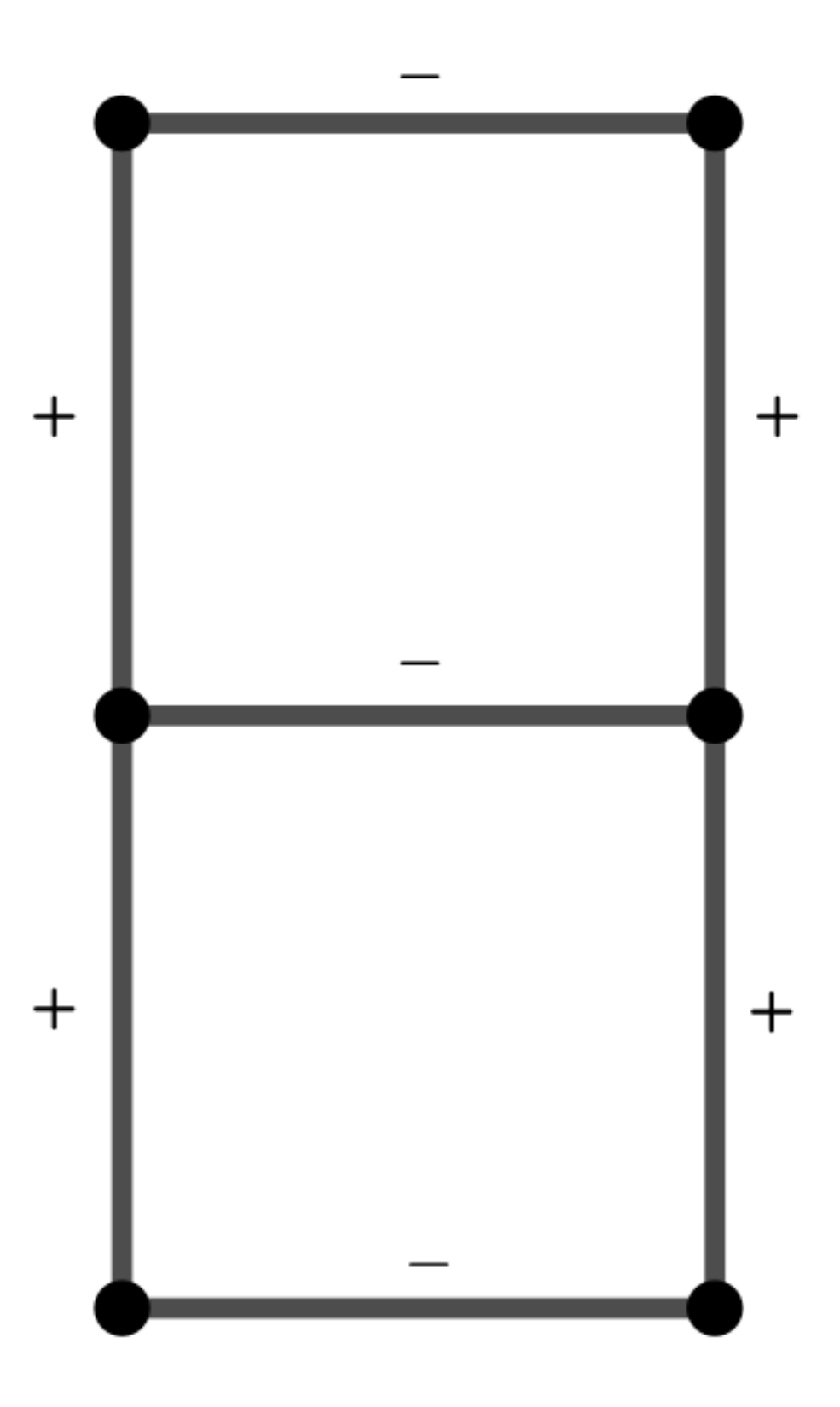}
\end{minipage}
\begin{minipage}{0.8cm}
\ \ 
\end{minipage}
\begin{minipage}{5cm}
\includegraphics[scale=0.4]{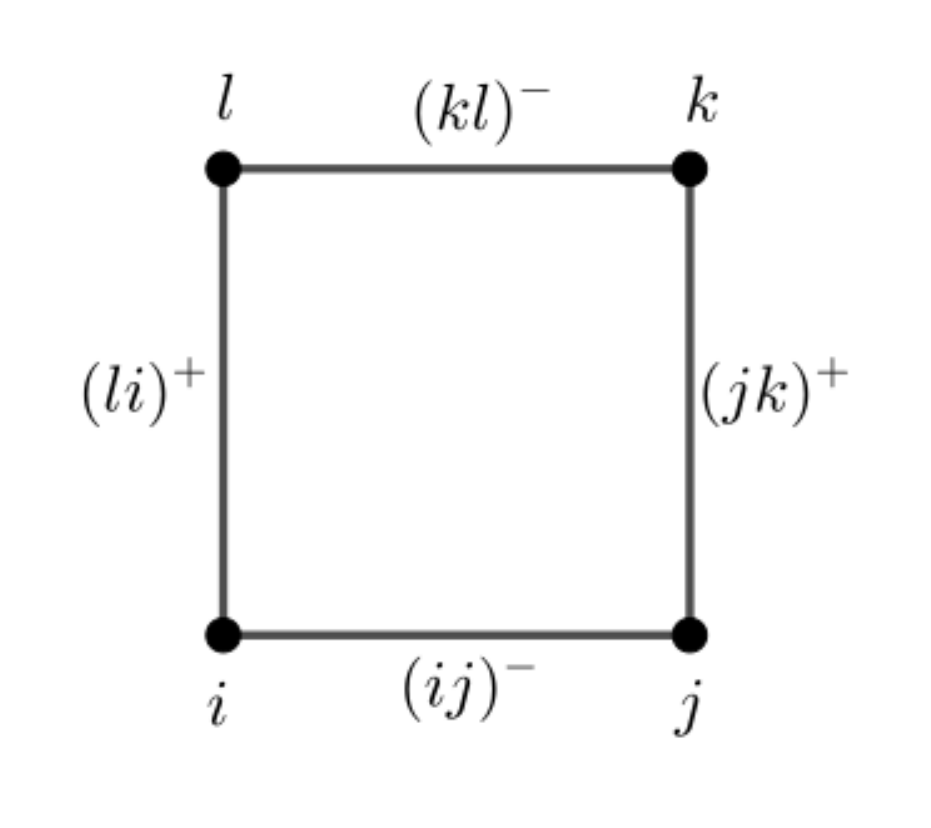}
\end{minipage}
\\[10pt] \ \\\begin{minipage}{1.2cm}
\ \ 
\end{minipage}
\begin{minipage}{10cm}
\textbf{Fig 2.} \textit{Left:} A (part of a) '$+$'-coordinate ribbon bounded by two '$+$'-coordinate lines. \textit{Right:} The notation used for a face $(ijkl)$.
\end{minipage}
\begin{minipage}{0.7cm}
\ \ 
\end{minipage}
%
%
\\\\[6pt]In the next paragraphs we recall the notion of discrete Legendre maps, which are particular discrete line congruences ``living'' on 0-cells (vertices) of a cell complex and which represent discrete surfaces in Lie sphere geometry. As a novel point of view, we discuss how discrete Legendre maps can be described by alternative data prescribed on  1- or 2-cells of the underlying cell complex.
\begin{definition}[\cite{bobenko_book, lin_weingarten_discrete1}]
A \emph{discrete Legendre map} is a line congruence $f: \mathcal{V} \rightarrow \mathcal{Z}, i \mapsto f_i$ so that two adjacent contact elements $f_i$ and $f_j$ share a common \emph{curvature sphere} $s_{ij}:= f_i \cap f_j$.
\end{definition}
\noindent Note that, for any discrete Legendre map, we therefore obtain two \textit{curvature sphere congruences} $s^+:\mathcal{E}^+ \rightarrow \mathcal{L}$ and $s^-:\mathcal{E}^- \rightarrow\mathcal{L}$.

Conversely, by prescribing suitable curvature sphere congruences on the edges of a quadrilateral cell complex, we can recover the discrete Legendre map:
\begin{proposition}\label{prop_leg_from_spheres}
A map $s:\mathcal{E} \rightarrow \mathcal{L}$ induces a discrete Legendre map with curvature sphere congruences $s\vert_{\mathcal{E}^+}$ and $s\vert_{\mathcal{E}^-}$ if and only if the spheres corresponding to a vertex-star span a contact element.
\end{proposition}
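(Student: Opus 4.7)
The plan is to unpack both the definition of a discrete Legendre map and the contact-element structure in Lie sphere geometry, then verify the two implications by tracking how null lines sit inside null 2-planes. Recall that a contact element is, by definition, a 2-dimensional null subspace of $\mathbb{R}^{4,2}$, so the statement really concerns when a collection of null lines at each vertex spans such a 2-plane.

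For the ``only if'' direction, I would start from a discrete Legendre map $f$ with $s_{ij} = f_i \cap f_j$. Since each $s_{ij}$ is a null line contained in the null 2-plane $f_i$, the span of all curvature spheres incident to $i$ is automatically contained in $f_i$. To upgrade this inclusion to an equality I would use that any interior vertex carries at least one $+$-edge and one $-$-edge, and that the curvature spheres on two distinct edges issuing from $i$ must be distinct lines; otherwise the intersection $f_i \cap f_j$ would contain two independent lines, hence equal $f_i$, contradicting that $f_i \cap f_j = s_{ij}$ is a single sphere. Thus the span is exactly $2$-dimensional and coincides with the contact element $f_i$.

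For the converse, I would simply \emph{define} $f_i$ to be the subspace spanned by the spheres of the vertex-star at $i$; by hypothesis this span lies in $\mathcal{Z}$, so $f_i$ is a contact element. For any edge $ij$, the null line $s_{ij}$ lies in both $f_i$ and $f_j$ by construction, so $s_{ij} \subseteq f_i \cap f_j$. Since $f_i$ and $f_j$ are distinct $2$-planes meeting in at least the line $s_{ij}$, their intersection is exactly $s_{ij}$, and the Legendre condition $s_{ij} = f_i \cap f_j$ holds on every edge.

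The main obstacle is bookkeeping rather than geometry: the phrase ``spans a contact element'' simultaneously enforces that the spheres at a vertex-star are pairwise in oriented contact (the null condition, via the Lie quadric criterion that $(\eta,\xi)=0$ corresponds to oriented contact) and that they span a genuine $2$-plane rather than collapsing to a single line. I expect the subtle step to be the dimension count at a vertex: one must ensure that a $+$- and a $-$-curvature sphere meeting at a vertex are distinct as lines in $\mathbb{R}^{4,2}$, which is the point where the $\pm$-labelling induced by the even-degree assumption on $\mathcal{G}$ is used.
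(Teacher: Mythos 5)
The paper states Proposition~\ref{prop_leg_from_spheres} without proof, so there is no argument of the authors' to compare against; judged on its own merits, your ``if'' direction is essentially sound but your ``only if'' direction contains a genuine error. Defining $f_i$ as the span of the vertex-star spheres and observing that $s_{ij}\subseteq f_i\cap f_j$ forces $f_i\cap f_j=s_{ij}$ once $f_i\neq f_j$ is the right construction; you do assert $f_i\neq f_j$ without justification (if two adjacent vertex-stars spanned the same null $2$-plane the construction would degenerate), but that non-degeneracy is already implicit in the paper's definition of $s_{ij}:=f_i\cap f_j$ as a single sphere, so a remark would suffice there.

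The real problem is your claim that ``the curvature spheres on two distinct edges issuing from $i$ must be distinct lines.'' This is false, and the paper itself proves it false: Proposition~\ref{1family_curv} shows that for a discrete channel surface the curvature spheres on the two circular-direction edges $(i'i)$ and $(ii'')$ meeting at a common vertex \emph{coincide}. Your justification is moreover a non-sequitur: if $s_{ij}=s_{ik}$ for two distinct edges at $i$, this only says that the single line $f_i\cap f_j$ equals the single line $f_i\cap f_k$; it does not place two independent lines inside $f_i\cap f_j$, so no contradiction arises. What you actually need is weaker: every vertex lies on a face whose two edges at that vertex carry opposite labels, so the vertex-star contains one $+$-curvature sphere and one $-$-curvature sphere, and it is the distinctness of \emph{these two} that makes the span all of the $2$-plane $f_i$ rather than a line. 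Even that distinctness is not a formal consequence of the paper's definition of a discrete Legendre map (a degenerate congruence all of whose contact elements contain one fixed sphere satisfies it, with every vertex-star spanning only a line); it requires either an explicit non-degeneracy hypothesis or reading ``induces'' as ``determines,'' i.e.\ that $f_i$ is recovered as the span. You should replace the pairwise-distinctness claim by this $\pm$ argument and state the non-degeneracy assumption explicitly.
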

%
%
\noindent To carry the concept of prescribed data for a discrete Legendre map over to faces, we take up the idea of gluing patches of smooth surfaces into discrete surfaces (cf.\,\cite{bo-2011-cas,paper_cyclidic} and also \cite{rib_coord} for the semi-discrete case). In particular, it turns out that patches of Dupin cyclides fit well with the concept of discrete principal curvature line nets in Lie sphere geometry. 

\begin{definition}[\cite{paper_cyclidic}]
Let $f:\mathcal{V} \rightarrow \mathcal{Z}$ be a discrete Legendre map. Then a Dupin cyclide which shares the four curvature spheres with a face will be called a \emph{face-cyclide} for the corresponding face.
\end{definition}
\noindent In \cite{paper_cyclidic} it was shown that for any face of $f$ there exists a 1-parameter family of face-cyclides and, if suitably chosen, these patches of Dupin cyclides extend a discrete net to a continuously differentiable surface. Nevertheless, when working with face-cyclide congruences in this paper, we will not ask for this additional smoothness. 
\\\\We recall that in the Lie geometric setup a Dupin cyclide is described by an orthogonal $(2,1)$-splitting, where the light cones in the two $(2,1)$-planes  represent the curvature sphere congruences of the Dupin cyclide, which degenerate to two 1-parameter families of spheres in this case. 

This leads to another characterization of discrete Legendre maps in terms of face-cyclides:
%
%
\begin{proposition}
A congruence of Dupin cyclides 
\begin{equation*}
\gamma = (\gamma^+, \gamma^-): \mathcal{F} \rightarrow (D^+ , D^-) \subset G_{(2,1)}(\mathbb{R}^{4,2}) \times G_{(2,1)}(\mathbb{R}^{4,2})
\end{equation*}
induces a Legendre map with $\gamma$ as face-cyclides if and only if there exist two maps $s^+: \mathcal{E}^+ \rightarrow \mathcal{L}$ and $s^-:\mathcal{E}^- \rightarrow \mathcal{L}$ such that for each face $(ijkl)$
\begin{equation*}
s^+_{jk}, \ s^+_{li} \subset D^+ \ \ \text{ and } \ \ s^-_{ij}, \ s^-_{kl} \subset D^- 
\end{equation*}
and spheres of $s^+$ and $s^-$ incident to the same vertex span a contact element. 

Such a congruence $\gamma$ of Dupin cyclides will be called a \emph{face-cyclide congruence} for the induced Legendre map. 
\end{proposition}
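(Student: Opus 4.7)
The plan is to derive the equivalence from Proposition~\ref{prop_leg_from_spheres} together with the $(2,1)$-splitting description of Dupin cyclides recalled just before the statement: a sphere is a curvature sphere of the cyclide associated with an orthogonal splitting $D^+ \oplus D^-$ of $\mathbb{R}^{4,2}$ precisely when it belongs to $\mathcal{L} \cap D^+$ or $\mathcal{L} \cap D^-$.

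For the "only if" direction, suppose $\gamma$ is a face-cyclide congruence of some discrete Legendre map $f$ and set $s^\pm := s|_{\mathcal{E}^\pm}$ with $s_{ij} := f_i \cap f_j$. On any face $(ijkl)$ the face-cyclide property of $\gamma_{(ijkl)}$ puts the four edge curvature spheres of $f$ into the union $\mathcal{L} \cap (D^+ \cup D^-)$. The only point that needs to be fixed is which of the two families is labelled $D^+$ and which $D^-$; doing this in agreement with the edge labelling yields $s^+_{jk}, s^+_{li} \subset D^+$ and $s^-_{ij}, s^-_{kl} \subset D^-$. The condition that $s^+, s^-$ at a common vertex span a contact element is immediate from $s_{ij} \in f_i \cap f_j$.

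For the "if" direction, assume $s^\pm$ satisfy the stated conditions. Their union $s: \mathcal{E} \to \mathcal{L}$ meets the vertex-star hypothesis of Proposition~\ref{prop_leg_from_spheres} and therefore induces a discrete Legendre map $f$ with $s^\pm$ as its curvature sphere congruences. On each face $(ijkl)$ the four curvature spheres of $f$ then lie in $D^+ \cup D^-$ by hypothesis, hence are curvature spheres of the Dupin cyclide $\gamma_{(ijkl)}$; this is precisely the defining property of a face-cyclide.

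The conceptual point requiring most attention is the global compatibility of the labelling $(\gamma^+, \gamma^-)$ across adjacent faces: the face-cyclide property on two neighbouring cyclides forces the curvature sphere on their shared edge to lie in the same family of each, so once the assignment is fixed on one face it propagates uniquely. Built into the input data $\gamma = (\gamma^+, \gamma^-)$, this compatibility is what turns the correspondence into a genuine equivalence rather than one that is ambiguous up to family-swaps on individual faces; after that, both directions of the proof reduce to the two ingredients listed in the first paragraph.
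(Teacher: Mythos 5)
Your proof is correct and follows essentially the same route as the paper's: both directions reduce to Proposition~\ref{prop_leg_from_spheres} together with the $(2,1)$-splitting description of the curvature spheres of a Dupin cyclide. The paper's own proof is just a two-sentence version of this; your additional remarks on the vertex-wise alternation of the two families and the propagation of the labelling across faces fill in details the paper leaves implicit.
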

\begin{proof}
Suppose that $\gamma$ is such a congruence of Dupin cyclides, then the maps $s^+$ and $s^-$ satisfy the conditions of Proposition \ref{prop_leg_from_spheres} and therefore induce a discrete Legendre map.

Conversely, the curvature sphere congruences of a discrete Legendre map provide suitable maps $s^+$ and $s^-$ and the congruence $\gamma$ is given by face-cyclides.
\end{proof}
%
%
\vspace*{0.3cm}\section{Discrete channel surfaces}
\subsection{Lie geometric characterization of discrete channel surfaces}
%
%
\noindent Similar to the characterization (S4) of Blaschke \cite[\S 84]{blaschke} that uses Lie cyclides to distinguish smooth channel surfaces, we use the face-cyclides to define discrete channel surfaces:
\begin{definition}
A \emph{discrete channel surface} is a discrete Legendre map that admits a face-cyclide congruence which is constant along one family of coordinate  ribbons. 

Such a face-cyclide congruence will be called a \emph{Lie cyclide congruence} for the discrete channel surface; the label of the coordinate ribbon in which the Lie cyclide congruence is constant is then the \emph{circular direction} of the discrete channel surface.
\end{definition}
\noindent We start by investigating discrete versions of the properties (S1)-(S3) discussed in the introduction:
\begin{proposition}[Property (S3)]\label{1family_curv}
One of the curvature sphere congruences of a discrete channel surface is constant along its coordinate lines. 
\end{proposition}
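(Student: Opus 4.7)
My plan is to reduce the proposition to a local claim at a single vertex, which will then follow from a signature count in $\mathbb{R}^{4,2}$. Without loss of generality, suppose the circular direction is $+$, so the Lie cyclide congruence $\gamma = (\gamma^+, \gamma^-)$ is constant along each $+$-coordinate ribbon $R$, with common value $\gamma_R = (\gamma^+_R, \gamma^-_R)$. I will show that $s^+$ is constant along every $+$-coordinate line.

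The local setup is straightforward. Pick two consecutive $+$-edges $e$ and $e'$ on a $+$-coordinate line, meeting at a common vertex $v$. There is a $+$-ribbon $R$ containing two adjacent faces $F$ and $F'$ sharing the $-$-edge at $v$, such that $e$ is a $+$-edge of $F$ and $e'$ is a $+$-edge of $F'$. (In a $\mathbb{Z}^2$ picture, $F$ and $F'$ are the two faces of $R$ lying on one side of the $+$-coordinate line.) This is the only combinatorial input beyond the definitions.

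Two containments now follow immediately. By the Legendre property at $v$, both $s^+(e)$ and $s^+(e')$ lie in the contact element $f_v$, which is a totally null $2$-plane in $\mathbb{R}^{4,2}$. By the face-cyclide property applied to $F$ and $F'$, together with $\gamma(F)=\gamma(F')=\gamma_R$, both $s^+(e)$ and $s^+(e')$ are also contained in $\gamma^+_R$, which carries signature $(2,1)$.

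I expect the only substantive step to be the final signature observation. Since $s^+(e)$ and $s^+(e')$ are null lines in $\gamma^+_R$ and they sit inside the totally null plane $f_v$, they are in addition mutually orthogonal. But a $(2,1)$-subspace has maximal isotropic dimension one, so two mutually orthogonal null lines inside it must coincide; hence $s^+(e) = s^+(e')$. Iterating along the $+$-coordinate line yields the asserted constancy. Everything preceding this signature step is just unpacking the definitions of face-cyclide congruence and of Lie cyclide, so no other obstacle is anticipated.
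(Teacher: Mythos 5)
Your proof is correct and follows essentially the same route as the paper: both arguments reduce to two faces of a circular-direction ribbon adjacent along the $-$-edge at the vertex in question, use the constancy of the face-cyclide to place both incident $+$-curvature spheres inside the fixed $(2,1)$-plane $\gamma^+_R$ as well as inside the contact element, and conclude that they coincide. The only difference is in how the final step is justified: where the paper cites the fact that a contact element of a smooth Legendre map carries at most two curvature spheres (one from $D^+$ and one from $D^-$, using $D^+\cap D^-=\{0\}$), you supply the underlying linear algebra directly -- two mutually orthogonal null lines in a subspace of signature $(2,1)$ must coincide since its Witt index is one.
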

\begin{proof}
First consider two faces $(i,j,j'',i'')$ and $(i',j',j,i)$ of a discrete Legendre map $f$ adjacent along the edge $(ij)^-$ and assume that they share a common face-cyclide $\gamma$ given by the splitting $D^+ \oplus^\perp D^-$ of $\mathbb{R}^{4,2}$. Then, the three curvature spheres $s^+_{i'i}$, $s^+_{ii''}$ and $s^-_{ij}$ of $f$ are in the same contact element $f_i$. Moreover, by definition, these three spheres are also curvature spheres in a common contact element of the smooth face-cyclide $\gamma$.

However, since there are at most two curvature spheres in each contact element of a smooth Legendre map and $D^+ \cap D^- = \{ 0 \}$, the curvature spheres $s^+_{i'i}$ and $s^+_{ii''}$ must coincide. 

In the case of a discrete channel surface the Lie cyclide congruence is constant along each coordinate ribbon in the circular direction. Hence, by the above, the curvature spheres along the circular direction must be constant. 
\end{proof}
\noindent Thus, for a circular coordinate line of a discrete channel surface the constant  curvature sphere assigned to the edges is contained in every contact element along this coordinate line. In this way, we gain information on vertices from data originally given on edges: 
\begin{corollary}[Property (S1)]
A discrete channel surface envelopes a 1-parameter family of spheres.
\end{corollary}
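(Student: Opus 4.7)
The plan is to use Proposition \ref{1family_curv} directly: it already tells us that the curvature sphere congruence in the circular direction is constant along each of its coordinate lines, so we only need to reorganize this data into an enveloped sphere congruence whose image is 1-parameter.

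Let me denote the circular direction by \textquotesingle$+$\textquotesingle, so that by Proposition \ref{1family_curv} the curvature sphere congruence $s^+:\mathcal{E}^+\to\mathcal{L}$ is constant on every \textquotesingle$+$\textquotesingle-coordinate line. To each \textquotesingle$+$\textquotesingle-coordinate line $\ell$ I would therefore associate the unique sphere $s_\ell\in\mathcal{L}$ taken by $s^+$ on every edge of $\ell$. Since $s^+_{ij}=f_i\cap f_j$, this constant sphere is contained in the contact elements $f_i$ of both endpoints of any edge in $\ell$, and by iterating along $\ell$ we get $s_\ell\in f_i$ for every vertex $i$ lying on $\ell$.

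Next I would define the enveloped congruence: each vertex $i\in\mathcal{V}$ lies on exactly one \textquotesingle$+$\textquotesingle-coordinate line $\ell(i)$, so setting $s_i:=s_{\ell(i)}$ yields a well-defined map $s:\mathcal{V}\to\mathcal{L}$ with $s_i\in f_i$, i.e.\ a sphere congruence enveloped by $f$. Finally, the image of $s$ consists of one sphere per \textquotesingle$+$\textquotesingle-coordinate line; the set of such lines is itself indexed by the transverse (discrete) \textquotesingle$-$\textquotesingle-direction, so $s$ takes values in a discrete 1-parameter family of spheres, as required.

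I do not expect a serious obstacle: the content is essentially a rewriting of Proposition \ref{1family_curv} together with the definition of the curvature sphere $s^+_{ij}=f_i\cap f_j$. The only point that deserves a sentence of care is the well-definedness of the map $i\mapsto s_{\ell(i)}$, which relies on the fact that in a quadrilateral cell complex with \textquotesingle$\pm$\textquotesingle-labeling the \textquotesingle$+$\textquotesingle-coordinate lines give a partition of the interior vertex set.
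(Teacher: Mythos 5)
Your argument is correct and is essentially the paper's own: the corollary is stated as an immediate consequence of Proposition \ref{1family_curv}, via exactly the observation you make, that the constant curvature sphere of a circular coordinate line lies in every contact element along that line, so the spheres indexed by the transverse direction form the enveloped 1-parameter family. Your extra remark on well-definedness is a harmless elaboration of what the paper leaves implicit.
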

\noindent Furthermore, we can also deduce information about the curvature sphere congruence along the non-circular direction of a discrete channel surface. 

Without loss of generality, we assume that $'+'$ is the circular direction. Since along each $'+'$-coordinate ribbon the curvature spheres of the discrete channel surface coincide with those of the corresponding Lie cyclide, the curvature spheres $s^-$ along each $'+'$-coordinate ribbon lie in a fixed $(2,1)$-plane in $\mathbb{R}^{4,2}$. 

Conversely, this prescribed data for both curvature sphere congruences is now sufficient to induce a discrete Legendre map which is a discrete channel surface: 
\begin{proposition}\label{channel_curv_spheres}
A discrete Legendre map induced by a map $s:\mathcal{E} \rightarrow \mathcal{L}$ is a discrete channel surface with circular direction '$+$' if and only if $s|_{\mathcal{E}^+}$ is constant along any '$+$'-coordinate line and $s|_{\mathcal{E}^-}$ determines a $(2,1)-$plane along each '$+$'-coordinate ribbon.  
\end{proposition}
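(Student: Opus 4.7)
The plan is to prove the two implications separately, working ribbon by ribbon. For the forward direction, Proposition \ref{1family_curv} already yields the constancy of $s|_{\mathcal{E}^+}$ along '$+$'-coordinate lines. If $\gamma_R = (D^+_R, D^-_R)$ denotes the constant Lie cyclide over a '$+$'-coordinate ribbon $R$, the characterization of face-cyclide congruences established earlier forces $s^-_{ij}, s^-_{kl} \in D^-_R$ for every face $(ijkl) \subset R$, so that $s|_{\mathcal{E}^-}$ lies in the fixed $(2,1)$-plane $D^-_R$ along $R$.

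For the converse, the heart of the argument is an explicit construction of a Lie cyclide congruence. For each '$+$'-coordinate ribbon $R$, I would take $D^-_R$ to be the $(2,1)$-plane supplied by the hypothesis and set $D^+_R := (D^-_R)^\perp$. Since the orthogonal complement of a non-degenerate $(2,1)$-subspace in $\mathbb{R}^{4,2}$ is again a non-degenerate $(2,1)$-subspace, this automatically yields an orthogonal splitting, hence a Dupin cyclide that is by construction constant along $R$. To invoke the face-cyclide characterization it then remains to verify, for each face $(ijkl) \subset R$, the inclusions $s^+_{jk}, s^+_{li} \in D^+_R$ and $s^-_{ij}, s^-_{kl} \in D^-_R$; the latter is built into the definition of $D^-_R$.

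The key calculation is the former inclusion, which amounts to $s^+_{jk}$ (and analogously $s^+_{li}$) being orthogonal to $s^-_e$ for every '$-$'-edge $e$ belonging to a face in $R$. Since $jk$ lies on one of the two bounding '$+$'-coordinate lines of $R$ and both endpoints of $e$ sit on these two bounding lines, constancy of $s|_{\mathcal{E}^+}$ along the relevant bounding line lets me replace $s^+_{jk}$ by the equal value of $s^+$ on a '$+$'-edge incident to an endpoint of $e$; the two spheres then share a contact element and are thereby automatically orthogonal within its null 2-plane. The main (minor) obstacle I anticipate is precisely this combinatorial bookkeeping inside the ribbon, since the signature of $D^+_R$ is forced by that of $D^-_R$ and no deeper identity is required.
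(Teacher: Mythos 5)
Your proof is correct and follows essentially the same route as the paper: the forward direction via Proposition \ref{1family_curv} together with the fact that the constant Lie cyclide's $(2,1)$-splitting traps $s|_{\mathcal{E}^-}$ along each ribbon, and the converse by assembling the ribbon's $'-'$-spheres into a $(2,1)$-plane whose orthogonal splitting yields the constant face-cyclide. The only difference is that you make explicit the orthogonality $s^+\perp s^-$ (obtained from shared contact elements along the bounding $'+'$-coordinate lines, using constancy of $s|_{\mathcal{E}^+}$ there), which the paper leaves implicit in the phrase ``suitable curvature spheres''.
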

\begin{proof}
Utilizing the arguments above, it only remains to show that $s$ with the required properties induces a discrete channel surface. Thus, we have to construct a face-cyclide congruence which is constant along each '$+$'-coordinate ribbon. Indeed, for any '$+$'-coordinate ribbon the spheres $s|_{\mathcal{E}^-}$ belonging to this coordinate ribbon determine a Dupin cyclide, which is a face-cyclide: along the coordinate ribbon the constant sphere $s|_{\mathcal{E}^+}$ and the spheres of $s|_{\mathcal{E}^-}$ are suitable curvature spheres of the Dupin cyclide.  
\end{proof}
\noindent Since any $(2,1)$-plane in $\mathbb{R}^{4,2}$ uniquely determines a Dupin cyclide, we obtain the following corollary as an immediate consequence of Proposition \ref{channel_curv_spheres}:
\begin{corollary}\label{unique}
The Lie cyclide congruence of a discrete channel surface is unique. 
\end{corollary}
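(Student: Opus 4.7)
The plan is to read off uniqueness directly from Proposition \ref{channel_curv_spheres} together with the remark that a $(2,1)$-plane in $\mathbb{R}^{4,2}$ determines its Dupin cyclide uniquely. Given a discrete channel surface with circular direction '$+$', the curvature sphere map $s:\mathcal{E}\to\mathcal{L}$ is already determined by the underlying discrete Legendre map (it is the intersection of adjacent contact elements). So the only thing a Lie cyclide congruence adds is, for each '$+$'-coordinate ribbon, a choice of Dupin cyclide containing the four curvature spheres of every face in that ribbon.

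First, I would fix a '$+$'-coordinate ribbon and let $\gamma$ be any Lie cyclide congruence, constant along that ribbon, with $(2,1)$-splitting $D^+\oplus^\perp D^-$. By Proposition \ref{channel_curv_spheres} the spheres $s|_{\mathcal{E}^-}$ belonging to the ribbon span a $(2,1)$-plane, and by the face-cyclide condition in Proposition 1.4 they must all lie in $D^-$. Hence $D^-$ is forced to coincide with the $(2,1)$-plane spanned by this $s^-$-data, and consequently $D^+=(D^-)^\perp$ is also determined.

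Next, because any $(2,1)$-plane uniquely determines a Dupin cyclide, the Lie cyclide assigned to the ribbon is unique. Doing this independently for every '$+$'-coordinate ribbon yields uniqueness of $\gamma$ as a face-cyclide congruence.

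I do not foresee a real obstacle: the only subtle point is that $s|_{\mathcal{E}^-}$ genuinely spans a $(2,1)$-plane (and not a degenerate subplane), but this is precisely the hypothesis guaranteed by Proposition \ref{channel_curv_spheres}, so the argument is really a one-line corollary once that proposition is in hand.
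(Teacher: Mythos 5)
Your proposal is correct and follows exactly the route the paper intends: the corollary is stated as an immediate consequence of Proposition \ref{channel_curv_spheres} together with the fact that a $(2,1)$-plane uniquely determines a Dupin cyclide, and your argument simply spells out why the $s^-$-data along each '$+$'-coordinate ribbon forces $D^-$, hence $D^+=(D^-)^\perp$, hence the Lie cyclide. No gaps; this matches the paper's reasoning.
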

%
%
\ \\Next we aim to examine if a discrete channel surface carries one family of circular curvature lines and therefore property (S2) is satisfied also in the discrete case.
%
\\\\Contemplating the notion of a ``circle'' in our Lie geometric setup, a circle, considered as a curve of points, is a M\"obius geometric object. Hence, we fix a M\"obius geometry by choosing a point sphere complex $\mathfrak{p}$ to distinguish point spheres from the set of 2-spheres. A circle is then a Dupin cyclide with one family of point spheres as curvature spheres. The other family of curvature spheres of a circle is then given by an elliptic sphere pencil, namely, by all spheres containing the points of the circle. 

Clearly, in Euclidean space the notion of a circle is not Lie invariant: for example, a parallel transformation maps a circle to a torus of revolution.  
%
\\\\Property (S2) for smooth channel surfaces can be reformulated in the following way: for any projection to a M\"obius geometry, there exists a particular 1-parameter family of touching Dupin cyclides; they touch along one family of curvature lines and all Dupin cyclides of this family become circles in this M\"obius geometry.
\\\\We say that a smooth Dupin cyclide \emph{touches} a discrete Legendre map along a discrete  curvature line if the contact elements of the Dupin cyclide and the discrete Legendre map correspond along a coordinate line in $\mathcal{G}$.

\hspace*{1.5cm}\begin{minipage}{7cm}
\centering\hspace*{1.5cm}\includegraphics[scale=0.23]{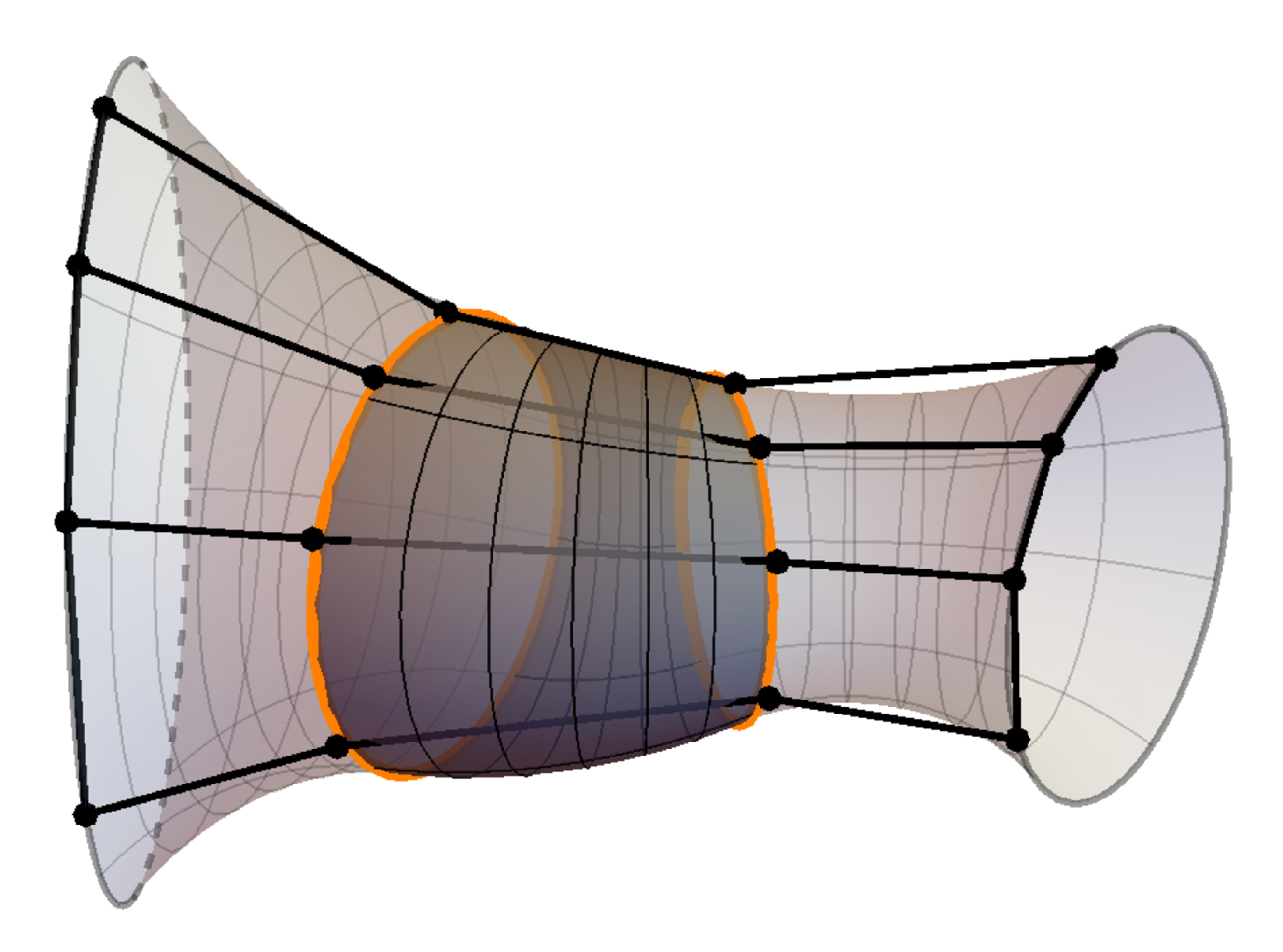}
\\ \ 
\end{minipage}
\ \\\begin{minipage}{6.5cm}
\hspace*{0.8cm}\includegraphics[scale=0.19]{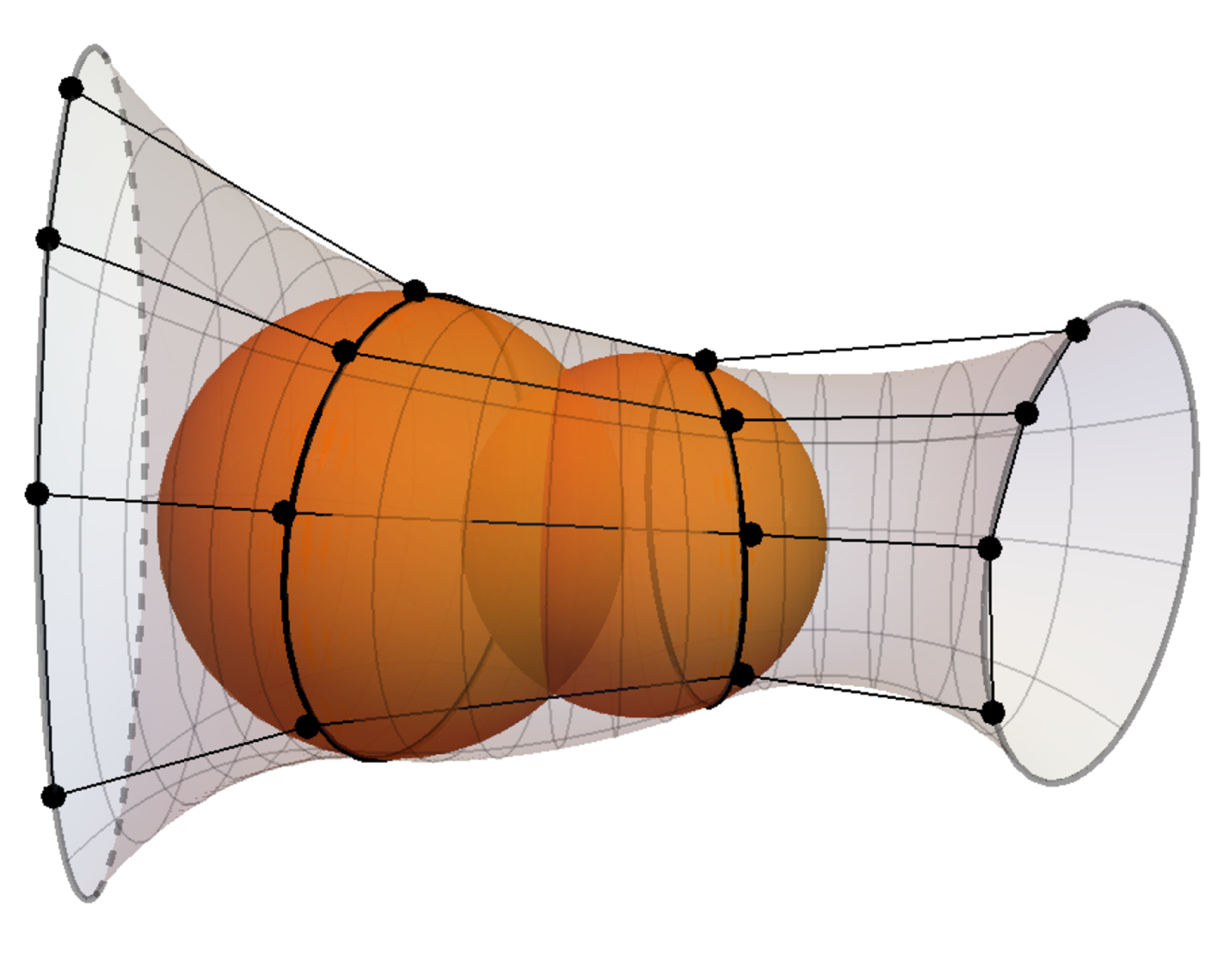}
\end{minipage}
\begin{minipage}{0.29cm}
\ \ 
\end{minipage}
\begin{minipage}{6.5cm}
\includegraphics[scale=0.69]{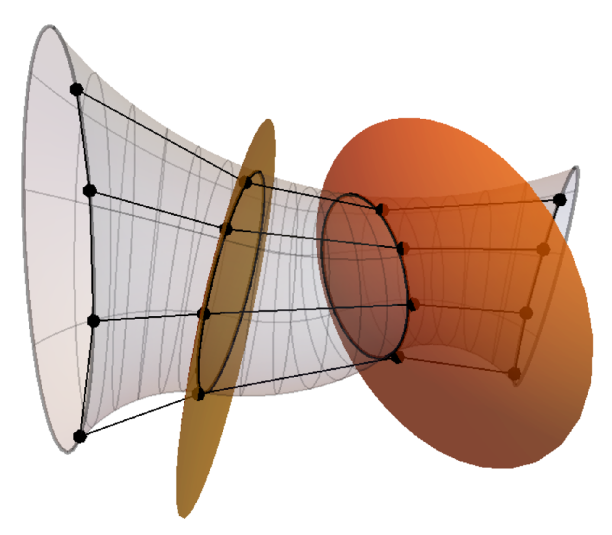}
\end{minipage}
\\\begin{minipage}{5cm}
\hspace*{0.9cm}\includegraphics[scale=0.17]{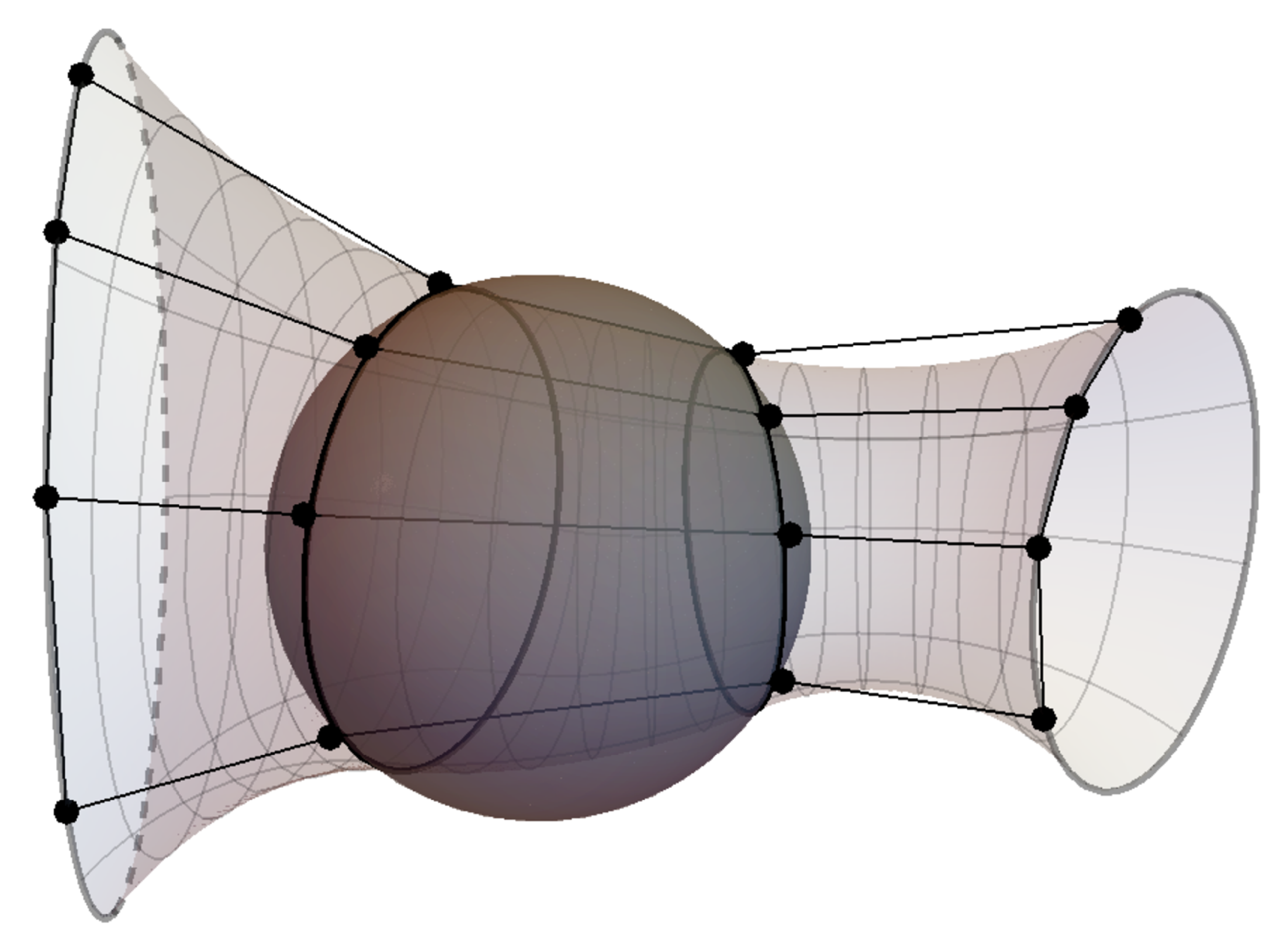}
\end{minipage}
\begin{minipage}{1cm}
\ \ 
\end{minipage}
\begin{minipage}{6.5cm}
\vspace*{0.9cm}\hspace*{0.9cm}\includegraphics[scale=0.73]{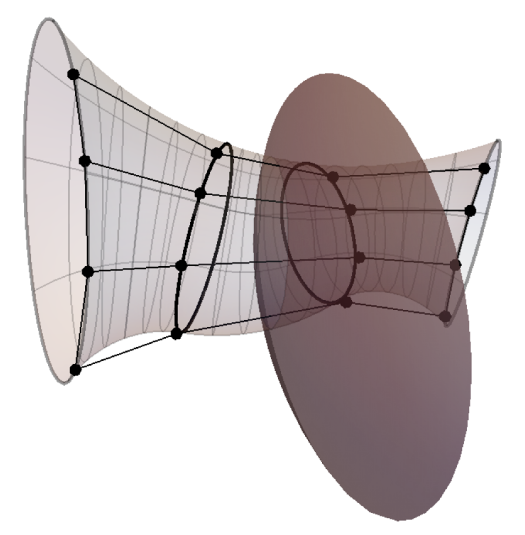}
\end{minipage}
\\[10pt] \ \\\begin{minipage}{0.5cm}
\ \ 
\end{minipage}
\begin{minipage}{11cm}
\textbf{Fig 3.} A discrete channel surface with different data given along coordinate ribbons (gray) and coordinate lines (orange). \textit{Top:} a constant Lie cyclide (gray) along a coordinate ribbon in circular direction and two generating circles (orange). \textit{Left:} two curvature spheres (orange), which are constant along the circular direction and a face-sphere (gray) containing the two generating circles. \textit{Right:} quer-spheres (orange) for each circular coordinate line and a face-quer-sphere (gray) orthogonal to the corresponding face-sphere.
\end{minipage}
\begin{minipage}{0.5cm}
\ \ 
\end{minipage}
\\[12pt]Thus, analogous to the smooth case, we will prove the existence of circular curvature lines for our discrete channel surfaces:
%
%
\begin{theoremand}[Property (S2)]\label{circ_curv_lines}
If $f$ is a discrete channel surface, then, fixing an arbitrary projection to a M\"obius geometry, there exists a 1-parameter family of Dupin cyclides that touch $f$ along one family of coordinate lines and become circles in this M\"obius geometry. 
\\These circles are called \emph{generating circles} of the discrete channel surface; in the Lie geometric context we also speak of \emph{generating cyclides}.
\end{theoremand}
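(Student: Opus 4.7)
The plan is, for each '$+$'-coordinate line $\ell$ (taking '$+$' to be the circular direction), to construct by hand a Dupin cyclide $\gamma_\ell$---described by its $(2,1)$-splitting of $\mathbb{R}^{4,2}$---that touches $f$ along $\ell$ and becomes a circle in the M\"obius geometry fixed by the point sphere complex $\mathfrak{p}$. By Proposition \ref{1family_curv}, the curvature sphere $s^+$ is constant along $\ell$, so every contact element $f_i$ at a vertex $i\in\ell$ contains this common sphere $s^+$; the M\"obius geometric point of $f_i$, namely the unique point sphere $p_i\in f_i\cap\langle\mathfrak{p}\rangle^\perp$, is then orthogonal to both $\mathfrak{p}$ and $s^+$ (the latter because $f_i$ is a null $2$-plane).

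Next I would pick a '$+$'-coordinate ribbon $R$ bordering $\ell$ and let $D^+\oplus^\perp D^-$ denote its Lie cyclide. Then $s^+\in D^+$, while for each vertex $i\in\ell$ the curvature sphere $s^-_{ij}$ on the adjacent '$-$'-edge $(ij)^-$ going into $R$ lies in $D^-$, so $f_i=\langle s^+,s^-_{ij}\rangle\subset\langle s^+\rangle\oplus D^-$. I would then set
\begin{equation*}
\tilde D^+ := \bigl(\langle s^+\rangle\oplus D^-\bigr)\cap\langle\mathfrak{p}\rangle^\perp, \qquad \tilde D^- := (\tilde D^+)^\perp,
\end{equation*}
and verify: (i) that $\tilde D^+$ is a non-degenerate $(2,1)$-plane, because---in the generic case $(s^+,\mathfrak{p})\neq 0$---the projection $\tilde D^+\to D^-$ along $\langle s^+\rangle$ is an isometric isomorphism (using $s^+\perp D^-$ and $(s^+,s^+)=0$); (ii) that $\mathfrak{p},s^+\in\tilde D^-$, since $\tilde D^+\subset\langle s^+,\mathfrak{p}\rangle^\perp$; and (iii) that each $p_i$ lies in $\tilde D^+$ by construction.

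Given (i)--(iii), $\tilde D^+\oplus^\perp\tilde D^-$ defines a Dupin cyclide $\gamma_\ell$ whose $\tilde D^+$-family of curvature spheres consists entirely of point spheres (as $\tilde D^+\subset\langle\mathfrak{p}\rangle^\perp$), so $\gamma_\ell$ is a circle in the chosen M\"obius geometry. Moreover $s^+\in\tilde D^-\cap\mathcal{L}$ and $p_i\in\tilde D^+\cap\mathcal{L}$ show that $f_i=\langle s^+,p_i\rangle$ is a contact element of $\gamma_\ell$ at every vertex $i\in\ell$, whence $\gamma_\ell$ touches $f$ along $\ell$. Letting $\ell$ range over all '$+$'-coordinate lines produces the desired $1$-parameter family of generating cyclides.

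The main obstacle I anticipate is consistency: if $\ell$ borders two ribbons whose Lie cyclides are a priori unrelated, the construction above could depend on the choice of $R$. I would resolve this by showing that $\tilde D^+$ is already determined by $s^+$, $\mathfrak{p}$ and the points $\{p_i\}_{i\in\ell}$, so that either adjacent ribbon yields the same cyclide $\gamma_\ell$. A secondary technicality is the degenerate case $(s^+,\mathfrak{p})=0$, in which $s^+$ is itself a point sphere and the above formula for $\tilde D^+$ degenerates; here the limiting circle through the $p_i$ should be extracted directly from the structure within $\langle\mathfrak{p}\rangle^\perp$.
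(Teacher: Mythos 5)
Your proposal is correct and is essentially the paper's own proof: after normalizing $(s^+,\mathfrak{p})=-1$, your plane $\tilde D^+=\bigl(\langle s^+\rangle\oplus D^-\bigr)\cap\langle\mathfrak{p}\rangle^\perp$ is exactly the image $\varphi(D^-)$ of the isometry $y\mapsto y+(y,\mathfrak{p})s^+$ that the paper applies to the timelike plane of a touching (Lie) cyclide, and the verification that the resulting cyclide touches $f$ and consists of point spheres is the same. Your two caveats (independence of the choice of bordering ribbon, and the degenerate case $\mathfrak{p}\perp s^+$) are reasonable extra care; the paper sidesteps the latter by simply choosing $\mathfrak{p}\not\perp s^+$, and does not address the former since the theorem only asserts existence.
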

\begin{proof}
Without loss of generality we assume that $f$ is a discrete channel surface where the label $'+'$ provides the circular direction. Further, let 
\begin{equation*}
(c,c^{\perp}): \{ '+'-\text{coordinate lines} \} \rightarrow G_{(2,1)}(\mathbb{R}^{4,2})\times G_{(2,1)}(\mathbb{R}^{4,2})
\end{equation*}
be an arbitrary 1-parameter family of touching Dupin cyclides along the family of '$+$'-coordinate lines, where $s^+ \in c$ and $s^- \in c^\perp$. Since a Lie cyclide touches $f$ along the two corresponding discrete circular curvature lines, the existence of such a family $(c,c^{\perp})$ is guaranteed.  

We choose a point sphere complex $\mathfrak{p}$ such that $\mathfrak{p} \not\perp s^+$ and without loss of generality we assume $(s^+,\mathfrak{p})=-1$. To construct the sought-after family of Dupin cyclides we consider the isometry 
\begin{equation*}
\varphi: c^\perp \rightarrow \mathbb{R}^{4,2}, \ \ y \mapsto y + (y,\mathfrak{p})s^+.
\end{equation*}
Since $\varphi$ injects and is timelike, the map $\tilde{c}:=(\varphi(c^\perp))^\perp$ defines a 1-parameter family of Dupin cyclides. Furthermore, since $f=\langle s^+, \varphi(s^-)\rangle$ and $\varphi(s^-) \perp \mathfrak{p}$, the Dupin cyclides touch $f$ and consist of point spheres, hence degenerate to circles in this M\"obius geometry.
\end{proof}
%
\subsection{M\"obius geometric properties of a discrete channel surface}
In order to construct discrete channel surfaces as envelopes of 1-parameter families of spheres, their curvature lines and, in particular, the generating circles will be crucial. To gain a better geometric understanding of the situation we study them in the context of transformations in a M\"obius geometry. 

Thus, in this section, we fix a M\"obius geometry $\langle \mathfrak{p} \rangle^\perp$ and study discrete channel surfaces $\mathfrak{f}:\mathcal{V} \rightarrow \langle \mathfrak{p} \rangle^\perp$ from a M\"obius geometric point of view.
\\\\Recall from \cite{rib_coord, rib_dajczer} that two smooth curves $c_1, c_2$ are said to form a \textit{Ribaucour pair} if they envelop a circle congruence, that is, at corresponding points they are tangent to a common circle. We call the induced point-to-point correspondence between two parametrized curves a \textit{Ribaucour correspondence}.
\\\\Since, later, Ribaucour pairs of two circles will be crucial, we remark on important properties of such pairs:
\begin{lemma}\label{two_rib_corr}
If $c_1$ and $c_2$ are two cospherical circles, then there exist exactly two Ribaucour correspondences between them. 
\end{lemma}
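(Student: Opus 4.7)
The plan is to use M\"obius invariance of both the cospherical condition and the notion of a Ribaucour pair in order to reduce $c_1$ and $c_2$ to a canonical configuration. Since $c_1$ and $c_2$ lie on a common $2$-sphere $S$, I would first apply a M\"obius transformation sending $S$ to a plane, so that $c_1$ and $c_2$ become coplanar. Next I would observe that any enveloping circle $\gamma$ tangent to $c_1$ at $p$ and to $c_2$ at $q$ passes through $p$ and $q$ with tangent direction at $p$ lying in $S$; since generically the chord $pq$ is not parallel to $T_p c_1$, the (unique) plane of $\gamma$ is the plane spanned by $T_p c_1$ and $pq$, which coincides with $S$. Thus the entire enveloped circle congruence may be studied as a family of planar circles. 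A further M\"obius transformation of the plane then reduces the pair $(c_1,c_2)$ to one of two non-degenerate canonical forms: (i) two disjoint concentric circles of distinct radii, when $c_1 \cap c_2 = \emptyset$; (ii) two distinct lines through the origin, when $c_1$ and $c_2$ meet transversally. The tangential case can be handled as a limit of (i) or (ii).

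In the concentric case (i), any circle tangent to a concentric circle at a point $p$ has its centre on the ray from the origin through $p$, and applying this at $\phi(p)$ as well forces $\phi(p)$ to lie on the same ray. The ray meets $c_2$ in exactly two points, the \emph{radial image} of $p$ and its antipode on $c_2$; these yield two one-parameter families of common tangent circles, hence two Ribaucour correspondences. The line case (ii) is analogous: the centre of any circle tangent to both lines must lie on one of the two angle bisectors, and each bisector provides a one-parameter family of common tangent circles, again giving two Ribaucour correspondences.

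The main obstacle I anticipate is the passage from the pointwise count to the global count of Ribaucour correspondences. A priori one might hope to construct further correspondences by switching between the two pointwise branches as $p$ varies along $c_1$; however a Ribaucour correspondence is smooth, so the branch choice is locally constant along $c_1$. Together with the observation that the two branches correspond to the two disjoint one-parameter families of common tangent circles identified in the canonical forms, this rules out any ``mixed'' correspondence and delivers exactly two Ribaucour correspondences in all cases.
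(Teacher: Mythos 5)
Your proposal is correct and follows essentially the same route as the paper, which disposes of both lemmas in one line by invoking a suitable stereographic projection of the two cospherical circles to a plane followed by elementary geometric arguments; you simply supply the details (planarity of the enveloped circles, reduction to concentric circles or concurrent lines, and the count of tangent-circle families) that the paper leaves implicit. Your closing remark that the branch choice is locally constant, together with the observation (made in the paper only as a remark after the lemma) that the tangential case yields a degenerate correspondence, completes the count in the same spirit.
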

\noindent Note that if $c_1$ and $c_2$ are touching circles, that is, two parallel lines after stereographic projection, one of the two Ribaucour correspondences degenerates: the smooth enveloped circle congruence is then given by one of the two circles and all points on this circle are in Ribaucour correspondence with the contact point of the two circles. 
\begin{lemma} \label{rib_circular} 
A Ribaucour pair of two circles satisfies the following conditions:
\begin{itemize}
\item[\emph{(i)}] The two circles lie on a common sphere.
\item[\emph{(ii)}] Any two pairs of corresponding points are concircular.
\end{itemize}
\end{lemma}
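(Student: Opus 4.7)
The plan is to exploit the M\"obius invariance of the Ribaucour condition: any M\"obius transformation sends circles to circles and preserves tangency, so both the hypothesis and the two conclusions of the lemma are M\"obius invariant. This lets us freely normalize the configuration of $c_1$ and $c_2$.

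For (i), I would apply a M\"obius transformation sending some point of $c_1$ to infinity, so that $c_1$ becomes a straight line $\ell_1$. At every parameter $t$ the enveloping circle $C_t$ is tangent to $\ell_1$ at $p_1(t)$; since $\ell_1$ is itself the common tangent line at $p_1(t)$, the plane $\Pi_t$ of $C_t$ must contain all of $\ell_1$. The circle $C_t$ also meets $c_2$ tangentially at $p_2(t)$, so $\Pi_t$ contains both $p_2(t)$ and the tangent line $\ell_2(t)$ to $c_2$ at that point. Taking $\ell_1$ to be the $z$-axis and writing $c_2(t)=(x(t),y(t),z(t))$, the plane $\Pi_t$ through $\ell_1$ and $p_2(t)$ is $y(t)x-x(t)y=0$, and the condition $c_2'(t)\in\Pi_t$ reduces to $x'(t)y(t)-y'(t)x(t)=0$, i.e.\ $(x/y)'=0$. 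Thus $x/y$ is constant along $c_2$, so $c_2$ lies in a fixed plane through $\ell_1$. Reversing the M\"obius transformation, $c_1$ and $c_2$ lie on a common M\"obius sphere, proving (i).

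For (ii), using (i) and a further M\"obius transformation we may assume $c_1, c_2$ lie in a common plane. Let $C_0, C_1$ be two distinct enveloping circles with tangent points $p_1^k\in c_1$ and $p_2^k\in c_2$. Invert at $p_1^0$: then $c_1$ becomes a line $\tilde\ell_1$, $C_0$ becomes a parallel line $\tilde\ell_0$, $c_2$ becomes a circle $\tilde c_2$ tangent to $\tilde\ell_0$ at $\tilde p_2^0$, and $C_1$ becomes a circle tangent to $\tilde\ell_1$ at $\tilde p_1^1$ and to $\tilde c_2$ at $\tilde p_2^1$. Concircularity of the four original points is then equivalent to collinearity of $\tilde p_1^1, \tilde p_2^0, \tilde p_2^1$. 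Consider the homothety centered at the tangency point $\tilde p_2^1$ carrying $\tilde C_1$ to $\tilde c_2$; it sends $\tilde p_1^1$ to a point of $\tilde c_2$ and the tangent line $\tilde\ell_1$ to a tangent line of $\tilde c_2$ parallel to $\tilde\ell_1$. Since $C_0$ and $C_1$ belong to the same branch of the enveloping family, this parallel tangent is exactly $\tilde\ell_0$ (rather than its antipodal parallel), so $\tilde p_1^1$ is sent to $\tilde p_2^0$, proving the collinearity.

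The main obstacle is part (i): one must upgrade a pointwise plane containment to a global planarity statement, and the cleanest way I see is the coordinate calculation above, which turns the constraint into the ODE $(x/y)'=0$. Part (ii) then reduces, via the classical trick of inverting at a tangency point, to an elementary homothety argument that should be mentioned in the proof but requires no serious work. An alternative route would be to encode circles as $(2,1)$-planes in the M\"obius quadric and read off (i) from a dimension count of the span $D_1+D_2$, but the argument above seems more transparent and self-contained.
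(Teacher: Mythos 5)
Your argument is correct and follows essentially the same route as the paper, which only remarks that the lemma ``follows by elementary geometric arguments after establishing that there exists a suitable stereographic projection of the two circles to the plane'': you supply the M\"obius normalization for (i) and the planar inversion-plus-homothety details for (ii) that the paper leaves to the reader. The one step deserving an extra word is the branch selection in (ii) --- deciding which of the two tangents of $\tilde c_2$ parallel to $\tilde\ell_1$ is hit --- since picking the wrong one amounts to mixing the two Ribaucour correspondences of Lemma \ref{two_rib_corr}, for which concircularity genuinely fails; you identify this correctly, though a fully rigorous version would track the internal/external tangency types along the chosen branch.
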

\noindent The proofs of the lemmas follow by elementary geometric arguments after establishing that there exists a suitable stereographic projection of the two circles to the plane (cf.~Figure 4).
%
\\\\For discrete curves, the enveloping condition of a Ribaucour transformation translates into circularity of two neighbouring pairs of  vertices \cite{bobenko_book}: 
\begin{definition} 
Two discrete curves $\mathfrak{c}_1, \mathfrak{c}_2$ form a \emph{Ribaucour pair} if any two adjacent pairs of corresponding points are concircular. 

A Ribaucour transformation of discrete circular curves is said to be \emph{induced by a smooth Ribaucour transformation} if the discrete point-to-point correspondence can be extended to a smooth Ribaucour correspondence between the two underlying smooth circles.
\end{definition}
\noindent Although, on a smooth Ribaucour pair of circles, there are many choices for vertices to obtain a discrete Ribaucour pair, a discrete Ribaucour correspondence induced by a smooth Ribaucour transformation is more restrictive: by Lemma \ref{two_rib_corr}, for two cospherical circles there exist, up to subdivision, only two such discrete Ribaucour correspondences (cf.~Figure 4).
%
\\\\\includegraphics[scale=0.15]{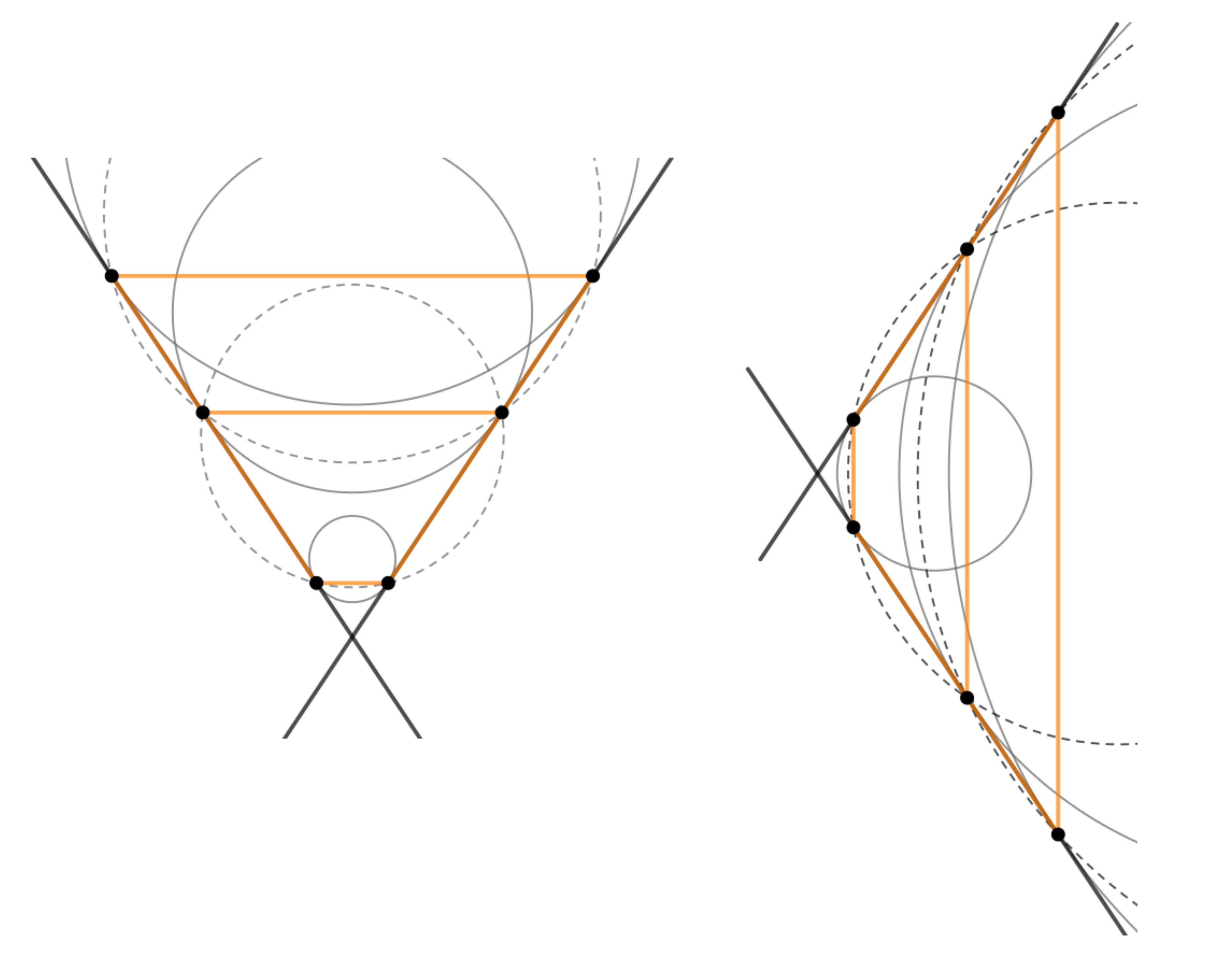}
\includegraphics[scale=0.2]{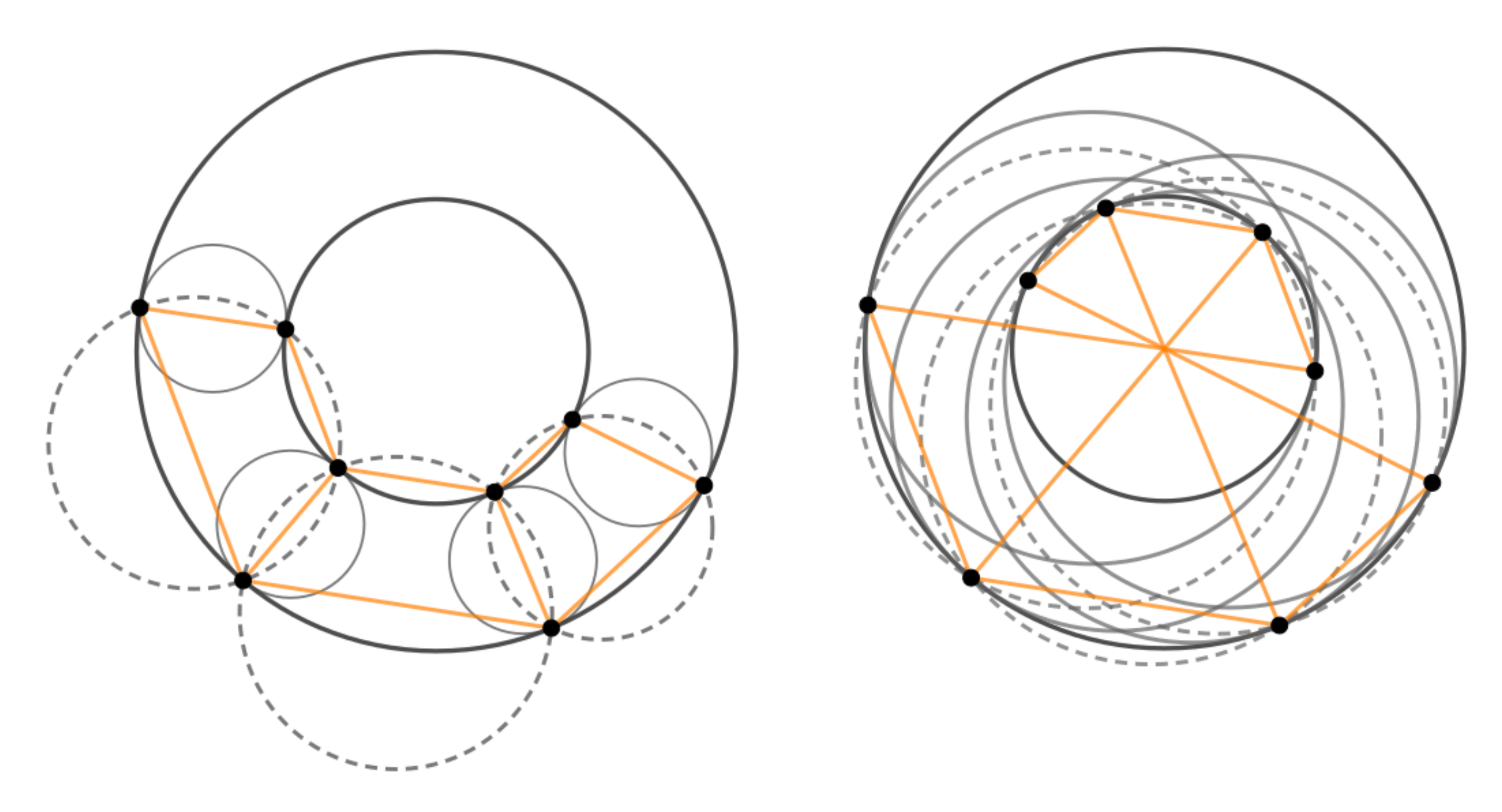}
\hspace*{0.4cm}\includegraphics[scale=0.25]{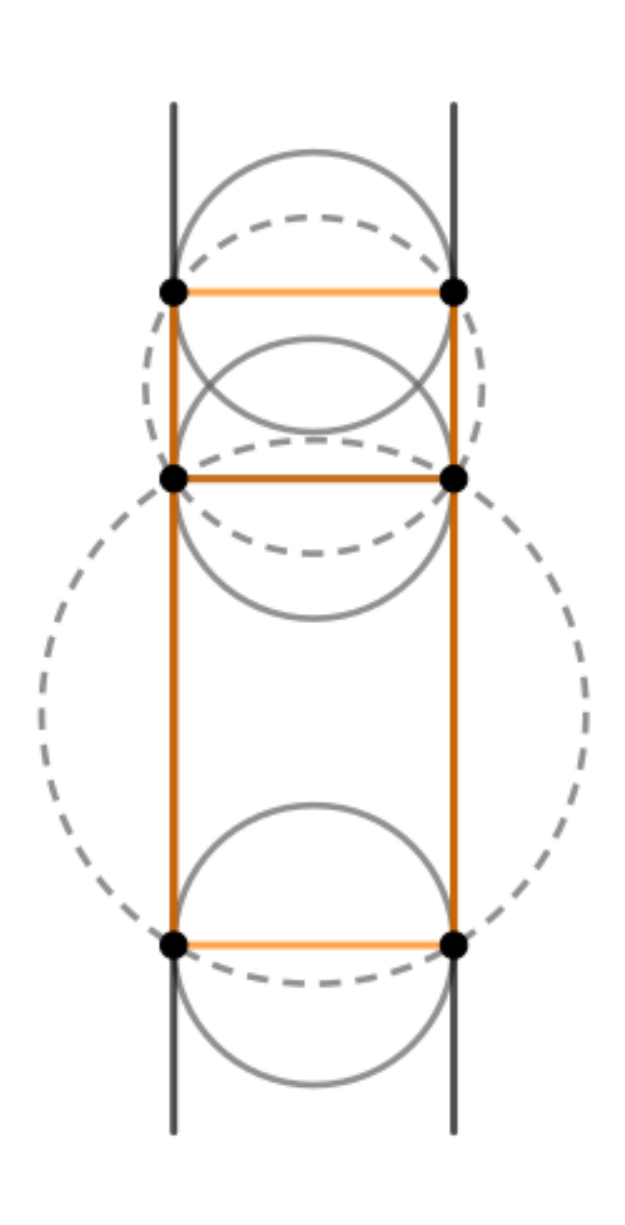}
\\\begin{minipage}{0.5cm}
\ \ 
\end{minipage}
\begin{minipage}{11cm}
\textbf{Fig 4.} Discrete Ribaucour correspondences induced by a smooth Ribaucour transformation between two circles
\end{minipage}
\begin{minipage}{0.5cm}
\ \ 
\end{minipage}
%
%
\\\\[6pt]In this realm of Ribaucour transformations, we next investigate the geometry of two curvature lines of a discrete channel surface.

Recall that a smooth Dupin cyclide is an inversion of a circular cone, a cylinder or a torus of revolution and therefore any two curvature lines of the same family of a smooth Dupin cyclide form a smooth Ribaucour pair. Hence, because by Theorem \ref{circ_curv_lines} the generating circles of a discrete channel surface are curvature lines on the corresponding Lie cyclides, two adjacent generating circles are related by a Ribaucour transformation.
\\\\Moreover, by construction, two corresponding vertices on two adjacent generating circles also lie on a curvature line of the Lie cyclide, which proves together with Lemma \ref{rib_circular} (i) the following proposition: 
\begin{propositionand}\label{channel_rib_induced}
Any two adjacent circular curvature lines of a discrete channel surface are related by a discrete Ribaucour correspondence that is induced by a smooth Ribaucour transformation. 

In particular, two adjacent circular curvature lines lie on a common sphere, called a \emph{face-sphere} of the discrete channel surface.
\end{propositionand}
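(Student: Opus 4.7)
The plan is to realize the two adjacent generating circles as '$+$'-curvature lines of the common Lie cyclide in the chosen M\"obius projection, transfer the classical Ribaucour property of two curvature lines of the same family on a Dupin cyclide, and verify that the discrete '$-$'-edge correspondence matches the resulting smooth one.

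First I would fix notation: let $L_1, L_2$ be two adjacent '$+$'-coordinate lines of the discrete channel surface $\mathfrak{f}$, bounding a '$+$'-coordinate ribbon $R$ along which the Lie cyclide $\gamma$ is constant, and denote by $\Gamma$ the corresponding Dupin cyclide in the M\"obius projection $\langle \mathfrak{p}\rangle^\perp$. The generating circles $C_1, C_2$ produced by Theorem \ref{circ_curv_lines} touch $\mathfrak{f}$ along $L_1, L_2$ respectively; since $\Gamma$ also touches $\mathfrak{f}$ along these coordinate lines with the same '$+$'-principal directions at the vertices, the uniqueness of a principal curvature circle through a point of $\Gamma$ with prescribed principal tangent forces $C_i$ to coincide with the '$+$'-curvature line of $\Gamma$ through the vertices of $L_i$.

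Next I would invoke the classical fact that any smooth Dupin cyclide is M\"obius-equivalent to a circular cone, cylinder or torus of revolution, whence any two curvature lines of the same family form a smooth Ribaucour pair whose enveloped circle congruence is exactly the other family of curvature circles. Applied to $C_1, C_2 \subset \Gamma$, this yields a smooth Ribaucour correspondence in which two points $p \in C_1$ and $q \in C_2$ are paired if and only if they lie on a common '$-$'-curvature circle of $\Gamma$.

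It remains to match this smooth correspondence with the discrete one coming from the '$-$'-edges of $R$. For each '$-$'-edge $(v,v')$ of $R$ the curvature sphere $s^-_{vv'}$ lies in the $(2,1)$-plane $D^-$ defining $\gamma$, so it is a '$-$'-curvature sphere of the Lie cyclide; its point-sphere intersection with $\Gamma$ is therefore a '$-$'-curvature circle of $\Gamma$ containing both $v$ and $v'$. Hence the discrete vertex correspondence between $L_1$ and $L_2$ is the restriction of the smooth Ribaucour correspondence identified above, which is precisely the statement that the discrete correspondence is induced by a smooth one; Lemma \ref{rib_circular}(i) then supplies the common sphere containing $C_1$ and $C_2$, the desired face-sphere. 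The main obstacle I anticipate is the careful identification of $C_i$ with a '$+$'-curvature circle of $\Gamma$; while geometrically natural, it requires combining the touching construction of Theorem \ref{circ_curv_lines} with the agreement of principal directions of $\mathfrak{f}$ and $\gamma$ at each vertex, together with the uniqueness of a principal curvature circle through a point of a Dupin cyclide with a prescribed tangent.
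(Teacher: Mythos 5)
Your proposal is correct and follows essentially the same route as the paper: identify the generating circles as curvature lines of the (constant) Lie cyclide via Theorem \ref{circ_curv_lines}, use the classical fact that two curvature lines of the same family on a smooth Dupin cyclide form a Ribaucour pair, observe that corresponding vertices lie on a common curvature line of the other family so the discrete correspondence is induced by the smooth one, and conclude with Lemma \ref{rib_circular}(i). Your write-up merely makes explicit two identifications the paper leaves implicit (that the generating circles coincide with the '$+$'-curvature circles of the Lie cyclide, and that the '$-$'-edge curvature spheres realize the enveloped circle congruence).
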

\noindent We remark that Lie geometrically a face-sphere is the common curvature sphere of its two corresponding generating circles, interpreted as Dupin cyclides (cf.~Figure 3).
%
%
\\\\Moreover, by Proposition \ref{channel_rib_induced} and Lemma \ref{rib_circular} (ii), we deduce the following relation between non-circular curvature lines of a discrete channel surface:
\begin{corollary}\label{cor_non_circ_rib}
Any two discrete curvature lines in the non-circular direction of a discrete channel surface form a Ribaucour pair.
\end{corollary}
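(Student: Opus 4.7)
The plan is to reduce the claim to Proposition \ref{channel_rib_induced} and part (ii) of Lemma \ref{rib_circular} by identifying the right configuration of generating circles associated with two non-circular coordinate lines.

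First I would fix a projection to a M\"{o}bius geometry and, without loss of generality, assume that $'+'$ is the circular direction, so that the non-circular curvature lines of the discrete channel surface $\mathfrak{f}$ are precisely its $'-'$-coordinate lines. Let $L$ and $\tilde L$ be any two such $'-'$-coordinate lines; the natural point-to-point correspondence between them is the one inherited from the grid structure, matching a vertex $u\in L$ with the unique vertex $v\in\tilde L$ that shares a common $'+'$-coordinate line with $u$. To verify the Ribaucour pair property, I would then fix an arbitrary adjacent pair $u,\tilde u\in L$, together with their partners $v,\tilde v\in\tilde L$, and try to show that the four vertices $u,\tilde u,v,\tilde v$ are concircular.

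The crucial observation is that, by construction, $u$ and $v$ lie on a common $'+'$-coordinate line while $\tilde u$ and $\tilde v$ lie on the adjacent $'+'$-coordinate line. In the chosen M\"{o}bius geometry, these two $'+'$-coordinate lines realize, via Theorem \ref{circ_curv_lines}, two adjacent generating circles $C$ and $\tilde C$ of $\mathfrak{f}$. Proposition \ref{channel_rib_induced} then supplies a smooth Ribaucour transformation between the underlying circles of $C$ and $\tilde C$ that induces the discrete correspondence along the $'-'$-edges joining them. Under this smooth correspondence, both $(u,\tilde u)$ and $(v,\tilde v)$ are pairs of corresponding points, so Lemma \ref{rib_circular}(ii) immediately yields the concircularity of $u,\tilde u,v,\tilde v$. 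Since the adjacent pair was arbitrary, this is exactly the definition of $L$ and $\tilde L$ forming a discrete Ribaucour pair.

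The main obstacle I expect is not analytic but one of bookkeeping: of the two possible smooth Ribaucour correspondences between cospherical circles granted by Lemma \ref{two_rib_corr}, one must be sure that the one supplied by Proposition \ref{channel_rib_induced} really is the one that records $u\leftrightarrow\tilde u$ and $v\leftrightarrow\tilde v$ coming from the $'-'$-edges of $\mathcal{G}$. Once this identification is settled, the concircularity of the four vertices is a one-line consequence of Lemma \ref{rib_circular}(ii), and the statement extends automatically to arbitrary (not necessarily adjacent) pairs $L,\tilde L$, since the argument is entirely local in the pair of adjacent $'+'$-coordinate lines through $u,\tilde u$.
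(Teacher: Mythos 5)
Your proof is correct and takes essentially the same route as the paper, which deduces the corollary in one line from Proposition \ref{channel_rib_induced} together with Lemma \ref{rib_circular}(ii); you simply make explicit the grid bookkeeping (matching the two adjacent pairs on $L$, $\tilde L$ with corresponding points on two adjacent generating circles) that the paper leaves implicit. Your closing remark that the argument is local in the pair of adjacent generating circles, and hence covers non-adjacent $L$, $\tilde L$ as well, correctly accounts for the ``any two'' in the statement.
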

%
%
\noindent This reveals a relation to a subclass of circular nets, the so-called multi-circular nets, recently introduced in \cite{multinet}; that is, circular nets with the additional property that any coordinate quadrilateral is also circular. For a discrete channel surface, we obtain this additional circularity along any coordinate ribbon:  
\begin{corollary}
A discrete channel surface is composed of multi-circular coordinate ribbons which have circular curvature lines along their coordinate lines.
\end{corollary}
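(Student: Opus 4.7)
The plan is to reduce the claim directly to the results already established for generating circles and Ribaucour pairs. Without loss of generality take the circular direction to be $'+'$, so that a $'+'$-coordinate ribbon $R$ is a strip of faces glued along $'-'$-edges and bounded by two consecutive $'+'$-coordinate lines $\mathfrak{c}_1$ and $\mathfrak{c}_2$. By Theorem \ref{circ_curv_lines}, both $\mathfrak{c}_1$ and $\mathfrak{c}_2$ are generating circles, which immediately yields the second half of the statement: the coordinate lines bounding $R$ are circular curvature lines.

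For the multi-circularity within $R$, the coordinate quadrilaterals to consider are those built from any two vertices on $\mathfrak{c}_1$ and the two corresponding vertices on $\mathfrak{c}_2$ under the Ribaucour correspondence induced by the ribbon structure. By Proposition \ref{channel_rib_induced}, $\mathfrak{c}_1$ and $\mathfrak{c}_2$ are related by a discrete Ribaucour correspondence that is induced by a smooth Ribaucour transformation between the underlying smooth circles. Lemma \ref{rib_circular}(ii) asserts that on such a smooth Ribaucour pair of circles \emph{any} two pairs of corresponding points are concircular. Applied to our two chosen pairs of vertices, this gives the desired concircularity, hence multi-circularity of every coordinate quadrilateral inside $R$.

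The faces of the ribbon themselves are a special instance (adjacent pairs of corresponding points), so their circularity is recovered as well; this is already known from the principal contact element net structure but serves as a consistency check. Since the argument is uniform for every $'+'$-coordinate ribbon, the entire discrete channel surface decomposes into multi-circular ribbons of the desired type.

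The only real subtlety, and the step where care is needed, is the appeal to Lemma \ref{rib_circular}(ii): one must confirm that the discrete Ribaucour correspondence supplied by the ribbon is precisely the one coming from the smooth Ribaucour transformation between the generating circles, so that Lemma \ref{rib_circular}(ii) is applicable to \emph{arbitrary} pairs of corresponding discrete vertices and not just to adjacent ones. This is exactly the content of Proposition \ref{channel_rib_induced}, and once invoked, the rest of the corollary is immediate.
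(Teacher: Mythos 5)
Your argument is correct and is essentially the paper's own (implicit) proof: the paper derives this corollary directly from Theorem \ref{circ_curv_lines} (the bounding coordinate lines of a ribbon in the circular direction are inscribed in generating circles) together with Proposition \ref{channel_rib_induced} and Lemma \ref{rib_circular}(ii), which give concircularity of arbitrary, not just adjacent, pairs of corresponding vertices on the two bounding circles. You also correctly identify the one point that needs checking, namely that the ribbon's discrete correspondence is induced by the smooth Ribaucour transformation, which is exactly what Proposition \ref{channel_rib_induced} supplies.
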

%
%
\noindent Apart from the 1-parameter family of enveloped curvature spheres and the face-sphere congruence, we observe another family of spheres which is well-known from the smooth theory of channel surfaces.
As discussed by Blaschke \cite[\S 57]{blaschke}, for a smooth channel surface there exists the 1-parameter family of quer-spheres. 

If the enveloped sphere congruence is parametrized by arc-length, these spheres are obtained as first derivatives of the 1-parameter family of enveloped spheres that the  quer-spheres intersect orthogonally.
\\\\For discrete channel surfaces in a M\"obius geometry, we observe the existence of such spheres on vertices and on faces. Following the naming given by Blaschke, we will call this data quer-spheres and begin to discuss the quer-spheres living on vertices:
\begin{definition}
A sphere that orthogonally intersects the curvature sphere, which is constant along the circular direction, at the corresponding generating circle will be called the \emph{quer-sphere} of this generating circle.
\end{definition}
\noindent These quer-spheres of a discrete channel surface are closely related to the Lie cyclide congruence: since two adjacent Lie cyclides share a curvature sphere along the common circular curvature line, they have - considered as smooth channel surfaces -- a common classically defined quer-sphere. By construction, this quer-sphere coincides with the quer-sphere of the discrete channel surface.
\begin{propositionand}
For any two adjacent generating circles of a discrete channel surface there exists a sphere, which is orthogonal to the face-sphere and for which inversion interchanges the two generating circles.  

We call these spheres \emph{face-quer-spheres}.
\end{propositionand}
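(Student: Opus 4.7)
Recall from Proposition \ref{channel_rib_induced} that the two adjacent generating circles $\mathfrak{c}_1,\mathfrak{c}_2$ lie on a common face-sphere $F$. The plan is to construct the face-quer-sphere $Q$ as the unique sphere in ambient $3$-space that meets $F$ orthogonally along a carefully chosen circle $\gamma\subset F$, so that inversion in $Q$ preserves $F$ and acts on it by swapping the two generating circles.

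The key reduction relies on the standard Möbius-geometric fact that $Q\perp F$ if and only if inversion $\iota_Q$ preserves $F$ and restricts, in the Möbius geometry of $F\cong S^2$, to the circle-inversion $\iota_\gamma$ in $\gamma=Q\cap F$. Hence the required condition $\iota_Q(\mathfrak{c}_1)=\mathfrak{c}_2$ is equivalent to finding a circle $\gamma\subset F$ whose intrinsic inversion on $F$ interchanges $\mathfrak{c}_1$ and $\mathfrak{c}_2$, which is a purely two-dimensional Möbius problem on the face-sphere.

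To produce $\gamma$ I would apply a Möbius automorphism of $F$ that normalizes the pair $(\mathfrak{c}_1,\mathfrak{c}_2)$ and then transport an explicit $\gamma$ back. If the circles are disjoint they can be sent to two concentric circles of radii $r_1\neq r_2$ in $\mathbb{R}^2\cup\{\infty\}$, for which $\gamma$ is the concentric circle of radius $\sqrt{r_1 r_2}$; if they are tangent they reduce to two parallel lines with $\gamma$ the midline; if they are secant they reduce to two lines through the origin with $\gamma$ an angle bisector. In each case $\iota_\gamma$ interchanges the two circles, and lifting $\gamma$ to the unique ambient sphere $Q$ with $Q\cap F=\gamma$ and $Q\perp F$ yields the claimed face-quer-sphere.

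The main obstacle is the slight case distinction in the two-dimensional step above, which could be bypassed by working with the common Lie cyclide $\Gamma$ of the face: since $\mathfrak{c}_1,\mathfrak{c}_2$ are parallel circular curvature lines of $\Gamma$, and $\Gamma$ is Möbius-equivalent to a torus of revolution whose inversive symmetry group contains a sphere-inversion swapping any two prescribed parallel circles, the axisymmetric picture produces an explicit candidate for $Q$; a direct computation with the axis-centred spheres then confirms the orthogonality to the (likewise axisymmetric) face-sphere $F$ and identifies this $Q$ with the one obtained via the Möbius reduction.
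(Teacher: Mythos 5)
Your proposal is correct, but your main line of argument is genuinely different from the paper's. The paper's proof stays entirely with the Lie cyclide: the two adjacent generating circles are two curvature lines of the same family of a Dupin cyclide, and it simply invokes the known fact (citing the multi-nets paper) that such a pair is interchanged by an inversion, observing that the inversion sphere is then orthogonal to the sphere containing both circles, i.e.\ to the face-sphere. Your ``bypass'' at the end --- normalizing the Lie cyclide to a surface of revolution and using its inversive symmetry --- is essentially that argument. Your primary route instead reduces everything to a two-dimensional M\"obius problem on the face-sphere $F$ (whose existence you correctly take from Proposition~\ref{channel_rib_induced}): since a sphere $Q\perp F$ restricts on $F$ to the intrinsic inversion in $\gamma=Q\cap F$, it suffices to find a circle $\gamma\subset F$ whose inversion swaps the two generating circles, which you do by the standard normalization to concentric circles, parallel lines, or concurrent lines. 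This is self-contained and more elementary --- it needs no input about Dupin cyclides beyond cospherality, and as a bonus it exhibits the face-quer-sphere explicitly and shows it is unique in the disjoint and tangent cases (while in the secant case the two angle bisectors give two candidates, only one of which is the symmetry sphere of the Lie cyclide). What the paper's approach buys in exchange is that the inversion it produces is automatically a symmetry of the Lie cyclide, hence compatible with the Ribaucour correspondence between the two circles; the proposition as stated does not require this, so your proof is complete as written, but that compatibility is the geometric reason these spheres deserve the name ``quer-spheres''.
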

\begin{proof}
Note that, due to the existence of the Lie cyclide congruence, any two adjacent generating circles are two members of a family of curvature lines of a Dupin cyclide. Thus, they are cospherical and there exists an inversion mapping the curvature lines onto each other (see for example \cite{multinet}). Moreover, this sphere is orthogonal to the sphere containing the two curvature lines, which is the face-sphere.
\end{proof}
\subsection{A discrete channel surface as the envelope of a sphere curve}
A smooth channel surface is given as the envelope of a 1-parameter family of (curvature) spheres, hence, can be uniquely reconstructed from data given along a curve. 

Here we aim to obtain a discrete channel surface from prescribed 1-dimensional data. Recall from Example 2 that a 1-parameter family of prescribed curvature spheres does not necessarily yield a discrete channel surface. Indeed, as Proposition \ref{channel_curv_spheres} shows, information about both curvature sphere congruences on a 2-dimensional cell complex is required to describe a discrete channel surface.
\\\\However, considering the projection of a discrete channel surface to a M\"obius geometry, the face-spheres, described by a discrete curve of pairwise intersecting spheres, are available. 

Conversely, a discrete curve of prescribed pairwise intersecting spheres determines a family of circles, which allows the reconstruction of a discrete channel surface with these circles as generating circles. But, due to the ambiguity in the choice of the curvature spheres (which corresponds to the choice of a normal), the reconstructed discrete channel surface is far from being unique and this approach is therefore unsatisfactory.
%
%
%
\\\\To solve these issues, we prescribe spheres on a 1-dimensional cell complex $\mathcal{G}^1=(\mathcal{V},\mathcal{E})$, and use a combination of the 1-parameter family of enveloped curvature spheres and of face-spheres. 

In this way we obtain a unique discrete channel surface from a projection in a M\"obius geometry $\langle \mathfrak{p} \rangle ^\perp$:   
\begin{theorem}\label{sphere_curve}
Let $\mathfrak{s}=(s,\sigma):(\mathcal{V},\mathcal{E}) \rightarrow S^{3,1}$ be a regular discrete sphere curve, that is,  
\begin{itemize}
\item[\emph{(i)}] three consecutive spheres $\sigma_{ij}, s_j ,\sigma_{jk}$ form an elliptic sphere pencil:
\begin{equation*}
s_j \in \langle \sigma_{ij}, \sigma_{jk} \rangle =: c_j^\perp \subset \mathbb{R}^{4,1}=\langle \mathfrak{p}\rangle ^\perp \ \ \text{and}
\end{equation*}
\item[\emph{(ii)}] two consecutive spheres $s_i, s_j$ intersect $\sigma_{ij}$ at the same unoriented angle:
\begin{equation*}
(s_i, \sigma_{ij})^2= (s_j, \sigma_{ij})^2.
\end{equation*}
\end{itemize}
Then there exists a discrete channel surface with $s$ as one family of curvature spheres and $\sigma$ as face-spheres.

This channel surface is unique up to subdivision in the circular direction.
\end{theorem}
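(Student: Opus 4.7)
The plan is to read off the generating circles of the sought discrete channel surface directly from the data, and then construct the net by propagating a chosen discrete sampling of one generating circle via Ribaucour correspondences induced by the sphere curve. First, condition (i) makes the pencil $\langle\sigma_{j-1,j},\sigma_{j,j+1}\rangle=c_j^\perp$ an elliptic pencil, so each $c_j$ is a well-defined circle; the condition $s_j\in c_j^\perp$ means that $s_j$ contains $c_j$, exactly what is required if $s_j$ is to be a curvature sphere tangent to the eventual surface along $c_j$. Moreover $\sigma_{j,j+1}$ lies in both $c_j^\perp$ and $c_{j+1}^\perp$, so the two consecutive generating circles are cospherical on the common sphere $\sigma_{j,j+1}$. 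By Lemma \ref{two_rib_corr} there are exactly two Ribaucour correspondences between them.

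Condition (ii), $(s_i,\sigma_{ij})^2=(s_j,\sigma_{ij})^2$, is precisely the condition for the existence of a face-quer-sphere $q_{ij}$ orthogonal to $\sigma_{ij}$ whose inversion interchanges $s_i$ and $s_j$. Since this inversion fixes $\sigma_{ij}$ setwise while swapping $s_i$ and $s_j$, it maps $c_i=s_i\cap\sigma_{ij}$ to $c_j=s_j\cap\sigma_{ij}$ and thereby singles out one of the two Ribaucour correspondences. The construction of the net is then: choose an arbitrary discrete circular curve on $c_1$ (this is the subdivision freedom), and define vertices on $c_{j+1}$ from vertices on $c_j$ by inversion in $q_{j,j+1}$. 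The resulting quadrilaterals are circular by Lemma \ref{rib_circular}(ii), and assigning $s_j$ to each $+$-edge on the $j$-th coordinate line and the sphere naturally tangent along each $-$-edge as $s^-$ produces, via Proposition \ref{prop_leg_from_spheres}, a discrete Legendre map.

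It remains to verify the hypothesis of Proposition \ref{channel_curv_spheres}: that $s^+$ is constant along $+$-coordinate lines (clear by construction, equal to $s_j$) and that $s^-$ spans a $(2,1)$-plane along each $+$-coordinate ribbon. For the second, note that every curvature sphere $s^-_{e}$ in the ribbon between lines $j$ and $j{+}1$ is forced to be tangent to both $s_j$ and $s_{j+1}$ (because these are the $s^+$ on the two bounding $+$-edges of each face) and to contain the face-quer-sphere inversion image relations, hence to lie in the orthogonal complement of $\langle s_j,s_{j+1}\rangle$ intersected with an additional linear constraint coming from $\sigma_{j,j+1}$; this forces $s^-$ into a single $(2,1)$-plane, giving the Lie cyclide of the ribbon with $D^+$ the $(2,1)$-completion of $\langle s_j,s_{j+1}\rangle$ and $D^-$ this plane. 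Proposition \ref{channel_curv_spheres} then identifies the net as a discrete channel surface, the face-spheres recover $\sigma$ by Proposition \ref{channel_rib_induced}, and $s$ is the constant curvature sphere congruence by construction.

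For uniqueness, any other discrete channel surface with the prescribed $s$ and $\sigma$ has the same generating circles $c_j$ (read off from the data as above), and Corollary \ref{unique} forces the same Lie cyclide congruence, hence the same face-quer-spheres and the same Ribaucour correspondences; so the only remaining freedom is the initial sampling of $c_1$, i.e.\ subdivision in the circular direction. The main obstacle I expect is the verification in the previous paragraph that $s^-$ really spans a $(2,1)$-plane rather than a lower-dimensional or wrong-signature subspace: the orthogonality to $\sigma_{j,j+1}$ and tangency to $s_j,s_{j+1}$ formally give the right codimension, but making this sharp, and tying it to the unoriented angle condition (ii) which excludes the degenerate Ribaucour case, is the delicate step.
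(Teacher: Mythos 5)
Your overall architecture matches the paper's: extract the generating circles $c_j$ from the elliptic pencils $\langle\sigma_{ij},\sigma_{jk}\rangle$, invoke Lemma \ref{two_rib_corr} for the two Ribaucour correspondences between consecutive circles, use condition (ii) to select one of them, sample $c_1$ and propagate, and account for uniqueness up to subdivision. Where you diverge is in how condition (ii) is exploited, and that is exactly where your argument has a hole --- one you flag yourself. The paper constructs the blending Dupin cyclide between $c_j$ and $c_{j+1}$ explicitly: the oriented sphere $s_j$ induces a parallel normal field on $c_j$; a Ribaucour correspondence transports parallel normal fields (via \cite[Lemma 2.1]{rib_coord}); condition (ii) guarantees that the transported field agrees with the normal field of $s_{j+1}$ for one of the two correspondences; and the spheres centred at the intersection points of corresponding normals then form the second family of curvature spheres of a Dupin cyclide having $c_j,c_{j+1}$ as curvature lines and $s_j,s_{j+1}$ as curvature spheres. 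This is the step that verifies the hypothesis of Proposition \ref{channel_curv_spheres} (the $s^-$ spheres of each ribbon span a $(2,1)$-plane) and simultaneously establishes the uniqueness of the blending cyclide, which your uniqueness argument also needs: Corollary \ref{unique} only says that a given channel surface has a unique Lie cyclide congruence; it does not by itself exclude two channel surfaces with the same $s$ and $\sigma$ but different $s^-$ congruences.

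Your substitute --- that the $s^-$ spheres lie in $\langle s_j,s_{j+1}\rangle^\perp$ ``intersected with an additional linear constraint coming from $\sigma_{j,j+1}$'' --- is not correct as stated and is not made precise. The space $\langle s_j,s_{j+1}\rangle^\perp$ is four-dimensional of signature $(3,1)$, and the face-sphere $\sigma_{j,j+1}$ imposes no direct linear condition on $D^-$: it contains both generating circles but is not a curvature sphere of the blending cyclide (the remark after Proposition \ref{channel_rib_induced} identifies it as a common curvature sphere of the two degenerate generating cyclides, not of the Lie cyclide). The constraint you actually want is orthogonality to the face-quer-sphere $q_{j,j+1}$, i.e.\ $D^-\subset\{q_{j,j+1},s_j,s_{j+1}\}^\perp$, and you would then still have to check that this three-dimensional space has signature $(2,1)$ and that its null lines are precisely the spheres shared by adjacent contact elements along the $-$-edges --- which is where condition (ii) and the orientation bookkeeping must enter. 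Your inversion idea is a viable alternative route, consistent with the paper's face-quer-sphere proposition, but as written the central existence step is asserted rather than proved.
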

\begin{proof}
Suppose $\mathfrak{s}$ is a regular sphere curve. We will construct a projection of a discrete Legendre map in the M\"obius geometry $\langle \mathfrak{p} \rangle ^\perp$ which is a discrete channel surface with $\sigma$ as face-spheres and $s$ as 1-parameter family of enveloped spheres.
\\\\Taking into account condition (i), the map $\sigma:\mathcal{E} \rightarrow S^{3,1}$ consists of pairwise intersecting spheres and therefore determines a sequence of smooth circles $(c_i)_{i \in \mathcal{E}}$, which shall become the generating circles for the sought-after discrete channel surface. Since two adjacent circles $c_i$ and $c_j$ lie on the common sphere $\sigma_{ij}$, there exist two smooth Ribaucour correspondences between them (cf.~Lemma \ref{two_rib_corr}). We denote these point-to-point correspondences by $r_1:c_i \rightarrow c_j$ and $r_2:c_i \rightarrow c_j$.

Following the ideas in \cite{rib_coord}, we next aim to construct for any two adjacent generating circles $c_i$ and $c_j$ a Dupin cyclide, which will then become the Lie cyclide of the discrete channel surface.

First note that the normal field of the oriented sphere $s_i$ induces a parallel normal field on the circle $c_i$. By \cite[Lemma 2.\,1]{rib_coord}, any Ribaucour correspondence maps $n_i$ to a parallel normal field $n_j$ of the circle $c_j$. In doing so, the two Ribaucour correspondences $r_1$ and $r_2$ determine the orientation of $n_j$. Since the spheres $s_i$ and $s_j$ intersect the face-sphere $\sigma_{ij}$ at the same angle, the normal field $n_j$ induced by either $r_1$ or $r_2$ coincides with the normal field of the sphere $s_j$. We fix the normal field $n_j$ such that its orientation aligns with the normal field of $s_j$.

The intersection points of the constructed parallel normal fields $n_i$ and $n_j$ then give rise to a 1-parameter family of oriented spheres, which determines a Dupin cyclide with the circles $c_i$ and $c_j$ as curvature lines and the spheres $s_i$ and $s_j$ as curvature spheres. Thus, in this way we obtain a unique 1-parameter family of Dupin cyclides 
\begin{equation*}
\gamma_{ij}:\mathcal{V}\rightarrow G_{(2,1)}(\mathbb{R}^{4,2}) \times G_{(2,1)}(\mathbb{R}^{4,2})
\end{equation*}
such that two adjacent Dupin cyclides $\gamma_{ij}$ and $\gamma_{jk}$ have the sphere $s_j$ as common curvature sphere and intersect in the circle $c_j$.

\bigskip

\begin{minipage}{5cm}
\hspace*{1.8cm}\includegraphics[scale=0.3]{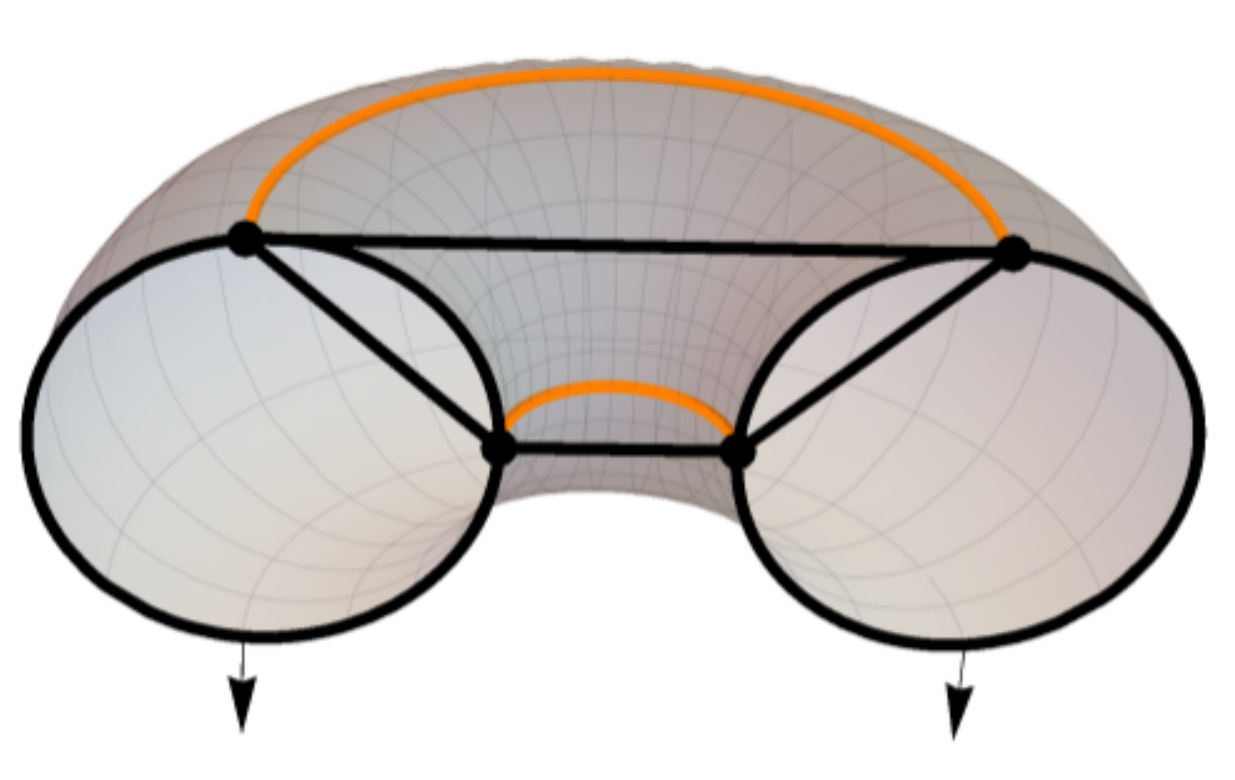}
\end{minipage}
\begin{minipage}{5cm}
\hspace*{1.3cm}\includegraphics[scale=0.33]{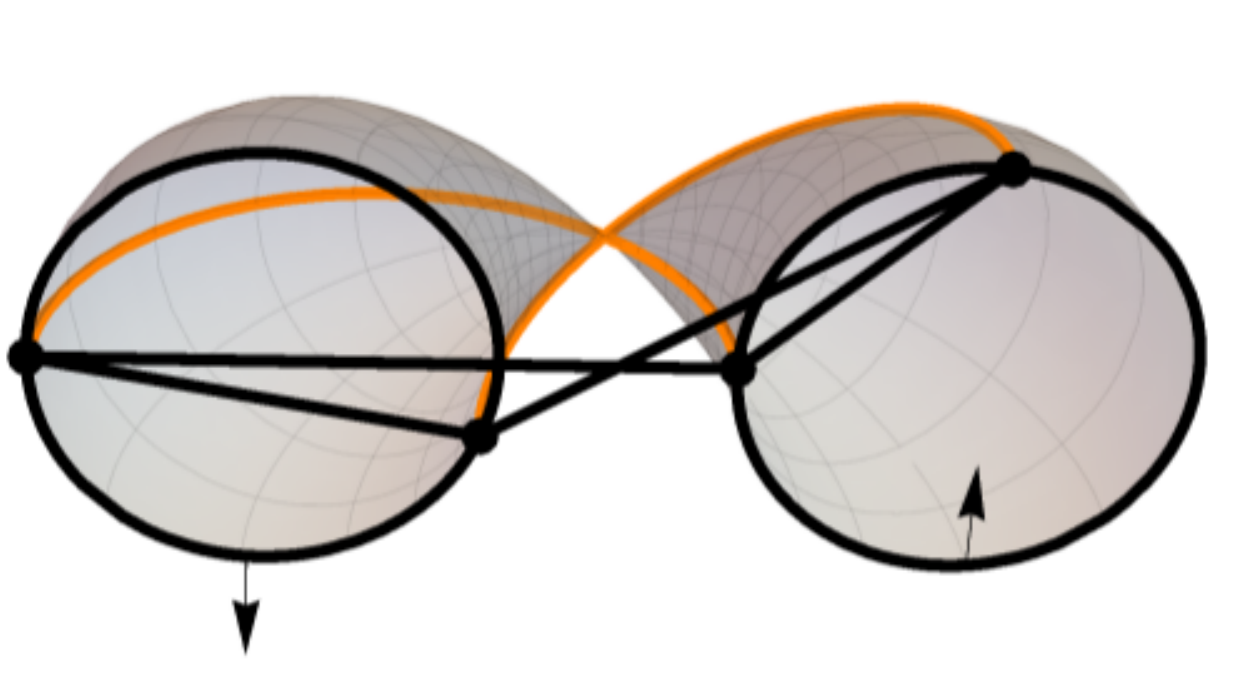}
\end{minipage}
\\[10pt] \ \\\begin{minipage}{0.5cm}
\ \ 
\end{minipage}
\begin{minipage}{11cm}
\textbf{Fig 5.} The constructed Lie cyclide between two adjacent generating circles depends on the orientation of the two corresponding curvature spheres.
\end{minipage}
\begin{minipage}{0.5cm}
\ \ 
\end{minipage}
\\\\[6pt]In order to obtain a discrete channel surface, we furthermore choose vertices on the generating circles such that corresponding vertices on two adjacent circles are related by a discrete Ribaucour transformation induced by the smooth Ribaucour correspondence $r_1$ or $r_2$. This choice of vertices is unique up to subdivision along the circles $c_i$.

In summary, we get a discrete channel surface with generating circles $c_i$, face-spheres $\sigma_{ij}$, Lie cyclides $\gamma_{ij}$ and it envelopes the 1-parameter family of curvature spheres $s_i$.
\end{proof}
\noindent We remark that, by Proposition \ref{rib_circular}, a discrete channel surface can always be closed in the circular direction. 

Also, the blending Dupin cyclide between two generating cyclides can be constructed Lie geometrically using a projection, similar to the construction of generating circles in the proof of Theorem \ref{circ_curv_lines}: 
given $c_i=(D_i \oplus D_i^{\perp})$ and $s_i \in D_i^{\perp}$, the projection
\begin{equation*}
\tau \mapsto \pi(\tau):= \tau - \frac{(\tau, s_i)}{(s_i,s_j)}s_i
\end{equation*}
yields a Dupin cyclide $(\pi(D_i), \pi(D_i)^\perp)$ touching the original Dupin cyclide as well as any prescribed sphere $s_j$. The symmetry of those construction in $i$ and $j$ follows from the fact that $c_i$ and $c_j$ are Ribaucour transformations of each other, as elaborated above.
%
%
%
%
\subsection{Discrete Dupin cyclides} As an application, we discuss how the subclass of discrete Dupin cyclides arises in this context. We define them in Lie sphere geometry analogous to the smooth case:
\begin{definition}
A discrete Legendre map is called a \emph{discrete Dupin cyclide} if it is a discrete channel surface with respect to both coordinate directions.
\end{definition}
\noindent As expected, a discrete Dupin cyclide has similar properties as its smooth counterpart:
\begin{proposition}\label{prop_dupin_cyclides}
A discrete Dupin cyclide $f:\mathcal{V}\rightarrow \mathcal{Z}$ has the following properties:
\begin{itemize}
\item[\emph{(i)}] any projection of $f$ in a M\"obius geometry has two families of discrete circular curvature lines.
\item[\emph{(ii)}] $f$ envelopes two 1-parameter families of spheres.
\item[\emph{(iii)}] the two curvature sphere congruences $s^{\pm}:\mathcal{E}^\pm\rightarrow \mathcal{L}$ lie in two fixed orthogonal $(2,1)$-planes in $\mathbb{R}^{4,2}$.
\item[\emph{(iv)}] the Lie cyclide congruence of $f$ is constant.
\end{itemize}
\end{proposition}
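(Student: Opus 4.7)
The plan is to reduce each of (i)--(iv) to the channel surface results from the previous sections, applied to both circular directions simultaneously. Two of the four items are almost immediate: applying the corollary of Proposition~\ref{1family_curv} (Property S1) to each direction yields two enveloped 1-parameter sphere families, which gives (ii); and applying Theorem~\ref{circ_curv_lines} to each direction produces two families of generating circles in any chosen M\"obius projection, which gives (i).

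For (iii), I first invoke Proposition~\ref{1family_curv} in both directions: the curvature sphere congruence $s^+$ is constant along each `$+$'-coordinate line and hence depends only on that line, so I may write $s^+_e = S^+_j$ for $j$ the label of the line; symmetrically $s^-_e = S^-_i$. Next I invoke Proposition~\ref{channel_curv_spheres} for circular direction `$+$': along any `$+$'-coordinate ribbon the spheres $s^-$ lie in a common $(2,1)$-plane. Since the $s^-$ values along such a ribbon are exactly the whole family $\{S^-_i\}$ by the column-constancy just established, all $S^-_i$ lie in a single $(2,1)$-plane $D^-$. The symmetric argument places all $S^+_j$ in a $(2,1)$-plane $D^+$. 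The orthogonality $D^+ \perp D^-$ will follow from the contact-element structure: at each vertex indexed by $(i,j)$ both $S^+_j$ and $S^-_i$ are curvature spheres in the contact element $f_{(i,j)}$, which is a null $2$-plane, so $(S^+_j, S^-_i)=0$ holds for every pair, and this pairwise orthogonality lifts to the spans.

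Then (iv) drops out of (iii): the constant orthogonal splitting $D^+ \oplus^\perp D^-$ defines a single Dupin cyclide which contains all four curvature spheres of every face, hence is a face-cyclide for every face. Being constant on $\mathcal{F}$ it is trivially constant along coordinate ribbons in either direction, so by uniqueness of the Lie cyclide congruence (Corollary~\ref{unique}) it coincides with the Lie cyclide congruence.

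The main obstacle I anticipate is a purely technical one in (iii): making sure that the spans $D^\pm$ really are $(2,1)$-planes with the correct signature (and not degenerations to a lower-dimensional subspace), and that their orthogonality is that of full $(2,1)$-planes rather than only of the individual spanning spheres. In the generic case where the families $\{S^+_j\}$ and $\{S^-_i\}$ each span a three-dimensional subspace this is automatic. In the degenerate case my fallback is to take $D^\pm$ to be the $(2,1)$-planes of the Lie cyclide produced by Proposition~\ref{channel_curv_spheres} on any single chosen ribbon; these come with the right signature and mutual orthogonality by construction, and the row/column-constancy from Proposition~\ref{1family_curv} then shows that these planes do not depend on the choice of ribbon.
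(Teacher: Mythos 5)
Your proposal is correct and follows essentially the same route as the paper: (i) and (ii) from Theorem~\ref{circ_curv_lines} and Proposition~\ref{1family_curv} applied to both directions, (iii) by combining Proposition~\ref{channel_curv_spheres} with the constancy of each curvature sphere congruence along its own coordinate lines, and (iv) by observing that the resulting fixed orthogonal splitting is the (unique) constant Lie cyclide congruence. You simply spell out the propagation and orthogonality details in (iii) more explicitly than the paper does, which is a harmless refinement rather than a different argument.
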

\begin{proof}
Suppose $f$ is a discrete Dupin cyclide. Then assertion (i) follows directly from Proposition \ref{circ_curv_lines}, while statement (ii) holds by Proposition \ref{1family_curv}. 

Property (iii) is a consequence of Proposition \ref{channel_curv_spheres}: since $f$ is a discrete channel surface with respect to both curvature directions, the curvature sphere congruences $s^+$ and $s^-$ are constant along the $'+'$- and $'-'$-direction, respectively. Thus, they lie in two fixed orthogonal $(2,1)$-planes, which shows condition (iii).

Hence, the two curvature sphere congruences of a discrete Dupin cyclide  coincide with the curvature sphere congruences of a (unique) smooth Dupin cyclide, which yields the constant Lie cyclide congruence of $f$. This proves assertion (iv).
\end{proof}
\noindent Thus, a discrete Dupin cyclide shares contact elements with a (unique) smooth Dupin cyclide. Hence, the contact elements along a circular discrete curvature line, thought of as lines in projective space, intersect in  a single point: they determine a multi-line congruence in the sense of \cite{multinet}. Therefore, a smooth Dupin cyclide yields discrete Dupin cyclides by (suitably) sampling along its curvature lines.
%
%
\ \\\section{A channel surface from two prescribed curvature lines}
\noindent Due to their special geometric properties smooth channel surfaces and, in particular Dupin cyclides, are useful as blending surfaces between two prescribed curves in computer graphics \cite{blending, blending_2}, as well as, in the theory of semi-discrete surfaces \cite{rib_coord}. 

Throughout this section we work in a M\"obius geometry $\langle \mathfrak{p} \rangle^\perp$, where we prescribe two discrete curves and aim to answer the following central question: is it possible to find a discrete channel surface which has these two prescribed discrete curves as curvature lines?
\\\\We start with the case of two prescribed circular discrete curvature lines. In \cite{blending}, it was proven that, for any two cospherical (unparametrized) smooth circles, there exists a blending smooth Dupin cyclide with these circles as curvature lines. 
\\\\In the discrete setup, a better understanding of the relation between the non-circular curvature lines of a discrete channel surface will shed light on the situation. As Vessiot noticed (cf.\,\cite{channel_curv, vessiot_channel}), non-circular curvature lines of a smooth channel surface satisfy a Riccati-equation. As a consequence, we obtain a particular relation between the non-circular curvature lines: suppose we fix any four non-circular curvature lines and compute the cross-ratio of the corresponding four points lying on a common generating circle. Then this cross-ratio is the same for any generating circle.
 
\noindent We observe the same property in the discrete setting:
\begin{proposition}\label{blending_circular}
The cross-ratio of four vertices on a generating circle of a discrete channel surface is constant along these four non-circular curvature lines.
\end{proposition}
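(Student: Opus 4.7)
Plan: Denote the four prescribed non-circular ($'-'$-direction) curvature lines by $\ell^1,\dots,\ell^4$ and, for each generating circle $c_i$ along the circular direction, set $v_i^k := \ell^k \cap c_i$. The goal is to show that the cross-ratio $(v_i^1,v_i^2;v_i^3,v_i^4)$, computed on the smooth circle $c_i$, is independent of $i$. The strategy is to produce, for every pair of adjacent generating circles $c_i$ and $c_{i+1}$, a M\"obius transformation $\Phi_{i,i+1}\colon c_i \to c_{i+1}$ satisfying $\Phi_{i,i+1}(v_i^k)=v_{i+1}^k$ for $k=1,\dots,4$. Since M\"obius transformations preserve cross-ratio, inductive application of $\Phi_{i,i+1}$ along the circular direction then gives the claim.

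The map $\Phi_{i,i+1}$ will be the smooth Ribaucour correspondence between the cospherical circles $c_i$ and $c_{i+1}$; by Proposition \ref{channel_rib_induced} this correspondence induces the discrete Ribaucour pair structure of $f$, so it sends $v_i^k$ to $v_{i+1}^k$ automatically. To see that this Ribaucour correspondence is M\"obius, I would invoke the face-quer-sphere from the Proposition and Definition just above this section: inversion in the face-quer-sphere interchanges $c_i$ and $c_{i+1}$, and being the restriction of an ambient inversion, it is automatically a M\"obius map when regarded as a correspondence between the two circles. It then remains to verify that this inversion agrees with the Ribaucour correspondence $\Phi_{i,i+1}$. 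Both are smooth Ribaucour correspondences between the cospherical circles $c_i$ and $c_{i+1}$, and by Lemma \ref{two_rib_corr} there are only two such, so they must agree as soon as one matches a single pair of corresponding points. The cleanest way to pin down the correct pair is to work on the intermediate Lie cyclide $\gamma_{i,i+1}$: the non-circular curvature lines of $f$ extend to smooth curvature lines of this Dupin cyclide, and its curvature-line correspondence is realised by the symmetry of $\gamma_{i,i+1}$ interchanging the two prescribed circular curvature lines $c_i$ and $c_{i+1}$ --- precisely inversion in the face-quer-sphere.

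Putting this together, $\Phi_{i,i+1}$ is a M\"obius transformation of circles satisfying $\Phi_{i,i+1}(v_i^k)=v_{i+1}^k$, so
\begin{equation*}
(v_i^1,v_i^2;v_i^3,v_i^4) \;=\; (v_{i+1}^1,v_{i+1}^2;v_{i+1}^3,v_{i+1}^4),
\end{equation*}
and induction on $i$ finishes the argument. The step I expect to be the main obstacle is precisely the identification of the two Ribaucour correspondences above, i.e.\ ruling out that the discrete curvature lines pick up the \emph{other} Ribaucour correspondence allowed by Lemma \ref{two_rib_corr}. This should be settled cleanly by the Lie cyclide viewpoint, since the orthogonal $(2,1)$-splitting representing $\gamma_{i,i+1}$ uniquely determines both its curvature-line structure and the ambient inversion interchanging its two prescribed curvature circles; as a fallback, one can also invoke directly the classical fact that the curvature-line correspondence between two curvature circles of a smooth Dupin cyclide is a M\"obius map, bypassing the explicit matching.
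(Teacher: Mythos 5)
Your proposal is correct and shares its skeleton with the paper's proof: both rest on Proposition \ref{channel_rib_induced}, by which corresponding vertices on adjacent generating circles lie on a common smooth curvature line of the intermediate Lie cyclide, and both then conclude that the induced circle-to-circle correspondence preserves cross-ratios, finishing by induction over the family of generating circles. The difference lies in how that preservation is justified: the paper simply cites Vessiot's observation that the non-circular curvature lines of a \emph{smooth} channel surface satisfy a Riccati equation (so that four of them cut a constant cross-ratio on the generating circles), applied to the Lie cyclide, whereas you prove the needed special case directly by realising the curvature-line correspondence of the Lie cyclide as the restriction of an ambient inversion in the face-quer-sphere; this is more elementary and self-contained. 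The ambiguity you flag --- which of the two Ribaucour correspondences of Lemma \ref{two_rib_corr} is the relevant one --- is in fact harmless for the cross-ratio argument, since \emph{both} correspondences between cospherical circles are induced by ambient (anti-)M\"obius maps and therefore both preserve the real cross-ratio of four concircular points; so your fallback (the classical fact that the curvature-line correspondence between two curvature circles of a Dupin cyclide is M\"obius) is really the cleanest route. Two minor points: since inversions are anti-conformal, cross-ratio preservation uses that the four points are concircular and hence have real cross-ratio; and the induction runs over the index labelling the generating circles, i.e.\ in the non-circular direction, not the circular one as written --- a slip of terminology only.
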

\begin{proof}
By Proposition \ref{channel_rib_induced}, two corresponding vertices on two adjacent generating circles lie on a smooth curvature line of the corresponding Lie cyclide. Thus, in particular, these vertices lie on four curvature lines of a smooth channel surface and as a consequence of Vessiot's observations in the smooth case, the cross-ratio for both generating circles is the same. Thus, the cross-ratio is constant along these non-circular discrete curvature lines.
\end{proof}  
\noindent Therefore, in general, a blending discrete channel surface with any number of intermediate generating circles between two discrete circles does not exist.

If we only prescribe two smooth circles and allow for a choice of vertices, we can rely on the smooth theory \cite{blending} and obtain blending discrete surfaces by suitably sampling a sequence of intermediate smooth blending Dupin cyclides (cf.~Proposition \ref{prop_dupin_cyclides}).    
%
%
%
\\\\Next we consider the second case, where two discrete curvature lines in the non-circular direction of the sought-after discrete channel surface are prescribed.

In \cite{rib_coord}, it was recently proven that for any Ribaucour pair of smooth curves there exists a 1-parameter family of channel surfaces having these curves as curvature lines. As the next theorem shows, there is more freedom in the discrete setup:
%
%
\begin{theorem}\label{prescribed}
Let $(c_1, c_2)$ be a discrete Ribaucour pair of curves, then there exists (up to subdivision in the circular direction) a 3-parameter family of discrete channel surfaces that contain both curves $c_1$ and $c_2$ as curvature lines.
\end{theorem}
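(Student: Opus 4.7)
The strategy is to apply the reconstruction of Theorem~\ref{sphere_curve} to a sphere curve $\mathfrak{s}=(s,\sigma)$ built from $(c_1,c_2)$, locating the three-dimensional freedom in an initial generating circle together with an initial enveloped sphere.

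First, the Ribaucour-pair hypothesis provides, for each edge $(i,j)$, a circle $C_{ij}$ through the four concircular points $c_1(i), c_1(j), c_2(j), c_2(i)$. Any face-sphere $\sigma_{ij}$ of the sought channel surface must contain $C_{ij}$, so $\sigma_{ij}$ varies in the 1-parameter pencil of spheres through that circle. In particular, the two spheres $\sigma_{(j-1)j}, \sigma_{j(j+1)}$ both contain $c_1(j), c_2(j)$, so their intersection -- the generating circle $G_j$ -- automatically passes through the prescribed vertices. Moreover, by Lemma~\ref{rib_circular}(ii) combined with Lemma~\ref{two_rib_corr}, the concircular condition implies that the pairings $c_1(i)\leftrightarrow c_1(j)$ and $c_2(i)\leftrightarrow c_2(j)$ are realised by a common smooth Ribaucour branch between $G_i$ and $G_j$ on $\sigma_{ij}$; hence the vertex-selection step in the proof of Theorem~\ref{sphere_curve} can be carried out with the given vertices of $c_1$ and $c_2$.

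The three free parameters are then identified as follows. Choose an initial generating circle $G_0$ through $c_1(0), c_2(0)$; such circles form a 2-parameter family in $\mathbb{R}^3$ (center on the perpendicular bisector plane of the pair, radius determined). Then choose an initial enveloped curvature sphere $s_0$ in the 1-parameter pencil of spheres containing $G_0$. I claim this data propagates rigidly: the face-sphere $\sigma_{01}$ is forced to be the unique sphere through both $C_{01}$ and $G_0$ (the generic intersection of two 1-parameter sphere pencils), the equal-angle condition (ii) of Theorem~\ref{sphere_curve} together with condition (i) determines $s_1$ from $s_0$ and $\sigma_{01}$, and this in turn determines $G_1$, $\sigma_{12}$, $s_2$, and so on inductively. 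Theorem~\ref{sphere_curve} applied to the resulting regular sphere curve then produces a discrete channel surface having $c_1, c_2$ as non-circular curvature lines, and the ``up to subdivision in the circular direction'' clause is inherited directly from that theorem.

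The principal obstacle, on which the parameter count hinges, is establishing the rigidity of the propagation: one must check that the Ribaucour-pair hypothesis on $(c_1,c_2)$ forces each successive face-sphere $\sigma_{j(j+1)}$ to be exactly the unique member of its 1-parameter pencil compatible with a generating circle $G_{j+1}$ through the next prescribed pair $c_1(j+1), c_2(j+1)$, and that this in turn uniquely fixes $s_{j+1}$. I expect this is best verified in hexaspherical coordinates, exploiting the $(2,1)$-plane description of the Lie cyclide congruence from Proposition~\ref{channel_curv_spheres}, and it will require checking that the two discrete Ribaucour-branch choices $r_1, r_2$ at successive edges can be selected consistently, so that no hidden discrete degree of freedom slips into the construction beyond the three initial parameters.
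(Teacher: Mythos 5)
Your overall architecture matches the paper's: both proofs reduce to the reconstruction of Theorem~\ref{sphere_curve} and locate the three parameters as a two-parameter initial choice plus a one-parameter initial choice. But the step you yourself flag as ``the principal obstacle'' --- the rigidity of the propagation --- is exactly where the proof lives, and the mechanism you propose for it does not work as stated. You claim that conditions (i) and (ii) of Theorem~\ref{sphere_curve} determine $s_1$ from $s_0$ and $\sigma_{01}$. Condition (i), however, reads $s_1\in\langle\sigma_{01},\sigma_{12}\rangle$ and so presupposes $\sigma_{12}$, which in your inductive scheme is only produced after $G_1$, which is only produced after $s_1$; and condition (ii) is a single scalar equation on the two-parameter family of spheres through $c_1(1)$ and $c_2(1)$, so by itself it leaves a one-parameter residual freedom. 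As written, the induction is therefore circular and underdetermined, and the count of exactly three parameters is not established.

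The paper closes this gap with a different propagation order. The initial two-parameter choice is a contact element at one initial vertex; the Legendre condition propagates it uniquely along $c_1$ and along $c_2$ (at each step the new contact element is forced to contain the unique sphere of the previous contact element passing through the next vertex), and the discrete Ribaucour hypothesis guarantees that corresponding contact elements on $c_1$ and $c_2$ again satisfy the Legendre condition, so one obtains a full coordinate ribbon of a discrete Legendre map. This already pins down every enveloped curvature sphere $s_j$ as the common sphere of the two contact elements at $c_1(j)$ and $c_2(j)$, with no reference to face-spheres. Only then is the remaining one-parameter choice made: a face-cyclide on an initial face, which propagates uniquely along the ribbon by the requirement that consecutive face-cyclides share a curvature circle; the generating circles and face-spheres fall out of this congruence. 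If you want to keep your formulation, replace your rule for producing $s_{j+1}$ by this Legendre propagation (so that all $s_j$ are determined by $s_0$ alone) and then set $G_{j+1}:=s_{j+1}\cap\sigma_{j(j+1)}$; with that amendment your scheme becomes a reparametrization of the paper's argument. A secondary point: your appeal to Lemmas~\ref{two_rib_corr} and~\ref{rib_circular} to conclude that the prescribed vertices sit on a common smooth Ribaucour branch between $G_j$ and $G_{j+1}$ uses concircularity as if it were sufficient, whereas it is only necessary; in the paper this compatibility comes for free because the face-cyclide shares its contact elements with the Legendre ribbon at all four vertices of the face.
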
 
\begin{proof}
Suppose that the discrete curves $c_1$ and $c_2$ form a Ribaucour pair. To obtain the sought-after discrete channel surfaces we construct suitable Lie cyclide congruences, which then, by Theorem \ref{sphere_curve}, determine discrete channel surfaces. These are unique up to subdivision in the circular direction.

To do so we choose a contact element for an initial vertex on one of the curves $c_1$ and $c_2$, which leaves us with a 2-parameter choice. Since the curves $c_1$ and $c_2$ are related by a discrete Ribaucour transformation, this contact element can be consistently extended on all vertices of $c_1$ and $c_2$ such that we obtain one coordinate ribbon of a discrete Legendre map. Then, for each face of this coordinate ribbon there exists a 1-parameter family of face-cyclides.

If we fix a face-cyclide on one initial face, then there exists a unique choice of face-cyclides on the other faces of the coordinate ribbon such that two consecutive face-cyclides intersect along a common curvature line. By construction, this yields a Lie cyclide congruence for a discrete channel surface which has as generating circles these common curvature lines as shown in the proof of Theorem \ref{sphere_curve}.
\end{proof}
\noindent  \ \\To demonstrate the difference between the smooth and the discrete theories we explicitly construct all channel surfaces with two parallel lines as prescribed curvature lines in the non-circular direction:
\begin{example} \emph{Suppose we prescribe two smooth parallel lines, then the 1-parameter family of smooth channel surfaces with these two lines as curvature lines is given by cylinders with varying radii. 
\\\hspace*{0.5cm}If we prescribe a Ribaucour pair of parallel discrete lines related by a smooth Ribaucour transformation, then there exists a 3-parameter family of discrete channel surfaces with the parallel lines as curvature lines in the non-circular direction.} 
\end{example}
\noindent \begin{minipage}{7.5cm}
To obtain these discrete channel surfaces we follow the construction given in the proof of Theorem \ref{prescribed} and construct suitable Lie cyclide congruences: if we choose a cylinder with the two parallel lines as curvature lines as a (constant) Lie cyclide, we obtain discrete cylindrical surfaces. However, as demonstrated in Figure 6, there exist non-cylindrical discrete channel surfaces with the two discrete lines as curvature lines: here the Lie cyclides are given by parts of tori of revolution.
\end{minipage}
\begin{minipage}{5cm}
\hspace*{0.7cm}\includegraphics[scale=0.3]{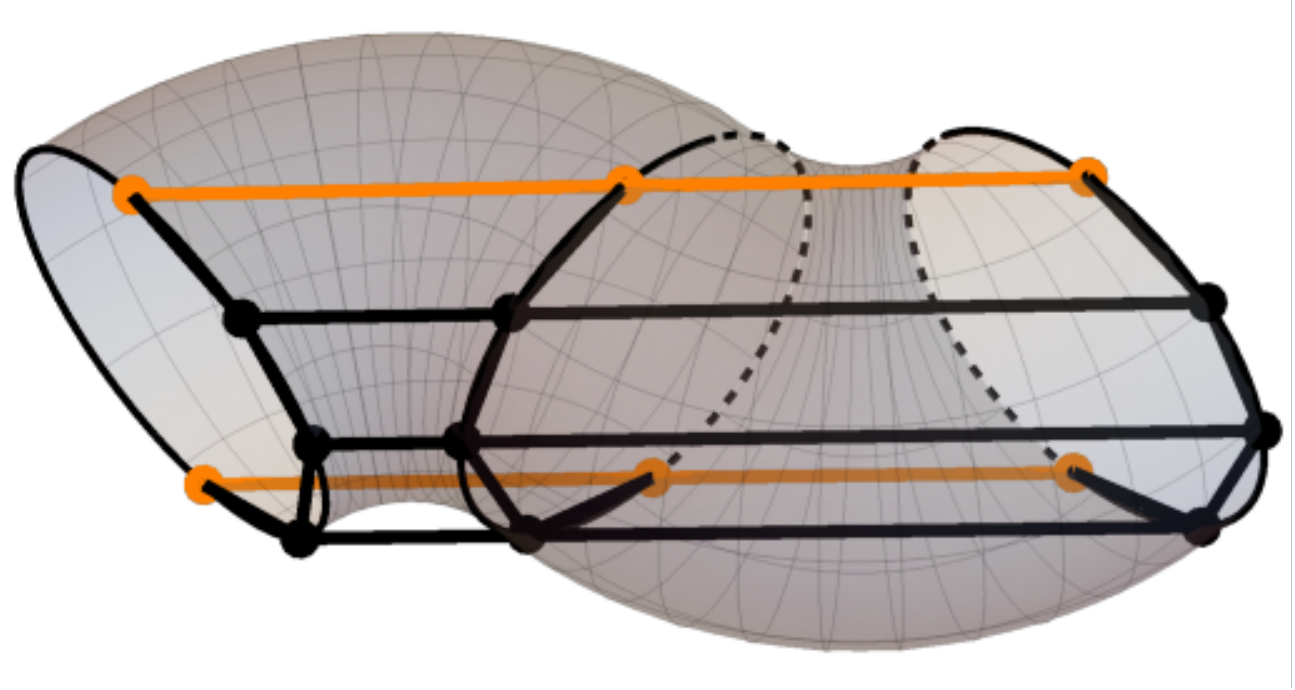}
\par
\begingroup
\leftskip=0.5cm 
\noindent \textbf{Fig 6.} A non-cylindrical discrete channel surface with two parallel lines (orange) as prescribed curvature lines.
\par
\endgroup
\end{minipage}
%
%
\ \\ \section{Discrete isothermic channel surfaces}
\noindent In this section, as an application of our notion of a discrete channel surface, we present a classification of discrete isothermic channel surfaces, where familiar subclasses of channel surfaces in Euclidean subgeometries will make an appearance.
\\\\Recall (see \cite{vessiot} and \cite[3.7]{uhj_book}) that smooth isothermic channel surfaces in M\"obius geometry are parts of surfaces of revolution, cylinders or cones in a suitably chosen Euclidean subgeometry.

To begin with we define discrete counterparts of these surfaces and then prove that these are exactly the discrete isothermic channel surfaces.

\subsection{Discrete surfaces of revolution, cylinders and cones}
We consider discrete Legendre maps in a Euclidean subgeometry and define these three types of discrete channel surfaces by specifying their Lie cyclide congruences:
\begin{definition}
A discrete channel surface is a $\left\{
\begin{tabular}{@{}l@{}}
    \emph{discrete surface of revolution} \\
    \emph{discrete cylinder} \\
    \emph{discrete cone}
\end{tabular}
\right\}$
\\[6pt]if it admits a Lie cyclide congruence of $\left\{
\begin{tabular}{@{}l@{}}
    tori of revolution having the same axis \\
    cylinders having parallel axes of revolution \\
    circular cones with the same vertex
\end{tabular}
\right\}$.
\end{definition}
\noindent \ \\ \ \\As a consequence of the definition, we obtain characterizations in terms of the related face-sphere congruences of the discrete channel surfaces:
\begin{proposition}\label{prop_subclass_face_sphere}
A discrete channel surface is a $\left\{
\begin{tabular}{@{}l@{}}
    surface of revolution \\
    cylinder \\
    cone
\end{tabular}
\right\}$ if and \\[6pt] only if its face-spheres are $\left\{
\begin{tabular}{@{}l@{}}
    orthogonally intersected by a fixed line \\
    planes which intersect a fixed plane orthogonally \\
    planes which all intersect at a fixed point
\end{tabular}
\right\}$.
\end{proposition}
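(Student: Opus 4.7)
The plan is to prove all three equivalences in parallel, exploiting that in each case a single ``axis object'' (a line $a$, a direction $d$, or a point $V$) governs both the Lie cyclide congruence and the face-sphere congruence. For each subclass I argue both directions separately, using the uniqueness of the Lie cyclide congruence (Corollary~\ref{unique}) and the sphere-curve reconstruction of Theorem~\ref{sphere_curve}.

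For the forward implication I assume the discrete channel surface admits the prescribed Lie cyclide congruence. Within each circular-direction coordinate ribbon the Lie cyclide is constant, so the two bordering generating circles are two curvature lines of this common Dupin cyclide; in the three cases these are parallels of $a$, rulings parallel to $d$, and rulings through $V$, respectively. A direct computation then identifies the sphere containing both curvature lines: two parallels of $a$ on a torus of revolution lie on a sphere centered on $a$ (by rotational symmetry); two parallel lines span a plane parallel to $d$, which meets the plane perpendicular to $d$ orthogonally; and two lines through $V$ span a plane through $V$.

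For the converse I assume the face-spheres have the prescribed form. An interior generating circle $c$ borders two adjacent circular coordinate ribbons and hence lies on two face-spheres $\sigma_1,\sigma_2$; computing $\sigma_1\cap\sigma_2$ identifies $c$ respectively as a parallel of $a$, a line parallel to $d$, or a line through $V$. Furthermore, by condition~(i) of Theorem~\ref{sphere_curve}, the enveloped curvature sphere $s$ along $c$ lies in the elliptic pencil $\langle\sigma_1,\sigma_2\rangle$, which is precisely the pencil of spheres containing $c$; hence $s$ is centered on $a$, is a plane containing $d$, or is a plane through $V$, in the three respective cases. Both families of data entering the Lie cyclide construction therefore share the symmetry of the axis object.

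Finally I invoke the explicit normal-field construction of the Lie cyclide between two adjacent generating circles and their corresponding curvature spheres given in the proof of Theorem~\ref{sphere_curve}. Since the input data is invariant under the relevant symmetry -- rotations about $a$, translations along $d$, or incidence with $V$, respectively -- the resulting (uniquely determined) Dupin cyclide inherits the same structure, forcing it to be a torus of revolution with axis $a$, a cylinder of revolution with axis parallel to $d$, or a circular cone with vertex $V$. The main technical step will be this symmetry-to-classification transfer; the cone case in particular requires recognising a Dupin cyclide through $V$ whose two curvature lines meet at $V$ as a cone with vertex $V$, which I expect to handle most cleanly via the Lie-geometric $(2,1)$-splitting description of Dupin cyclides recalled in the preliminaries.
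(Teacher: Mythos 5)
The paper gives no proof of Proposition~\ref{prop_subclass_face_sphere}: it is presented as an immediate consequence of the definition together with the remark after Proposition~\ref{channel_rib_induced} that a face-sphere is exactly the sphere containing two adjacent generating circles, i.e.\ the common curvature sphere of two adjacent curvature lines of the constant Lie cyclide of a ribbon. Your forward direction is precisely that implicit argument made explicit (two coaxial parallels of a torus of revolution about $a$ lie on a sphere centred on $a$, two rulings of a cylinder with axis direction $d$ span a plane containing $d$, two rulings through $V$ span a plane through $V$), and your converse is a genuine addition. Its skeleton is sound: $c_j=\sigma_{ij}\cap\sigma_{jk}$ recovers the generating circles, $s_j\in\langle\sigma_{ij},\sigma_{jk}\rangle$ places the enveloped sphere in the pencil of spheres through $c_j$ (this holds for any discrete channel surface, since $s_j$ contains $c_j$ as a curvature line of the two adjacent Lie cyclides), and by the construction in the proof of Theorem~\ref{sphere_curve} together with Corollary~\ref{unique} the Lie cyclide of a ribbon is indeed uniquely determined by the data $(c_i,s_i,c_j,s_j)$.

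The step you have not actually supplied is the announced ``symmetry-to-classification transfer'', and as you yourself note the cone case is where your formulation breaks: ``incidence with $V$'' is not a symmetry, so no invariance argument can be run from it as stated. The repair is to use a genuine one-parameter group of M\"obius transformations in each case: rotations about $a$, translations along $d$, and \emph{homotheties centred at $V$}. Each of these fixes every item of the data $(c_i,s_i,c_j,s_j)$, hence by uniqueness fixes the Lie cyclide; a Dupin cyclide invariant under all dilations about $V$ is a circular cone with vertex $V$, one invariant under all translations along $d$ is a circular cylinder with rulings parallel to $d$, and one invariant under all rotations about $a$ is a Dupin cyclide of revolution with axis $a$. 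In the last case you must either read ``torus of revolution'' Lie-geometrically (so as to include the cone- and cylinder-of-revolution limits) or add an argument excluding them; and your converse only sees interior generating circles, since a boundary circle lies on a single face-sphere --- harmless, but worth a sentence. With those points addressed the proof closes.
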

\noindent \\Note that the notion of a discrete surface of revolution given here coincides with the standard definition in Euclidean geometry:
\begin{corollary}\label{geometry_surf_revolution}
A discrete channel surface is a surface of revolution if it is generated by revolving a discrete planar curve about an axis coplanar with the curve.
\end{corollary}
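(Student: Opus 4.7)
\medskip
\noindent\textbf{Proof proposal.} The plan is to reduce the corollary to Proposition \ref{prop_subclass_face_sphere}: it suffices to show that every face-sphere of $f$ is orthogonally intersected by the axis of revolution $a$, since the first clause of that proposition then identifies $f$ as a discrete surface of revolution.

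First I would identify the generating circles of $f$. Under the revolution of a planar discrete curve $\mathfrak{c}$ in a half-plane $P$ bounded by the axis $a$, every face of $f$ arises from a single edge of $\mathfrak{c}$ under two consecutive rotations $R_{\theta_j}$ and $R_{\theta_{j+1}}$ about $a$. Taking the circular coordinate direction to be the one along these rotations, the circular coordinate lines of $f$ are the orbits $\{R_{\theta_j}(\mathfrak{c}_i)\}_j$ of the individual vertices of $\mathfrak{c}$: each such orbit is a discrete sample of a smooth circle lying in a plane perpendicular to $a$ and centred on $a$. By Theorem \ref{circ_curv_lines} these smooth orbit circles are the generating circles of $f$.

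Next I would identify the face-spheres. By Proposition \ref{channel_rib_induced} the face-sphere between two adjacent generating circles is the (generically unique) common sphere containing both. For two orbit circles at different levels along $a$, any common sphere must be invariant under all rotations about $a$ -- otherwise a rotated copy would give a second common sphere, contradicting uniqueness for two cospherical circles in generic position -- and hence the face-sphere is centred on $a$. Consequently $a$ passes through the centre of every face-sphere and orthogonally intersects it.

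Proposition \ref{prop_subclass_face_sphere} now concludes that $f$ is a discrete surface of revolution. The main obstacle to formalising this plan is the identification step: establishing that the smooth generating circles supplied by Theorem \ref{circ_curv_lines} are precisely the vertex-orbits (and not some other circles that happen to pass through the same vertices). Once this is justified by the rotational symmetry of the construction of $f$, the remainder of the argument is forced by that symmetry together with the uniqueness of the common sphere through two cospherical circles.
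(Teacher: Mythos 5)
Your argument is essentially correct, but it takes a genuinely different route from the paper's, and in fact the two proofs address opposite directions of the intended equivalence. The paper's own proof is a two-line argument that starts from the Lie-geometric \emph{definition} (Lie cyclides are coaxial tori of revolution) and deduces the Euclidean generation property: since each non-circular discrete curvature line consists of vertices lying on a smooth meridian curvature line of a torus, and all tori share one axis, the net is swept out by revolving a planar profile curve. You instead prove the direction literally asserted by the ``if'' in the statement -- Euclidean generation implies the Lie-geometric notion -- by routing through Proposition \ref{prop_subclass_face_sphere}: you identify the generating circles with the rotation orbits of the profile vertices, observe that two distinct coaxial circles admit a unique common sphere so that rotational invariance forces each face-sphere to be centred on the axis, and conclude that the axis meets every face-sphere orthogonally. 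This buys a proof of the converse implication that the paper's argument does not supply (the surrounding text claims the two notions ``coincide'', so both directions are wanted), at the cost of two identification steps you correctly flag: (a) that the circular direction of the given channel-surface structure is the orbit direction rather than the profile direction (this follows from Proposition \ref{channel_curv_spheres}, since the curvature spheres along each orbit are the constant spheres of revolution touching the net there), and (b) that the generating circles of Theorem \ref{circ_curv_lines} are the orbit circles themselves, which holds because a generating circle must pass through all vertices of its circular coordinate line and three such vertices already determine the orbit circle. With those two points made explicit, your reduction to the face-sphere characterization is complete and is arguably the more informative half of the equivalence.
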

\begin{proof}
By definition, the Lie cyclide congruence of a discrete surface of revolution is given by tori of revolution having the same axis. Since a non-circular discrete curvature line consists of adjacent vertices that lie on a smooth curvature line of the torus and all tori have the same axis, the claim is proven.
\end{proof}
%
%
\noindent Furthermore, the curvature lines of discrete cylinders and discrete cones also share analogous properties with their smooth counterparts: 
\begin{corollary}\label{geometry_cylinder}
A discrete cylinder is generated by an arbitrary discrete planar curve, which is translated orthogonally to the plane of the profile curve.  

The vertices of any non-circular curvature line of a discrete cone lie on a sphere with the vertex of the cone as center.
\end{corollary}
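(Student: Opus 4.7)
The strategy will follow the pattern of the proof of Corollary \ref{geometry_surf_revolution}: the plan is to show that a non-circular discrete curvature line samples cross-sectional circles of the underlying Lie cyclides, and then to extract the claimed invariant -- planarity perpendicular to a common direction for the cylinder, and common distance from the vertex $V$ for the cone -- by a shared-vertex transitivity argument along the curvature line.

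The key first step, common to both cases, is to identify the non-circular direction of the discrete channel surface with the cross-sectional curvature direction of the Lie cyclide cylinder or cone. By Proposition \ref{prop_subclass_face_sphere}, every face-sphere of a discrete cylinder is a plane intersecting a fixed plane orthogonally, and every face-sphere of a discrete cone is a plane through the common vertex $V$. Since a face-sphere is also a curvature sphere of the corresponding Lie cyclide, and since the curvature spheres of a cylinder (resp.\ of a cone) that are planes are precisely the tangent planes along its rulings (resp.\ along its rays through $V$), the generating circles contained in a face-sphere must be rulings (resp.\ rays through $V$). Hence the circular direction corresponds to the ruling or ray direction of the Lie cyclide, and the non-circular direction to the cross-sectional circle direction.

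For the cylinder, let $a$ denote the common axis direction of the Lie cyclide cylinders. Any '$-$'-edge of a non-circular discrete curvature line lies inside a single '$+$'-coordinate ribbon and hence on a single Lie cyclide; its two endpoints lie on one cross-sectional circle of that cylinder, and therefore in a plane perpendicular to $a$. Two consecutive '$-$'-edges of the line share a vertex, and the two cross-sectional circles on the two adjacent Lie cyclides through this shared vertex both lie in planes perpendicular to $a$ and both pass through the same vertex, so they lie in a common such plane. By induction along the coordinate line, the entire non-circular curvature line is planar, with its plane perpendicular to $a$. Finally, the two endpoints of any '$+$'-edge lie on a common ruling of the corresponding Lie cyclide, hence on a line parallel to $a$; consequently neighbouring non-circular curvature lines lie in parallel planes perpendicular to $a$ and are related by a translation in the direction of $a$. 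This gives the claimed generation of the discrete cylinder by translating a discrete planar profile orthogonally to its plane.

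For the cone, the same scheme applies with the invariant of a cross-sectional circle of a cone with vertex $V$ being the common distance of its points from $V$. The two endpoints of any '$-$'-edge of a non-circular discrete curvature line are therefore equidistant from $V$, and the shared-vertex transitivity argument propagates this equality, showing that all vertices of the line lie on a single sphere centred at $V$. The only delicate point of the entire argument is the preliminary identification of the non-circular direction via the face-sphere description of Proposition \ref{prop_subclass_face_sphere}; once that is settled, both assertions reduce to elementary propagation arguments, directly analogous to the surface of revolution case.
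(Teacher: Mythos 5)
Your overall strategy is exactly what the paper intends: its own proof of this corollary is the one-line remark that it ``follows directly from Propositions \ref{prop_subclass_face_sphere} and \ref{channel_rib_induced}'', and the elaboration it has in mind is precisely the pattern of the proof of Corollary \ref{geometry_surf_revolution} that you imitate --- the two endpoints of a non-circular edge lie on a smooth curvature line (a cross-sectional circle) of the Lie cyclide of the surrounding ribbon, and the common axis direction (resp.\ the common vertex $V$) lets you propagate the resulting invariant across the shared vertex of consecutive edges. Your propagation arguments for both the cylinder and the cone are correct, as is the observation that the endpoints of a '$+$'-edge lie on a common ruling, which upgrades ``parallel planes'' to the claimed translation.

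The one step that does not survive scrutiny as written is your justification of the identification of directions. A face-sphere is \emph{not} a curvature sphere of the Lie cyclide: by the remark following Proposition \ref{channel_rib_induced} it is the common curvature sphere of the two adjacent \emph{generating circles} viewed as degenerate Dupin cyclides, i.e.\ it is the sphere \emph{containing} both generating circles. A plane through two distinct parallel rulings of a circular cylinder is a secant plane of that cylinder, not a tangent plane, so the chain ``face-sphere $=$ curvature sphere of the Lie cyclide $\Rightarrow$ tangent plane along a ruling $\Rightarrow$ the generating circles are rulings'' breaks: a tangent plane touches along a single ruling, whereas the face-sphere must contain two distinct generating circles. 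The conclusion you want is nevertheless true, and the repair is short: by Proposition \ref{prop_subclass_face_sphere} each face-sphere is a plane containing two distinct curvature lines of the same family of the corresponding Lie cyclide, and two distinct cross-sectional circles of a circular cylinder (or cone) are never coplanar, so those two curvature lines must be rulings (resp.\ rays through $V$). With that substitution your argument is complete and coincides with the paper's intended proof.
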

\noindent The proof follows directly from Propositions \ref{prop_subclass_face_sphere} and \ref{channel_rib_induced}.
%
%
%
\subsection{Vessiot's Theorem} We will prove that the three just defined subclasses of discrete channel surfaces lead to a discrete version of Vessiot's characterization of smooth isothermic channel surfaces.

From now on we restrict to graphs $\mathcal{G}$ with vertices of degree $4$ and consider nowhere-spherical discrete surfaces, that is, discrete nets such that the nine vertices of any four adjacent faces are not cospherical.
\begin{theorem}[Vessiot's Theorem]\label{vessiot}
A nowhere-spherical discrete channel surface is isothermic if and only if it is a surface of revolution, a cylinder or a cone in a suitably chosen Euclidean subgeometry. 
\end{theorem}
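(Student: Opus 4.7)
The plan is to prove the equivalence in two directions, with sufficiency being direct and necessity the substantial direction.

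For sufficiency I would check each of the three classes explicitly. For a discrete cylinder, Corollary \ref{geometry_cylinder} describes it as a planar profile curve translated orthogonally to its plane, so each non-circular face is a planar rectangle whose cross-ratio factorises trivially in terms of Euclidean edge-length ratios. Since isothermicity is M\"{o}bius invariant and a discrete circular cone arises as the image of a suitable discrete cylinder under inversion centred at a point on the axis, this immediately handles the cone case. For a discrete surface of revolution, Corollary \ref{geometry_surf_revolution} provides the rotational generation by a planar profile curve; the non-circular faces differ only by a rotation angle about the common axis, and the cross-ratios factor in terms of these rotation angles and the planar profile edge-lengths.

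For necessity, suppose $f$ is a nowhere-spherical, isothermic discrete channel surface with circular direction '$+$'. By Proposition \ref{prop_subclass_face_sphere} it suffices to show that the face-sphere congruence $\sigma$ admits one of three types of common geometric element: a fixed line orthogonal to every face-sphere (rotation case), a fixed point of incidence for every face-sphere (cone case), or a fixed plane orthogonally intersecting every face-sphere (cylinder case). My strategy is to exploit the remark after Proposition \ref{channel_rib_induced}, which identifies $\sigma_{ij}$ as the common curvature sphere of the two adjacent generating circles $c_i, c_j$ viewed as (degenerate) Dupin cyclides, together with the rigidity provided by Proposition \ref{blending_circular}, i.e.\ the constant cross-ratio along non-circular curvature lines.

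The key step is to argue that the discrete isothermic cross-ratio condition on faces, combined with the Ribaucour pair structure of adjacent generating circles (Corollary \ref{cor_non_circ_rib} and Proposition \ref{channel_rib_induced}), forces the passage from one generating circle to the next in the face-sphere curve to arise from a single one-parameter group of M\"{o}bius transformations stabilising $\sigma$. The three cases of Proposition \ref{prop_subclass_face_sphere} then correspond bijectively to the three conjugacy classes of such one-parameter symmetry groups in the M\"{o}bius group preserving a non-spherical sphere curve --- those fixing a line, a point, or a point at infinity --- and the desired Euclidean subgeometry is chosen so as to normalise the respective fixed element, placing $f$ in one of the three asserted forms.

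The main obstacle is precisely the extraction of this one-parameter symmetry group from the combinatorial isothermic cross-ratio condition. I expect the argument to proceed by chaining together the Ribaucour-induced correspondences between consecutive generating circles, with the constant cross-ratio of Proposition \ref{blending_circular} supplying the compatibility between successive links, until one obtains a globally defined M\"{o}bius stabiliser of the face-sphere curve. The nowhere-spherical hypothesis is indispensable here: it rules out the degenerate case in which $f$ lies on a common sphere (where the stabiliser would not be one-dimensional and the classification would collapse), and thus guarantees that exactly one of the three cases in the conclusion occurs.
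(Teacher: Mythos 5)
There is a genuine gap in the necessity direction, and a concrete error in the sufficiency direction. For sufficiency, your shortcut for the cone case fails: a generalized cone is not the M\"obius image of a generalized cylinder. In the conformal compactification the generators of a cylinder are pairwise tangent circles (they meet only at $\infty$), whereas the generators of a cone meet in two points (the vertex and $\infty$), and these two configurations are not M\"obius equivalent; an inversion centred on the axis of a circular cylinder yields a singular Dupin cyclide with a single pinch point, not a cone. The cone case must be verified directly (its faces are planar trapezoids, so the cross-ratio still factorizes), or one can bypass cross-ratios entirely as the paper does: it first proves (Lemma~\ref{lemma_iso_circular}) that, for nowhere-spherical discrete channel surfaces, isothermicity is equivalent to concircularity of the four diagonal neighbours of every vertex, via the 5-point sphere criterion of \cite{5_point_iso}, and then observes that this concircularity holds for all three classes by symmetry (Corollaries~\ref{geometry_surf_revolution} and~\ref{geometry_cylinder}).

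For necessity, you reduce correctly to Proposition~\ref{prop_subclass_face_sphere}, but the central step --- extracting a one-parameter M\"obius group stabilising the face-sphere curve from the cross-ratio condition --- is left as an expectation, and the proposed mechanism is doubtful: a discrete surface of revolution with arbitrary profile and arbitrary sampling admits no continuous M\"obius symmetry, and consecutive latitude circles are distinct orbits of the rotation about the axis rather than images of one another under it, so the classification of one-parameter subgroups does not directly apply. The paper's mechanism is different and elementary: Lemma~\ref{lemma_iso_circular} upgrades the cosphericity of \emph{adjacent} generating circles (which every discrete channel surface has, by Proposition~\ref{channel_rib_induced}) to cosphericity of \emph{next-but-one} generating circles, so that any three consecutive generating circles $c_i$, $c_j$, $c_k$ are pairwise cospherical. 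A short case analysis on whether $c_i$ and $c_j$ are tangent, meet in two points, or are disjoint then produces, respectively, a fixed plane orthogonal to all face-spheres, a fixed common point of all face-spheres, or a fixed common axis --- exactly the three alternatives of Proposition~\ref{prop_subclass_face_sphere}. To complete your argument you would need to supply this (or an equivalent) deduction from the isothermic condition; as written, the passage from the cross-ratio hypothesis to the common geometric element is missing.
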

\noindent We will prove this theorem by elementary geometric arguments, using the 5-point sphere condition for isothermic nets introduced in \cite{5_point_iso}: a nowhere-spherical circular net is isothermic if and only if every vertex and its four diagonal neighbours lie in a common sphere, which does not contain the set of the other four neighbouring points of the vertex.
\\\\The proof of Vessiot's Theorem is based on the following observation: 
\begin{lemma}\label{lemma_iso_circular}
A nowhere-spherical discrete channel surface is isothermic if and only if the four diagonal neighbours of each vertex are concircular.
\end{lemma}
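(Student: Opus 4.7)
The strategy is to invoke the 5-point sphere characterization of discrete isothermic nets from \cite{5_point_iso}: a nowhere-spherical circular net is isothermic if and only if, at each vertex, the vertex together with its four diagonal neighbours lies on a common sphere not also containing all four of the other neighbouring points. I treat the two implications separately.

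\textbf{Sufficiency} ($\Leftarrow$). Assume the four diagonal neighbours of each vertex $v = v_{ij}$ are concircular, on a common circle $K_v$. The pencil of spheres through $K_v$ is one-dimensional, and contains a unique sphere $S_v$ through $v$ (degenerating to the plane of $K_v$ if $v$ lies in that plane). This $S_v$ realizes the 5-point sphere condition at $v$; the nowhere-spherical hypothesis precludes all four direct neighbours also lying on $S_v$. Hence the net is isothermic.

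\textbf{Necessity} ($\Rightarrow$). Suppose the channel surface is isothermic, with 5-point sphere $S$ at $v=v_{ij}$, and let $'+'$ denote the circular direction, so that $c_j$ is the generating circle through $v$. By Proposition~\ref{channel_rib_induced}, pairs of adjacent generating circles lie on common face-spheres, $c_{j-1}\cup c_j\subset\sigma_{j-1,j}$ and $c_j\cup c_{j+1}\subset\sigma_{j,j+1}$. The three points $v, v_{i-1,j+1}, v_{i+1,j+1}$ all lie on $S\cap\sigma_{j,j+1}$, forcing this intersection to be a circle $\gamma^+\subset S$ through $v$; analogously, $\gamma^- := S\cap\sigma_{j-1,j}$ is a circle through $v$ and the two diagonals $v_{i\pm 1, j-1}$. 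Since $\sigma_{j-1,j}\cap\sigma_{j,j+1} = c_j$, one has $\gamma^+\cap\gamma^-\subseteq S\cap c_j$. The nowhere-spherical hypothesis rules out $c_j\subset S$ (which would cascade via the face-spheres to place $c_{j\pm 1}$ and hence the direct neighbours $v_{i\pm 1, j}, v_{i, j\pm 1}$ on $S$, cospherifying the entire $3\times 3$ vertex-star); so $S\cap c_j$ consists either of $\{v\}$ or of $\{v,p\}$ for a second point $p$.

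To extract concircularity from this configuration I stereographically project $S$ from $v$: the circles $\gamma^\pm$ become two lines $\ell^\pm$ in the image plane (parallel in the tangential case $S\cap c_j=\{v\}$, meeting at the image $\tilde p$ of $p$ otherwise), and the four diagonals project to four points distributed two apiece on $\ell^\pm$. Concircularity of the diagonals on $S$ is equivalent to concyclicity of their four images, which in the generic case is a single power-of-a-point identity at $\ell^+\cap\ell^-$.

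This identity is supplied by the Ribaucour structure connecting the three generating circles: by Proposition~\ref{channel_rib_induced} the adjacent pairs $(c_{j-1},c_j)$ and $(c_j,c_{j+1})$ are smooth Ribaucour pairs, so Lemma~\ref{rib_circular}\,(ii) delivers concircularity of the quadruples $\{v_{i\pm 1, j-1}, v_{i\pm 1, j}\}$ and $\{v_{i\pm 1, j}, v_{i\pm 1, j+1}\}$ on circles $K^-$, $K^+$. Translating the power-of-a-point condition back through the stereographic projection, it becomes an equality of cross-ratios measured from $p\in c_j$ on $c_{j-1}$ and $c_{j+1}$ — precisely of the Vessiot-type invariant along the non-circular curvature lines (cf.\ Proposition~\ref{blending_circular}). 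The tangential case is handled by a limiting argument. Thus the four diagonal images are concyclic, and the four diagonals of $v$ on $S$ are concircular.

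\textbf{Main obstacle.} The difficult step is the last one: translating the power-of-a-point identity through the stereographic projection into the Vessiot-type cross-ratio carried by the channel surface, together with a clean treatment of the tangential degenerate case. Everything preceding it is bookkeeping of the face-sphere data established in Section~3.
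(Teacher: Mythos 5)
Your sufficiency direction is fine and is essentially the paper's: the circle through the four diagonal neighbours together with the vertex spans a $5$-point sphere, and nowhere-sphericity prevents it from swallowing the whole vertex star. The problem is the necessity direction, where the step you yourself flag as ``the difficult step'' is a genuine gap, and moreover cannot be closed in the way you indicate. You claim the power-of-a-point identity at $\ell^+\cap\ell^-$ is ``supplied by the Ribaucour structure connecting the three generating circles'' and reduces to the Vessiot-type cross-ratio invariant of Proposition \ref{blending_circular}. If that were so, the identity would hold for \emph{every} discrete channel surface, every such surface would have concircular diagonals, and by your own sufficiency argument every nowhere-spherical discrete channel surface would be isothermic --- contradicting Theorem \ref{vessiot}. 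So the identity must be extracted from the isothermicity hypothesis itself, and your sketch never does this: the circles $K^\pm$ involve the direct neighbours $v_{i\pm1,j}$, which need not lie on $S$, so they do not survive your stereographic projection of $S$, and Proposition \ref{blending_circular} concerns four points on a \emph{single} generating circle, not points spread over $c_{j-1},c_j,c_{j+1}$. As written, the argument does not go through.

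The ingredient you are missing is already in your own notation: $K^+$ and $K^-$ both pass through the two direct neighbours $v_{i-1,j}$ and $v_{i+1,j}$ on the generating circle $c_j$, hence are two intersecting circles and span a common sphere $\Sigma$ containing all four diagonal neighbours (plus $v_{i\pm1,j}$). This is how the paper concludes: if the four diagonals were \emph{not} concircular they would determine a unique sphere, forcing $\Sigma=S_5$; but then $S_5$ contains $v_{i\pm1,j}$ and $v$, hence the generating circle $c_j$, and via the face circles the entire $3\times3$ vertex star becomes cospherical, contradicting nowhere-sphericity. Equivalently, in your direct formulation: either $S\neq\Sigma$, in which case the four diagonals lie on the circle $S\cap\Sigma$ and you are done, or $S=\Sigma$, which nowhere-sphericity excludes. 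No stereographic projection, power of a point, cross-ratio computation, or separate treatment of the tangential case is needed; the whole of your final paragraph should be replaced by this one observation.
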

\noindent
\textit{Proof.} Let $\mathfrak{f}$ be a nowhere-spherical isothermic discrete channel surface with circular direction $'+'$. By way of contradiction, using the notation from Figure 7, we assume that the four diagonal neighbours $\mathfrak{f}_{i'}$, $\mathfrak{f}_{i''}$, $\mathfrak{f}_{k'}$ and $\mathfrak{f}_{k''}$ of any vertex $\mathfrak{f}_j$ are not concircular. Then it follows that these four vertices uniquely determine the 5-point sphere $S_5$, which guarantees isothermicity.

Furthermore, since by Corollary \ref{cor_non_circ_rib} any two non-circular curvature lines of a discrete channel surface form a Ribaucour pair, the vertices $\mathfrak{f}_{i'}$, $\mathfrak{f}_{j'}$, $\mathfrak{f}_{j''}$ and $\mathfrak{f}_{i''}$, as well as the vertices $\mathfrak{f}_{j'}$, $\mathfrak{f}_{k'}$, $\mathfrak{f}_{k''}$ and $\mathfrak{f}_{j''}$, determine two intersecting circles.
\\\begin{minipage}{7cm}
Thus, these six vertices lie on a common sphere. But this sphere coincides with the 5-point sphere $S_5$, which contradicts the assumption that $\mathfrak{f}$ is a nowhere-spherical isothermic net. In conclusion, the four diagonal neighbours of $\mathfrak{f}_{j}$ have to be concircular.
\\\\Conversely, if for any vertex $\mathfrak{f}_j$ the four diagonal neighbours are concircular, there exists a 5-point sphere containing these five vertices. If the patch is nowhere-spherical, it does not contain all of the vertices of the vertex-star of $\mathfrak{f}_j$. Therefore the discrete channel surface is isothermic. \hfill\qed
\end{minipage}
\begin{minipage}{1cm}
\ \ 
\end{minipage}
\begin{minipage}{4cm}
\hspace*{-0.2cm}\includegraphics[scale=0.45]{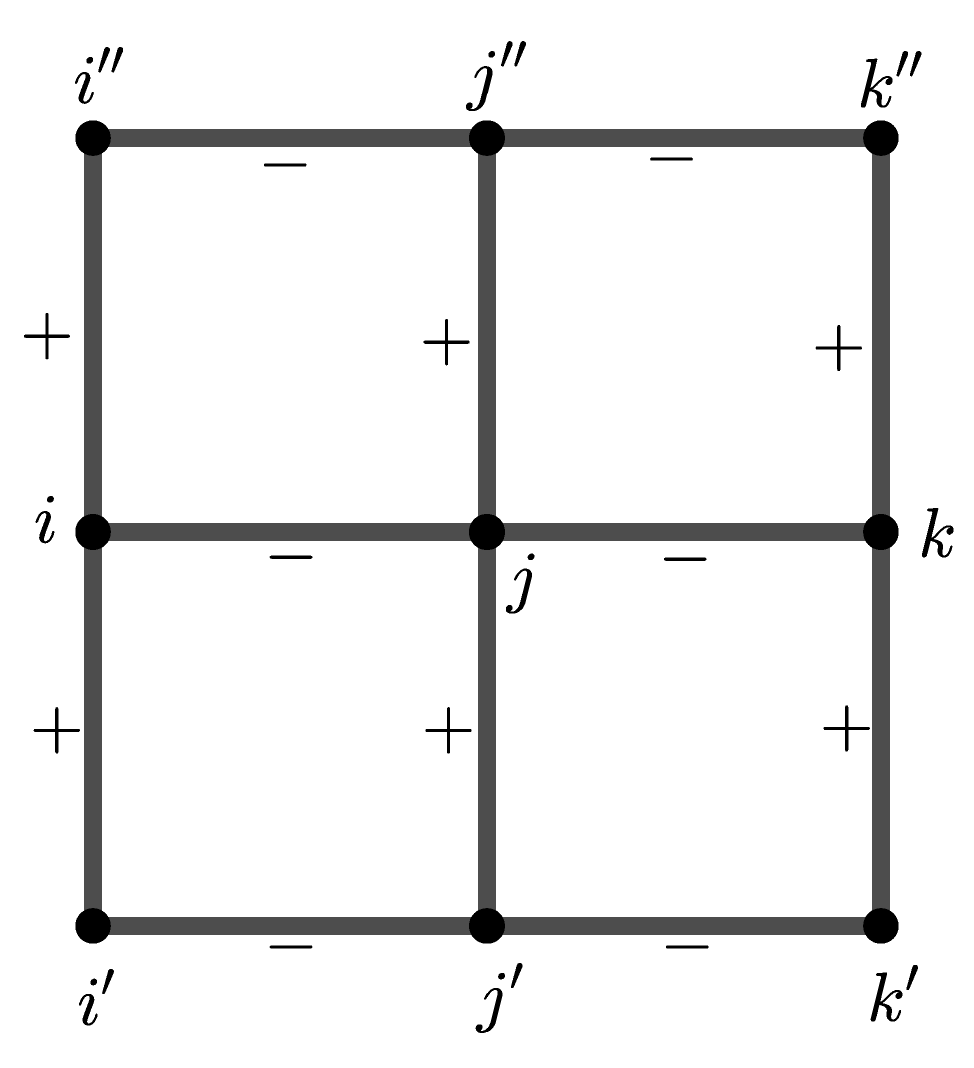}
\\\textbf{Fig 7.} Notation used for four neighbouring quadrilaterals
\end{minipage}
\\\\[6pt]
%
\\\noindent \textit{Proof of Theorem \ref{vessiot}.} Let $\mathfrak{f}$ be an isothermic discrete channel surface and denote three adjacent generating circles by $c_i, c_j$ and $c_k$. Since adjacent generating circles are related by a Ribaucour transformation, $c_i$ and $c_j$, as well as $c_j$ and $c_k$, are cospherical (see Lemma \ref{rib_circular}). Furthermore, by Lemma \ref{lemma_iso_circular}, we obtain additional circularity conditions between vertices on the circles  $c_i$ and $c_k$, which therefore also lie on a common sphere. We study the three possible configurations of $c_i$ and $c_j$:
\\[6pt](1) \ $c_i$ and $c_j$ are tangent. By a further M\"obius tranformation, the two generating circles become two parallel lines and therefore the corresponding face-sphere becomes a plane. Since $c_k$ is cospherical with each of the circles $c_i$ and $c_j$, it lies in a plane $E_i$ containing $c_i$ and in a plane $E_j$ containing $c_j$. Therefore $c_k=E_i \cap E_j$ is a line parallel to $c_i$ and $c_j$ and the two face-spheres are planes orthogonal to a fixed plane. Analogous arguments for consecutive generating circles show that in this case $\mathfrak{f}$ is M\"obius equivalent to a generalized cylinder.
\\[6pt](2) \ $c_i$ and $c_j$ intersect in two points. After a suitable M\"obius transformation we obtain two generating lines intersecting at a point $p_\infty$. Then $c_k$ is the line obtained as the intersection of a plane containing $c_i$ with a plane containing $c_j$. Therefore, in this case, we get face-spheres which determine a generalized cone.
\\[6pt](3) \ $c_i$ and $c_j$ do not intersect. From the cases (1) and (2), we then know that $c_k$ also does not intersect $c_i$ and $c_j$, hence we can consider $c_j$ as a line coplanar with each of the two circles $c_i$ and $c_k$. In this situation consider the ``equator plane'' of the sphere containing $c_i$ and $c_k$, which is orthogonal to this sphere and the line $c_j$: in this plane there exists a circle $\bar{c}$, which has as axis the line $c_j$, therefore intersects the planes of the circles $c_i$ and $c_k$ orthogonally, and at the same time orthogonally intersects the sphere containing the circles $c_i$ and $c_k$. If we transform this circle by a M\"obius transformation to a line such that the generating circles $c_i$, $c_j$ and $c_k$ become circles, then they have this line as common axis and lie in parallel planes. Hence, we obtain (a part of) a discrete surface of revolution. 
\\\\Conversely, if we have a discrete channel surface of one of the three types, we learn from Corollaries \ref{geometry_surf_revolution} and  \ref{geometry_cylinder} that, due to symmetry, the four diagonal neighbours of any vertex are concircular. Therefore, by Lemma \ref{lemma_iso_circular}, these surfaces are isothermic.   \qed
%
\noindent \\\\Using the classification of multi-circular nets recently given in \cite{multinet}, we learn from Vessiot's Theorem how these nets arise in the realm of discrete channel surfaces: 
\begin{corollary}
A discrete channel surface is a multi-circular net if and only if it is isothermic.
\end{corollary}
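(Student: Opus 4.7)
The plan is to prove the two implications separately, using Lemma \ref{lemma_iso_circular} for one direction and Vessiot's Theorem (Theorem \ref{vessiot}) for the other. The direction \emph{multi-circular implies isothermic} is immediate. A discrete channel surface is automatically a circular net -- its faces are circular by Corollary \ref{cor_non_circ_rib} -- and in the vertex-star around any interior vertex the four diagonal neighbours sit at the corners of a $2\times 2$ coordinate sub-rectangle of the grid. Multi-circularity thus forces these four diagonal neighbours to be concircular, and Lemma \ref{lemma_iso_circular} then delivers isothermicity.

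For the converse, I would invoke Vessiot's Theorem to reduce an isothermic discrete channel surface to a discrete surface of revolution, a discrete cylinder, or a discrete cone in a suitably chosen Euclidean subgeometry, and then verify multi-circularity directly in each case by exploiting the ambient Euclidean symmetry. For a discrete cylinder, Corollary \ref{geometry_cylinder} shows that a coordinate quadrilateral has one pair of opposite sides along a chord of the planar profile curve and the other pair along the orthogonal translation direction, so it is a plane rectangle and hence cyclic. For a discrete surface of revolution or a discrete cone, the key is reflection in the half-plane containing the rotation axis and bisecting the angular gap between the two ``meridians'' of the coordinate quadrilateral: this symmetry simultaneously swaps both pairs of vertices, which forces the four vertices into a common plane and endows the resulting quadrilateral with a line of symmetry perpendicular to the two parallel ``latitudinal'' chords. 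The quadrilateral is therefore an isosceles trapezoid and hence cyclic; combined with face-circularity, this yields multi-circularity.

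The main obstacle is to handle the three Euclidean subclasses produced by Vessiot's Theorem uniformly. The unifying feature is that each such surface carries a reflective symmetry which simultaneously swaps both pairs of opposite corners of any coordinate quadrilateral, so that coplanarity of the four vertices together with mirror symmetry make the resulting planar quadrilateral an isosceles trapezoid (a rectangle in the cylindrical case), and such quadrilaterals are automatically cyclic. Alternatively, once Vessiot's Theorem has pinned down the three Euclidean subclasses, one may invoke the classification of multi-circular nets from \cite{multinet} directly to conclude.
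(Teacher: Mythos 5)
Your proposal is correct and rests on the same two pillars as the paper, which states this corollary without a written proof, deriving it from Vessiot's Theorem~\ref{vessiot} together with the classification of multi-circular nets in \cite{multinet}. What you do differently is make both implications self-contained. For \emph{multi-circular $\Rightarrow$ isothermic} you note that the four diagonal neighbours of a vertex are the corners of a $2\times 2$ parameter rectangle and then apply Lemma~\ref{lemma_iso_circular}; this is more direct than routing through the classification, though it of course inherits the standing nowhere-spherical, degree-four hypotheses of this section. For the converse you replace the citation of \cite{multinet} by an elementary Euclidean verification in the three Vessiot subclasses, which is legitimate since concyclicity of four points is M\"obius invariant and so may be checked after the normalization provided by Theorem~\ref{vessiot}; in each case an arbitrary parameter rectangle has two parallel sides interchanged by a reflection, so its vertices are coplanar and form an isosceles trapezoid (a genuine rectangle for the cylinder), hence lie on a circle. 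Two small imprecisions, neither fatal: for the generalized cone there is no rotation axis, and the relevant mirror is the bisector plane of the two rulings through the apex, which need only be a symmetry of the four-point configuration rather than of the whole net; and the reflection exchanges the endpoint pairs of the two parallel sides, i.e.\ adjacent rather than opposite corners. Your approach buys a proof independent of the external classification, at the cost of a short case analysis that the paper delegates to \cite{multinet}.
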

%
%
\subsection{Examples}
Consider a discrete Legendre map $f$ with $\mathfrak{f}$ the projection to a spaceform determined by $\mathfrak{p}$ and $\mathfrak{q}$ and $\mathfrak{n}$ its unit normal, that is,
\begin{equation*}
f= \langle \mathfrak{f}, \mathfrak{n} \rangle \ \text{with }  \mathfrak{f} \perp \mathfrak{p}, \ (\mathfrak{f},\mathfrak{q})=-1, \ \mathfrak{n} \perp \mathfrak{q}, \ (\mathfrak{n}, \mathfrak{p})=-1.
\end{equation*}
Let $\wedge: \mathbb{R}^{4,2}\times \mathbb{R}^{4,2} \rightarrow \mathfrak{o}(4,2)$, $(x \wedge y)(v)=(x,v)y-(y,v)x$ and denote the mixed area of the quadrilaterals $a$ and $b$ with parallel corresponding edges by
\begin{equation*}
A(a,b)_{ijkl}:=\frac{1}{4}(da_{ik}\wedge db_{jl}+db_{ik}\wedge da_{jl}),
\end{equation*}
where $da_{ik}:= a_i - a_k$. Then, following \cite{lin_weingarten_discrete1}, we can define the Gauss and mean curvatures on faces of $\mathfrak{f}$ as
\begin{equation}\label{wayne_onestar}
K_{ijkl}=\frac{A(n,n)_{ijkl}}{A(\mathfrak{f},\mathfrak{f})_{ijkl}} \ \ \text{and} \ \ H_{ijkl}=-\frac{A(n,\mathfrak{f})_{ijkl}}{A(\mathfrak{f},\mathfrak{f})_{ijkl}}.
\end{equation}
\ \\We will only consider non-degenerate projections of discrete Legendre maps, that is, the mixed area $A(\mathfrak{f},\mathfrak{f})_{ijkl}$ is nowhere vanishing and with sufficiently large cell complexes.
\\\\Furthermore, we define principal curvatures $\kappa_{ij}$ on edges of $\mathfrak{f}$ using Rodrigues' equation
\begin{equation}\label{wayne_twostar}
d n_{ij}=-\kappa_{ij}d\mathfrak{f}_{ij}.
\end{equation}
\ \\We have the following corollary of Vessiot's Theorem \ref{vessiot}:
\begin{corollary}
Discrete isothermic channel surfaces in space forms that are not M\"obius equivalent to discrete surfaces of revolution in a Euclidean subgeometry are all M\"obius equivalent to flat surfaces in a Euclidean subgeometry.
\end{corollary}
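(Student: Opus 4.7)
The plan is to invoke Vessiot's Theorem (Theorem~\ref{vessiot}) to reduce to two explicit cases---a discrete cylinder or a discrete cone---and then to verify directly that the discrete Gauss curvature of \eqref{wayne_onestar} vanishes in each of those cases.

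By Vessiot's Theorem any isothermic discrete channel surface is, in some Euclidean subgeometry, a surface of revolution, a cylinder or a cone. The hypothesis of the corollary excludes the first possibility up to M\"obius equivalence; hence, after a suitable M\"obius transformation and choice of Euclidean subgeometry, the surface may be assumed to be either a discrete cylinder or a discrete cone. It therefore suffices to prove $K_{ijkl}=0$ on every face in both cases.

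The key geometric input is that in either case the non-circular coordinate direction of the discrete channel surface coincides with the ruling direction of the constant Lie cyclides: parallel translates of the planar profile curve in the cylinder case (by Corollary~\ref{geometry_cylinder}), and lines through the common apex in the cone case. Since the tangent plane of a smooth circular cylinder or circular cone is constant along each of its rulings, and the normalisation conditions $\mathfrak{n}\perp\mathfrak{q}$, $(\mathfrak{n},\mathfrak{p})=-1$ single out a lift whose only position-dependence is via $\langle n,\mathfrak{f}\rangle$ and thus vanishes along directions tangent to the surface, the lifted unit normal $\mathfrak{n}$ of the discrete net is constant along each non-circular coordinate line. In particular, for every face $(ijkl)$---where two adjacent vertices $i,j$ share one non-circular coordinate line and $k,l$ share the neighbouring one---we obtain $\mathfrak{n}_i=\mathfrak{n}_j$ and $\mathfrak{n}_k=\mathfrak{n}_l$.

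Using the definition of the mixed area and the antisymmetry of $\wedge$, this yields
\[
A(\mathfrak{n},\mathfrak{n})_{ijkl}
=\tfrac12\,d\mathfrak{n}_{ik}\wedge d\mathfrak{n}_{jl}
=\tfrac12\,(\mathfrak{n}_i-\mathfrak{n}_k)\wedge(\mathfrak{n}_i-\mathfrak{n}_k)=0,
\]
and hence $K_{ijkl}=0$ via \eqref{wayne_onestar} together with the standing non-degeneracy assumption $A(\mathfrak{f},\mathfrak{f})_{ijkl}\neq 0$. The main delicate point in this plan is arguing that the Lie sphere lift $\mathfrak{n}$---rather than merely the underlying Euclidean unit normal direction---is genuinely constant along a ruling; as indicated above, this is forced by the chosen normalisations and by the rulings being tangent to the surface. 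Identifying the non-circular coordinate direction with the ruling direction of the Lie cyclides is automatic from the definition of circular direction together with the fact that on a circular cylinder or circular cone the rulings form precisely the non-circular family of curvature lines.
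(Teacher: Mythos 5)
Your overall strategy is the same as the paper's: invoke Vessiot's Theorem to reduce to a generalized cylinder or cone in a Euclidean subgeometry and then show $A(\mathfrak{n},\mathfrak{n})_{ijkl}=0$. However, your key geometric claim has the two coordinate directions swapped, and as stated it is false. For a discrete cylinder or cone the rulings of the Lie cyclides are the \emph{generating circles}, i.e.\ the curvature lines in the \emph{circular} direction: by the proof of Theorem~\ref{vessiot} (cases (1) and (2)) the generating circles become parallel lines, respectively lines through a common apex, and by Corollary~\ref{geometry_cylinder} the \emph{non-circular} curvature lines are translates of an \emph{arbitrary} planar profile curve (resp.\ Ribaucour transforms of it lying on concentric spheres). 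Along such a profile curve the unit normal is certainly not constant, so your asserted equalities $\mathfrak{n}_i=\mathfrak{n}_j$ and $\mathfrak{n}_k=\mathfrak{n}_l$ for vertices adjacent along non-circular edges fail. What is true---and what the paper uses---is that $\mathfrak{n}$ is constant along each ruling, because the curvature sphere that is constant along a circular coordinate line is precisely the tangent plane $\langle\mathfrak{n}\rangle$, and the normalisations $(\mathfrak{n},\mathfrak{p})=-1$, $\mathfrak{n}\perp\mathfrak{q}$ fix its lift.

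The error is fortunately repairable without changing your computation in substance: if instead $\mathfrak{n}$ is constant along the circular edges $(jk)$ and $(li)$, then $d\mathfrak{n}_{ik}=\mathfrak{n}_i-\mathfrak{n}_k=\mathfrak{n}_l-\mathfrak{n}_j=-d\mathfrak{n}_{jl}$, so again
\begin{equation*}
A(\mathfrak{n},\mathfrak{n})_{ijkl}=\tfrac12\,d\mathfrak{n}_{ik}\wedge d\mathfrak{n}_{jl}=-\tfrac12\,d\mathfrak{n}_{jl}\wedge d\mathfrak{n}_{jl}=0 ,
\end{equation*}
and $K_{ijkl}=0$ follows as you say. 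With the directions corrected this is exactly the paper's (very brief) argument: ``all generating circles are straight lines with $\mathfrak{n}$ constant along those lines.'' You should also note, as the paper does, that the point at infinity forces $\mathfrak{q}$ to be null in the chosen Euclidean subgeometry, which is what makes ``flat'' mean Euclidean Gauss curvature zero here.
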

\begin{proof}
By Theorem \ref{vessiot}, an isothermic discrete channel surface, which is not M\"obius equivalent to a discrete surface of revolution is M\"obius equivalent to a generalized cylinder or cone in $\mathbb{R}^3$. Let that generalized cylinder or cone be $\mathfrak{f}$ above, with $\mathfrak{q}$ then necessarily null. Then, since all generating circles are straight lines with $\mathfrak{n}$ constant along those lines, we have $A(n,n)_{ijkl}\equiv 0$, implying the result.
\end{proof}
\noindent Moreover, we can characterize further well-known classes of isothermic surfaces:
\begin{proposition}
Discrete minimal and constant mean curvature channel surfaces in $\mathbb{R}^3$ are surfaces of revolution. 
\end{proposition}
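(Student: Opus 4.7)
The plan is to combine the channel property with the face-wise CMC condition to force the net to be isothermic, then invoke Vessiot's Theorem \ref{vessiot} to reduce the classification to three cases and eliminate the non-revolution ones.

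Taking $+$ as the circular direction, Proposition \ref{1family_curv} together with Rodrigues' equation \eqref{wayne_twostar} implies that the edge principal curvature $\kappa^+$ is constant along each $+$-coordinate line. First I would substitute \eqref{wayne_twostar} into the face formula \eqref{wayne_onestar}, expand $dn_{ik}$ and $dn_{jl}$ along the two paths through each face, and use the bilinearity of the mixed area (together with the circular quadrilateral identities of a principal net) to extract a linear identity relating $H_{ijkl}$ to the four edge principal curvatures $\kappa^+_{li}, \kappa^+_{jk}, \kappa^-_{ij}, \kappa^-_{kl}$. Constancy of $H$ combined with the channel constancy of $\kappa^+$ then imposes a rigid affine relation on the $-$-principal curvatures along each $+$-ribbon. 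Propagating this relation around any vertex-star of degree $4$, I would show that the four diagonal neighbours of every vertex lie on a common circle; by Lemma \ref{lemma_iso_circular} this establishes that the nowhere-spherical net $\mathfrak{f}$ is isothermic.

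By Vessiot's Theorem \ref{vessiot}, the discrete channel surface $\mathfrak{f}$ is then a discrete surface of revolution, cylinder, or cone in some Euclidean subgeometry. In the Euclidean structure fixed by $\mathfrak{p}$ and $\mathfrak{q}$, a discrete cone has Lie cyclides that are smooth cones with a common vertex; since the mean curvature of a smooth cone is non-constant, face-wise constancy of $H$ excludes the cone case. For a discrete cylinder, constant face-wise $H$ forces all cylindrical Lie cyclides to have a common radius, in which case their parallel axes degenerate to a single axis and the surface is a piece of a right circular cylinder, which itself fits the definition of a surface of revolution. Hence $\mathfrak{f}$ must be a surface of revolution, as claimed; the minimal case $H\equiv 0$ is handled identically.

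The principal obstacle is deriving the face-wise identity for $H$ in terms of the edge curvatures and extracting from it the concircularity of the four diagonal neighbours: isothermicity is a cross-ratio condition that does not follow mechanically from the constancy of $H$, so the argument must crucially use the channel constancy of $\kappa^+$ established in Proposition \ref{1family_curv}. An alternative route bypassing Vessiot would work directly with the centers of the face-spheres, showing via propagation of the $-$-curvature-sphere data across consecutive $+$-ribbons that these centers align on a common line (the axis of revolution) by Proposition \ref{prop_subclass_face_sphere}, but this appears technically more involved than the isothermic route.
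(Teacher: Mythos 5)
Your overall strategy --- force isothermicity, invoke Vessiot's Theorem, and then eliminate the cone and cylinder cases --- is not the paper's route, and it contains a genuine gap at its central step. You correctly identify the face-wise relation $(\kappa_{ij}-\kappa_{il}-\kappa_{jk}+\kappa_{kl})H=\kappa_{jk}\kappa_{li}-\kappa_{ij}\kappa_{kl}$ (which the paper simply cites from the mesh-parallelity curvature theory rather than re-deriving) and the constancy of $\kappa^+$ along $+$-coordinate lines. But the passage from the resulting ``rigid affine relation on the $-$-principal curvatures'' to the concircularity of the four diagonal neighbours of every vertex is asserted, not proved --- and you yourself flag it as the principal obstacle. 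Concircularity of diagonal neighbours is a positional, M\"obius-geometric condition on the vertices, whereas the relation you derive constrains only radii of curvature spheres; no mechanism is offered for converting one into the other. The subsequent case elimination is also shaky: Vessiot's Theorem only places the surface in one of the three classes \emph{in a suitably chosen Euclidean subgeometry}, which need not be the $\mathbb{R}^3$ in which $H$ is constant, and your exclusion of the cone case appeals to the mean curvature of the smooth Lie cyclide rather than to the discrete face-wise $H$ of the net --- these are different quantities.

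The paper's proof is shorter and avoids Vessiot entirely. Subtracting the face relation on two adjacent faces of a $+$-ribbon, using that $\kappa^+$ is constant along the ribbon, gives $(\kappa_i^--\kappa_k^-)(\kappa_j^-+H)=0$, whence at least three of the $-$-principal curvatures along each $+$-ribbon coincide. Since the Lie cyclide of that ribbon shares its principal curvatures edgewise with $\mathfrak{f}$, three of its curvature spheres in one family have equal radius, which forces the cyclide to be a torus of revolution whose axis is the common axis of the two generating circles; hence the Lie cyclide congruence consists of coaxial tori of revolution and the definition of a discrete surface of revolution is verified directly. If you wish to salvage your approach, this curvature computation is the missing ingredient: it is what actually pins down the Lie cyclides, after which isothermicity follows as a consequence (via Corollary \ref{geometry_surf_revolution} and Lemma \ref{lemma_iso_circular}) rather than serving as an intermediate step.
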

\begin{proof}
Suppose $\mathfrak{f}$ is a discrete constant mean curvature channel surface with circular direction $'+'$. By the equations $(\ref{wayne_onestar})$ and $(\ref{wayne_twostar})$, the mean curvature $H$ of a face $(ijkl)$ can be written in terms of the principal curvatures as (cf. \cite{curv_theory}) 
\begin{equation*}
(\kappa_{ij}-\kappa_{il}-\kappa_{jk}+\kappa_{kl})H=\kappa_{jk}\kappa_{li}-\kappa_{ij}\kappa_{kl}.
\end{equation*}
\begin{minipage}{7.7cm}
To prove the claim, we will show that the Lie cyclides of $\mathfrak{f}$ are tori of revolution with the same axis of revolution. Thus, we first investigate the principal curvatures on the $'-'$-edges along each $'+'$-coordinate ribbon. Since the principal curvatures along each $'+'$-coordinate line of the discrete channel surface are constant, by using the notation given in Figure 8, we obtain for two adjacent faces of the $'+'$-coordinate ribbon
\begin{align*}
(\kappa_i^-+\kappa_j^--C)H&=A-\kappa_i^-\kappa_j^-
\\(\kappa_j^-+\kappa_k^--C)H&=A-\kappa_j^-\kappa_k^-, 
\end{align*}
where $A:=\kappa_1^+\kappa_2^+$ and $C:=\kappa_1^++\kappa_2^+$.
\end{minipage}
\begin{minipage}{0.7cm}
\ \ 
\end{minipage}
\begin{minipage}{3.3cm}
\hspace*{0.3cm}\includegraphics[scale=0.4]{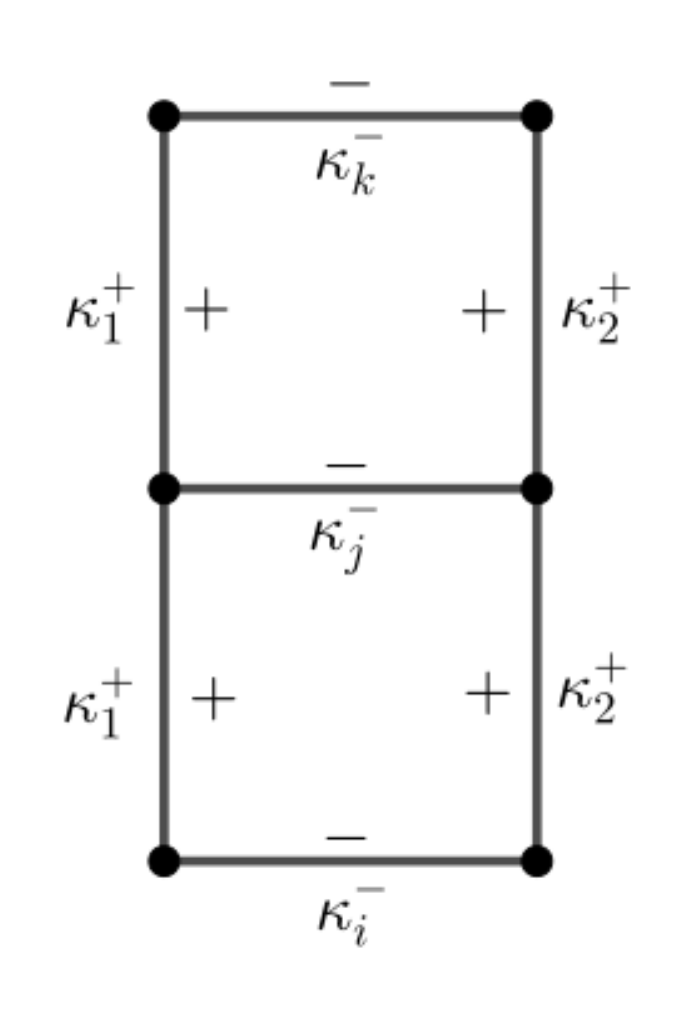}
\\\textbf{Fig 8.} Principal curvatures along a $'+'$-coordinate ribbon \end{minipage}
\\\\[6pt]Therefore, if the mean curvature is constant, we obtain $(\kappa_i^--\kappa_k^-)(\kappa_j^-+H)=0$ for any two adjacent faces of the coordinate ribbon. Thus, along a $'+'$-coordinate ribbon, for any two adjacent faces one of the conditions
\begin{equation*}
\kappa_i^-=\kappa_k^- \ \ \text{ or } \ \ \ H=-\kappa_j^-
\end{equation*}
holds and we deduce that at least three principal curvatures on the $'-'$-edges coincide.

Moreover, since $\mathfrak{f}$ is a discrete channel surface, for any $'+'$-coordinate ribbon there exists a Lie cyclide which, by construction, edgewise shares the principal curvatures with $\mathfrak{f}$. Thus, since at least three principal curvatures $\kappa_i^-$ along a $'+'$-coordinate ribbon coincide and the three curvature spheres of the Lie cyclide therefore have the same radius, the cyclide is a torus of revolution. The axis of this torus is the common axis of the two generating circles. 
\\\\In summary, in the case of a discrete constant mean curvature channel surface, the Lie cyclide congruence consists of tori of revolution having the same axis, and therefore it is, by definition, a discrete surface of revolution. 
\end{proof}
\noindent We remark that similar ideas lead to characterizations of discrete linear Weingarten channel surfaces in $\mathbb{R}^3$, as well as in space forms. A detailed analysis of these discrete surfaces will be left for a later project.
%
%
\ \\ 

\ \\ \ \\ \ \\\begin{minipage}{6.6cm}
\textbf{U.~Hertrich-Jeromin \& G.~Szewieczek}
\\TU Wien
\\Wiedner Hauptstra\ss e 8-10/104 
\\1040 Vienna, Austria
\\uhj@geometrie.tuwien.ac.at
\\gudrun@geometrie.tuwien.ac.at

\end{minipage}
\begin{minipage}{1cm}
\ \ 
\end{minipage}
\begin{minipage}{7cm}
\textbf{W.~Rossman}
\\Department of Mathematics 
\\Kobe University
\\Rokko, Kobe 657-8501, Japan
\\wayne@math.kobe-u.ac.jp
\\ \ \
\end{minipage}
\end{document}